\documentclass[oneside,12pt]{amsart}
\usepackage{latexsym}
\usepackage{amsthm}
\usepackage{amsmath}
\usepackage{amsfonts}
\usepackage{amssymb}

\usepackage{graphicx}
\thispagestyle{plain}

\input xy
\xyoption{all}
\SelectTips{cm}{}

\setlength{\textwidth}{16cm}
\setlength{\textheight}{21cm}
\setlength{\topmargin}{0cm}
\setlength{\oddsidemargin}{0cm}

\newtheorem{theo}{Theorem}[section]
\newtheorem{lemma}[theo]{Lemma}
\newtheorem{propo}[theo]{Proposition}

\newtheorem{coro}[theo]{Corollary}

\newcommand\Ho{\operatorname{\rm Ho}}

\newcommand\colim{\operatorname{colim}}

\newcommand\id{\operatorname{id}}

\newcommand\hocolim{\operatorname{hocolim}}
\newcommand\Id{\text{Id}}
\newcommand\sSets{\text{\bf sSets}}

\newcommand\semicoloc{\operatorname{scoloc}}
\newcommand\semiloc{\operatorname{sloc}}
\newcommand\coloc{\operatorname{coloc}}
\newcommand\loc{\operatorname{loc}}
\newcommand\sk{\operatorname{sk}}

\newcommand\Ord{\operatorname{Ord}}

\newcommand\cA{\mathcal {A}}

\newcommand\cC{\mathcal {C}}
\newcommand\cD{\mathcal {D}}

\newcommand\cK{\mathcal {K}}
\newcommand\cL{\mathcal {L}}
\newcommand\cS{\mathcal {S}}
\newcommand\cT{\mathcal {T}}
\newcommand\cX{\mathcal {X}}
\newcommand\cY{\mathcal {Y}}
\newcommand\Z{\mathbb {Z}}

\newcommand{\lsemiperp}{\mbox{\tiny\text{$\reflectbox{$\mathsf{L}$}$}}}
\newcommand{\rsemiperp}{\reflectbox{$\lsemiperp$}}

\newcommand{\boldperp}{\text{\boldmath$\perp$}}

\date{\today}

\numberwithin{equation}{section}

\begin{document}
\title[Are all localizing subcategories coreflective?]{Are all localizing subcategories
of stable \\ homotopy categories coreflective?}
\author[C. Casacuberta, J. J. Guti\'{e}rrez and J. Rosick\'{y}]
{Carles Casacuberta, Javier J. Guti\'{e}rrez  and Ji\v{r}\'{\i} Rosick\'{y}$^*$}
\thanks{ $^*$ The two first-named authors were supported by the Spanish Ministry of Education
and Science under MEC-FEDER grants MTM2007-63277 and MTM2010-15831, and by the Generalitat de Catalunya
as members of the team 2009~SGR~119.
The third-named author was supported by the Ministry of Education of the Czech Republic under the project
MSM~0021622409 and by the Czech Science Foundation under grant 201/11/0528. 
He gratefully acknowledges the hospitality of the University of Barcelona
and the Centre de Recerca Matem\`atica.}
\address{\newline C. Casacuberta\newline
Institut de Matem\`atica de la Universitat de Barcelona\newline
Gran Via 585, 08007 Barcelona, Spain\newline
carles.casacuberta@ub.edu
\newline \phantom{x} \newline
J. J. Guti\'{e}rrez\newline
Centre de Recerca Matem\`atica\newline
Apartat 50\newline
08193 Bellaterra, Spain\newline
and\newline
Departament d'\`Algebra i Geometria\newline
Universitat de Barcelona\newline
Gran Via 585, 08007 Barcelona, Spain\newline
jgutierrez@crm.cat
\newline \phantom{x} \newline
J. Rosick\'{y}\newline
Department of Mathematics and Statistics\newline
Masaryk University\newline
Kotl\'{a}\v{r}sk\'{a}~2, 600~00~Brno, Czech Republic\newline
rosicky@math.muni.cz
}
\begin{abstract}
We prove that, in a triangulated category with combinatorial models,
every localizing subcategory is coreflective and every
colocalizing subcategory is reflective if a certain large-cardinal axiom 
(Vop\v{e}nka's principle) is assumed true. It follows that,
under the same assumptions, orthogonality sets up a bijective correspondence 
between localizing subcategories and colocalizing subcategories.
The existence of such a bijection was left as an open problem 
by Hovey, Palmieri and Strickland in their axiomatic study of 
stable homotopy categories and also by Neeman in the context of well-generated 
triangulated categories. 
\end{abstract}
\subjclass[2010]{18E30, 18G55, 55P42, 55P60, 03E55}

\maketitle

\begin{center}
{\sc Introduction}
\end{center}

The main purpose of this article is to address
a question asked in \cite[p.~35]{HPS} of whether every localizing
subcategory (i.e., a full triangulated subcategory closed under coproducts) 
of a stable homotopy category $\cT$ is the kernel of a localization on $\cT$
(or, equivalently, the image of a colocalization).
We prove that the answer is affirmative if $\cT$ arises from a combinatorial model
category, assuming the truth of a large-cardinal axiom from set theory called Vop\v{e}nka's principle
\cite{AR},~\cite{J}. A~model category (in the sense of Quillen) is called \emph{combinatorial}
if it is cofibrantly generated \cite{Hi},~\cite{Ho1} and its underlying category
is locally presentable \cite{AR},~\cite{GU}. Many triangulated categories of interest
admit combinatorial models, including derived categories of rings and the homotopy
category of spectra. 

More precisely, we show that, if $\cK$ is a stable combinatorial 
model category, then every semilocalizing subcategory $\cC$
of the homotopy category $\Ho(\cK)$ is coreflective under Vop\v{e}nka's principle, and the coreflection is exact
if $\cC$ is localizing. We call $\cC$ \emph{semilocalizing} if it is closed
under coproducts, cofibres and extensions, but not necessarily under fibres.
Examples include kernels of nullifications in the sense of \cite{B1} or \cite{DF} on the homotopy
category of spectra.

We also prove that, under the same hypotheses, every semilocalizing subcategory
$\cC$ is \emph{singly generated}; that is, there is an object $A$ such that $\cC$ is the 
smallest semilocalizing subcategory containing~$A$. The same result is true for localizing subcategories. 
The question of whether every localizing subcategory is singly generated in a well-generated triangulated 
category was asked in \cite[Problem~7.2]{N2}. We note that, as shown in~\cite[Proposition~6.10]{R}, 
triangulated categories with combinatorial models are well generated.

In an arbitrary triangulated category~$\cT$, localizing
subcategories need neither be singly generated nor coreflective.
Indeed, the existence of a coreflection onto a localizing subcategory $\cC$ 
is equivalent to the existence of a right adjoint for the Verdier functor $\cT\to\cT/\cC$;
see \cite[Proposition~9.1.18]{N1}.
Hence, if a coreflection onto $\cC$ exists, then $\cT/\cC$ has small hom-sets.
This need not happen if no restriction is imposed on~$\cT$;
a~counterexample was given in~\cite{CN}.

Dually, we prove that every full subcategory $\cL$ closed under
products and fibres in a triangulated category
with locally presentable models is reflective under Vop\v{e}nka's principle. 
The reflection is semiexact
if $\cL$ is closed under extensions, and it is exact
if $\cL$ is colocalizing, as in the dual case.
However, we have not been able to prove that colocalizing (or semicolocalizing) subcategories 
are necessarily singly generated, not even under large-cardinal assumptions.

This apparent lack of symmetry is not entirely surprising, in view of some well-known
facts involving torsion theories.
In abelian categories, a full subcategory closed under colimits and extensions
is called a \emph{torsion class}, and one closed under limits and
extensions is called a \emph{torsion\nobreakdash-free class}. These are analogues
of semilocalizing and semicolocalizing subcategories of triangulated categories.
Torsion theories have also been considered in triangulated categories
by Beligiannis and Reiten in~\cite{BR}, in connection with $t$\nobreakdash-structures.
In well-powered abelian categories, 
torsion classes are necessarily coreflective and torsion-free classes are reflective~\cite{Di}. 
As shown in \cite{DH1} and~\cite{GS}, Vop\v{e}nka's principle implies that every torsion
class of abelian groups is singly generated. However, 
there exist torsion\nobreakdash-free classes that are not singly generated
in~ZFC; for example, the class of abelian groups whose countable subgroups are free
\cite[Theorem~5.4]{DG}. 
(In this article, we do not make a distinction between the
terms ``singly generated'' and ``singly cogenerated''.)

Our results imply that, if $\cT$ is the homotopy category of a stable
combinatorial model category and Vop\v{e}nka's principle holds, then there
is a bijection between localizing and colocalizing subcategories of~$\cT$,
given by orthogonality. This was asked in \cite[\S~6]{S} and in~\cite[Problem 7.3]{N2}.
In fact, we prove that there is a bijection between semilocalizing and semicolocalizing
subcategories as well, and each of those determines a $t$\nobreakdash-structure
in~$\cT$.

The lack of symmetry between reflections and coreflections
also shows up in the fact that singly generated semilocalizing subcategories
are coreflective (in ZFC) in triangulated categories with combinatorial models.
A detailed proof of this claim is given in Theorem~\ref{singlygen} below;
the argument goes back to Bousfield \cite{B3}, \cite{B2} in the case of spectra, and has subsequently
been adapted to other special cases in \cite{AJS1}, \cite{BR}, \cite{HPS}, \cite{KN},~\cite{LuDAGI}
---our version generalizes some of these.
However, we do not know if singly generated semicolocalizing subcategories 
can be shown to be reflective in~ZFC. A positive answer would imply the existence of
cohomological localizations of spectra, which is so far unsettled in~ZFC.

It remains of course to decide if Vop\v{e}nka's principle
(or any other large-cardinal principle) is really needed in order to answer all these questions.
Although we cannot ascertain this, we prove that
there is a full subcategory of the homotopy category of spectra closed under
retracts and products which fails to be weakly reflective, assuming that
there are no measurable cardinals. This follows from the existence of a full subcategory
of abelian groups with the same property, and hence solves
an open problem proposed in~\cite[p.~296]{AR}. In connection with this problem, see also~\cite{Prz}.

\section{Reflections and coreflections in triangulated categories}
\label{TMC}

In this first section we recall basic concepts and fix our terminology,
which is mostly standard, except for small discrepancies in the notation
for orthogonality and localization in a number of recent
articles and monographs about triangulated categories,
such as \cite{BR}, \cite{BIK}, \cite{HPS}, \cite{Kr}, \cite{N1} or~\cite{N2}.
The essentials of triangulated categories can be found in~\cite{N1}.

For a category $\cT$, we denote by $\cT(X,Y)$ the set of morphisms 
from $X$ to~$Y$. We tacitly assume that subcategories are isomorphism-closed,
and denote indistinctly a full subcategory and the class of its objects.

\subsection{Reflections and coreflections}
\label{RAC}

A full subcategory $\cL$ of a category $\cT$ is \emph{reflective} 
if the inclusion $\cL\hookrightarrow\cT$ has a left adjoint $\cT\to\cL$.
Then the composite $L\colon\cT\to\cT$ is called a \emph{reflection} onto~$\cL$.
Such a functor $L$ will be called a \emph{localization} 
and objects in $\cL$ will be called \emph{$L$\nobreakdash-local}.
There is a natural transformation $l\colon\Id\to L$ 
(namely, the unit of the adjunction) such that $Ll\colon L\to LL$ is
an isomorphism, $lL$ is equal to~$Ll$, and, for each $X$, the morphism $l_X\colon X\to LX$ is initial
in $\cT$ among morphisms from $X$ to objects in~$\cL$.

Similarly, a full subcategory $\cC$ of $\cT$ is \emph{coreflective} if the inclusion
$\cC\hookrightarrow\cT$ has a right adjoint.
The composite $C\colon\cT\to\cT$ is called a \emph{coreflection} 
or a \emph{colocalization} onto~$\cC$,
and it is equipped with a natural transformation
$c\colon C\to\Id$ (the counit of the adjunction) such that $Cc\colon CC\to C$
is an isomorphism, $cC$ is equal to~$Cc$, and $c_X\colon CX\to X$ is terminal
in~$\cT$, for each~$X$, among morphisms from objects in $\cC$
(which are called \emph{$C$\nobreakdash-colocal}) into~$X$.

A full subcategory $\cL$ of a category $\cT$ is called
\emph{weakly reflective} if for every object $X$ of $\cT$ there is a morphism $l_X\colon X\to X^*$
with $X^*$ in $\cL$ and such that the function
\[ \cT(l_X,Y)\colon\cT(X^*,Y)\longrightarrow\cT(X,Y) \]
is surjective for all objects $Y$ of~$\cL$. Thus, every morphism from $X$ to an object of $\cL$
factors through~$l_X$, not necessarily in a unique way. If such a factorization is unique
for all objects~$X$, then the morphisms $l_X\colon X\to X^*$ for all $X$ define together a reflection,
so $\cL$ is then reflective. One defines \emph{weakly coreflective} subcategories dually.

If a weakly reflective subcategory is closed under retracts, 
then it is closed under all products that exist in~$\cT$; see~\cite[Remark~4.5(3)]{AR}.
Dually, weakly coreflective subcategories closed under retracts are
closed under coproducts. Reflective subcategories are closed under limits, 
while coreflective subcategories are closed under colimits. 

If $L$ is a reflection on an additive category~$\cT$, then the objects $X$ such that
$LX=0$ are called \emph{$L$\nobreakdash-acyclic}.
The full subcategory of $L$\nobreakdash-acyclic objects is closed under 
colimits. For a coreflection~$C$, the class of objects $X$ such that 
$CX=0$ is closed under limits, and such objects are called \emph{$C$\nobreakdash-acyclic}.

\subsection{Closure properties in triangulated categories}
\label{LACS}

 From now on, we assume that $\cT$ is a triangulated category with products
and coproducts. Motivated by topology, we denote by $\Sigma$ the shift operator and call it \emph{suspension}.
Distinguished triangles in $\cT$ will simply be called \emph{triangles}
and will be denoted by
\begin{equation}
\label{triangle}
\xymatrix{
X \ar[r]^{u} & Y \ar[r]^{v} & Z \ar[r]^{w} & \Sigma X,
}
\end{equation}
or shortly by $(u,v,w)$. We say that a functor $F\colon\cT\to\cT$
\emph{preserves} a triangle (\ref{triangle}) if $(Fu,Fv,Fw)$ is a triangle.
Note that, if this happens, then $F\Sigma X\cong \Sigma FX$.

A full subcategory $\mathcal{S}$ of~$\cT$ will be called
\begin{itemize}
\item[(i)] \emph{closed under fibres} if $X$ is in $\cS$ for every triangle~(\ref{triangle})
where $Y$ and $Z$ are in~$\cS$;
\item[(ii)] \emph{closed under cofibres} if $Z$ is in $\cS$ for every triangle~(\ref{triangle})
where $X$ and $Y$ are in~$\cS$;
\item[(iii)] \emph{closed under extensions} if $Y$ is in $\cS$ for every triangle~(\ref{triangle})
where $X$ and $Z$ are in~$\cS$;
\item[(iv)] \emph{triangulated} if it is closed under fibres, cofibres and extensions.
\end{itemize}

\medskip

A full subcategory of $\cT$ is called \emph{localizing} if it is triangulated and
closed under coproducts, and \emph{colocalizing} if it is triangulated
and closed under products.
If a triangulated subcategory $\cS$ is closed under countable coproducts or under countable products, 
then $\cS$ is automatically closed under retracts; see \cite[Lemma~1.4.9]{HPS} or~\cite[Proposition~1.6.8]{N1}.

More generally, a full subcategory of $\cT$ will be called
\emph{semilocalizing} if it is closed under coproducts, cofibres, and extensions
(hence under retracts and suspension), but not necessarily under fibres.
And a full subcategory will be called \emph{semicolocalizing} if it is closed under products, fibres, 
and extensions (therefore under retracts and desuspension as well).
Semilocalizing subcategories are also called \emph{cocomplete pre-aisles} elsewhere \cite{AJS1}, \cite{Sta}, 
and semicolocalizing subcategories are called \emph{complete pre-coaisles}
---the terms ``aisle'' and ``coaisle'' originated in~\cite{KV}.
See also \cite{BR} for a related discussion of torsion pairs and $t$\nobreakdash-structures
in triangulated categories.

A reflection $L$ on $\cT$ will be called \emph{semiexact} if the subcategory
of $L$\nobreakdash-local objects is semicolocalizing, and \emph{exact} if it is colocalizing. 
Dually, a coreflection $C$ will be called
\emph{semiexact} if the subcategory of $C$\nobreakdash-colocal objects is semilocalizing
and \emph{exact} if it is localizing.

If $L$ is a semiexact reflection with unit~$l$, then, since the class
of $L$\nobreakdash-local objects is closed under desuspension, there is a natural morphism
$\nu_X\colon LX\to\Sigma^{-1}L\Sigma X$ 
such that $\nu_X\circ l_X=\Sigma^{-1}l_{\Sigma X}$ for all~$X$, 
and hence a natural morphism
\begin{equation}
\label{natural}
\Sigma\nu_X\colon\Sigma LX\longrightarrow L\Sigma X
\end{equation}
such that $\Sigma\nu_X\circ\Sigma l_X=l_{\Sigma X}$. As we next show, if $\Sigma LX\cong L\Sigma X$
for a given object~$X$, then $\Sigma\nu_X$ is automatically an isomorphism.

\begin{lemma}
\label{sigma}
Suppose that $L$ is a semiexact reflection.
If $\Sigma LX$ is $L$\nobreakdash-local for a given object~$X$,
then $\Sigma\nu_X$ is an isomorphism.
\end{lemma}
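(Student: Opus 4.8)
The plan is to use that the suspension $\Sigma$ is an autoequivalence of $\cT$ (with quasi-inverse $\Sigma^{-1}$) together with the hypothesis, valid since $L$ is semiexact, that the class of $L$\nobreakdash-local objects is closed under desuspension. Under these conditions I would show that, whenever $\Sigma LX$ is $L$\nobreakdash-local, the morphism $\Sigma l_X\colon\Sigma X\to\Sigma LX$ is itself a reflection of $\Sigma X$ onto the subcategory of $L$\nobreakdash-local objects. The conclusion then drops out of the uniqueness of reflections, since $l_{\Sigma X}$ is by definition also such a reflection.

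First I would verify the universal property of $\Sigma l_X$. Assuming $\Sigma LX$ is $L$\nobreakdash-local, let $W$ be an arbitrary $L$\nobreakdash-local object and $f\colon\Sigma X\to W$ a morphism. Applying $\Sigma^{-1}$ yields $\Sigma^{-1}f\colon X\to\Sigma^{-1}W$, where $\Sigma^{-1}W$ is again $L$\nobreakdash-local by closure under desuspension. The universal property of $l_X$ then produces a unique $h\colon LX\to\Sigma^{-1}W$ with $h\circ l_X=\Sigma^{-1}f$; suspending back, $\Sigma h\colon\Sigma LX\to W$ satisfies $\Sigma h\circ\Sigma l_X=f$, and uniqueness of $h$ transports to uniqueness of the factorization through $\Sigma l_X$. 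Hence $\Sigma l_X$ is initial among morphisms from $\Sigma X$ into $L$\nobreakdash-local objects, that is, a reflection.

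Finally, both $\Sigma l_X\colon\Sigma X\to\Sigma LX$ and $l_{\Sigma X}\colon\Sigma X\to L\Sigma X$ are reflections of $\Sigma X$ onto the same subcategory, so the unique morphism between them that commutes with the units is an isomorphism. Since $\Sigma\nu_X\circ\Sigma l_X=l_{\Sigma X}$ by construction, the map $\Sigma\nu_X$ is exactly this comparison isomorphism, which proves the lemma. The only point requiring care is the verification in the second paragraph; it rests entirely on the interplay between the autoequivalence $\Sigma$ and the closure of the local objects under desuspension, and uses no further triangulated structure. Everything after that is formal.
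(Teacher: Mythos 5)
Your proof is correct, and it takes a mildly but genuinely different route from the paper's. The paper argues by exhibiting an explicit inverse: since $\Sigma LX$ is $L$\nobreakdash-local, the universal property of $l_{\Sigma X}$ yields $h\colon L\Sigma X\to\Sigma LX$ with $h\circ l_{\Sigma X}=\Sigma l_X$, and then both composites are checked to be identities --- $\Sigma\nu_X\circ h=\id$ from the uniqueness clause for $l_{\Sigma X}$, and $h\circ\Sigma\nu_X=\id$ by desuspending the relation $h\circ\Sigma\nu_X\circ\Sigma l_X=\Sigma l_X$ and applying the uniqueness clause for $l_X$. You instead package these computations into two standard facts: that $\Sigma l_X$ is itself a reflection of $\Sigma X$ onto the $L$\nobreakdash-local objects (this is where the desuspension argument lives, in your second paragraph, and it needs exactly the two inputs you name: closure of the local class under $\Sigma^{-1}$ and the fact that $\Sigma$ is an autoequivalence, hence bijective on hom-sets), and that two reflections of the same object onto the same full subcategory are connected by a unique comparison isomorphism, which must be $\Sigma\nu_X$ since $\Sigma\nu_X\circ\Sigma l_X=l_{\Sigma X}$. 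The mathematical content coincides --- your verification of the universal property of $\Sigma l_X$ is precisely the paper's desuspension step, and the uniqueness-of-reflections argument is precisely the paper's two composite checks --- but your organization makes transparent that the lemma uses no triangulated structure at all, only formal properties of reflections and an equivalence preserving the local class in the stated way; the paper's version, in exchange, displays the inverse of $\Sigma\nu_X$ concretely as the comparison map $h$.
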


\begin{proof}
If $\Sigma LX$ is $L$\nobreakdash-local, then there is a (unique) morphism
$h\colon L\Sigma X\to\Sigma LX$ such that $h\circ l_{\Sigma X}=\Sigma l_X$.
Thus $\Sigma\nu_X\circ h\circ l_{\Sigma X}=l_{\Sigma X}$,
which implies that $\Sigma\nu_X\circ h={\rm id}$, by the universal property
of~$L$. Similarly, $h\circ\Sigma\nu_X\circ\Sigma l_X=\Sigma l_X$,
and hence $\Sigma^{-1}h\circ\nu_X\circ l_X=l_X$, from which it follows
that $\Sigma^{-1}h\circ\nu_X={\rm id}$, or $h\circ\Sigma\nu_X={\rm id}$.
This proves that $\Sigma\nu_X$ has indeed an inverse.
\end{proof}

\begin{theo}
\label{exactlocs}
Let $\cT$ be a triangulated category. For a semiexact reflection $L$ on~$\cT$,
the following assertions are equivalent:
\begin{itemize}
\item[{\rm (i)}] $L$ is exact.
\item[{\rm (ii)}] The class of $L$\nobreakdash-local objects is closed under $\Sigma$.
\item[{\rm (iii)}] $\Sigma LX\cong L\Sigma X$ for all~$X$.
\item[{\rm (iv)}]  $\Sigma\nu_X\colon\Sigma LX\to L\Sigma X$ is an isomorphism for all~$X$.
\item[{\rm (v)}] $L$ preserves all triangles.
\end{itemize}
\end{theo}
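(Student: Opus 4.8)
The plan is to prove the cyclic chain of implications
(i) $\Rightarrow$ (v) $\Rightarrow$ (ii) $\Rightarrow$ (iv) $\Rightarrow$ (iii) $\Rightarrow$ (i),
leaning throughout on two standing facts. First, an object $M$ is $L$\nobreakdash-local exactly when $l_M$ is an isomorphism, so that the universal property of $L$ makes $\cT(l_X,M)\colon\cT(LX,M)\to\cT(X,M)$ bijective for every $X$ whenever $M$ is local. Second, since $L$ is semiexact, the class $\cL$ of $L$\nobreakdash-local objects is already closed under products, fibres, extensions and desuspension, so the only closure property separating $L$ from being exact is closure under cofibres.

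Most arcs of the cycle are formal. For (ii) $\Rightarrow$ (iv): if $\cL$ is closed under $\Sigma$ then $\Sigma LX$ is $L$\nobreakdash-local, whence Lemma~\ref{sigma} makes $\Sigma\nu_X$ an isomorphism; and (iv) $\Rightarrow$ (iii) is immediate. For (iii) $\Rightarrow$ (i): every local $A$ satisfies $A\cong LA$, so $\Sigma A\cong\Sigma LA\cong L\Sigma A\in\cL$ and thus $\cL$ is closed under $\Sigma$; rotating a triangle $X\to Y\to Z\to\Sigma X$ with $X,Y\in\cL$ into $Y\to Z\to\Sigma X\to\Sigma Y$ and invoking closure under extensions then gives $Z\in\cL$, so $\cL$ is closed under cofibres and is colocalizing, i.e.\ $L$ is exact. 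For (v) $\Rightarrow$ (ii): applying $L$ to the distinguished triangle $A\to 0\to\Sigma A\to\Sigma A$ and using preservation forces $L\Sigma A\cong\Sigma LA$, so for local $A$ we get $\Sigma A\cong L\Sigma A\in\cL$.

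The substance lies in (i) $\Rightarrow$ (v), which I expect to be the main obstacle. Given a triangle $X\to Y\to Z\to\Sigma X$ with maps $u,v,w$, I complete $Lu$ to a distinguished triangle $LX\to LY\xrightarrow{a}W\xrightarrow{b}\Sigma LX$ and fill the commutative square determined by the unit $l$ into a morphism of triangles with components $l_X,l_Y,g,\Sigma l_X$, for some $g\colon Z\to W$. Since $L$ is exact, $\cL$ is triangulated, so the cofibre $W$ of a map between the local objects $LX$ and $LY$ is again local. The crux is to identify $W$ with $LZ$: I apply the cohomological functor $\cT(-,M)$, for an arbitrary local $M$, to both triangles and compare the resulting long exact sequences along the morphism above. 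Because $M$ is local, the four flanking comparison maps---those induced by $l_X$, $l_Y$ and their suspensions $\Sigma l_X$, $\Sigma l_Y$---are bijective; for the suspended maps one uses that $\cL$ is closed under $\Sigma$ (so $\Sigma LX$ is local) and that $\Sigma l_X$ differs from $l_{\Sigma X}$ by the isomorphism $\Sigma\nu_X$. The five lemma then shows $\cT(g,M)\colon\cT(W,M)\to\cT(Z,M)$ is bijective for every local $M$, which says precisely that $g\colon Z\to W$ is a reflection of $Z$ onto $\cL$. Hence the unique factorization $\bar g\colon LZ\to W$ of $g$ through $l_Z$ is an isomorphism, and transporting the triangle $(Lu,a,b)$ along it exhibits $(Lu,Lv,Lw)$ as a triangle, giving~(v).

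The delicate points in this final direction are the bookkeeping of the distinguished-triangle signs and the verification that, after identifying $W$ with $LZ$, the maps $a$ and $b$ genuinely coincide with $Lv$ and $Lw$ rather than being merely abstractly isomorphic to them; both follow from the uniqueness clause in the universal property of the reflection together with the naturality of $l$. Beyond this I anticipate needing only the completion axiom for morphisms of triangles, Lemma~\ref{sigma}, and the standing semiexactness hypothesis.
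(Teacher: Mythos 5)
Your proposal is correct and follows essentially the same route as the paper: the easy arcs use Lemma~\ref{sigma} exactly as in the text, and your (i) $\Rightarrow$ (v) is precisely the paper's (ii) $\Rightarrow$ (v) argument --- complete $Lu$ to a triangle, extend the unit square to a morphism of triangles, observe that the cofibre is $L$\nobreakdash-local, and apply the five lemma to $\cT(-,M)$ for local $M$ to identify that cofibre with $LZ$ and the remaining maps with $Lv$ and $Lw$ via the universal property. The only cosmetic differences are that you arrange the statements in a single cycle rather than the paper's hub around (ii), and you justify locality of the cofibre $W$ by closure under cofibres (exactness), where the paper, starting from (ii), views it as a fibre of $-\Sigma Lu$ between local objects and invokes semiexactness.
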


\begin{proof}
The equivalence between (i) and (ii) follows from the definitions.
The fact that (ii) $\Rightarrow$ (iv) is given by Lemma~\ref{sigma}, and
obviously (iv) $\Rightarrow$ (iii) $\Rightarrow$ (ii).

In order to prove that (ii) $\Rightarrow$ (v), let $(u,v,w)$ be a triangle,
and let $C$ be a cofibre of~$Lu$. Thus we can choose a morphism $\varphi$ yielding a commutative diagram
of triangles
\[
\xymatrix{
X\ar[r]^-{u}\ar[d]^{l_X} & Y\ar[r]^{v}\ar[d]^{l_Y} & Z\ar[r]^{w}\ar@{.>}[d]^{\varphi} & 
\Sigma X\ar[r]^{-\Sigma u}\ar[d]^{\Sigma l_X} & \Sigma Y\ar[d]^{\Sigma l_Y} \\
LX\ar[r]_{Lu} & LY\ar[r] & C\ar[r] & \Sigma LX\ar[r]_{-\Sigma Lu} & \Sigma LY\rlap{.}
}
\]
Since $C$ is a fibre of a morphism between $L$\nobreakdash-local objects, it is
itself $L$\nobreakdash-local, since $L$ is semiexact. From the five-lemma
it follows that the morphism $\cT(C,W)\to\cT(Z,W)$ induced by $\varphi$ is an isomorphism for
every $L$\nobreakdash-local object~$W$, and therefore $\varphi$ is an $L$\nobreakdash-localization,
so $C\cong LZ$. Then the induced morphisms $LY\to LZ$ and $LZ\to L\Sigma X$ (using $\Sigma\nu_X$) are equal to
$Lv$ and $Lw$ respectively, by the universal property of~$L$. This proves that $L$ preserves
$(u,v,w)$.

Finally, (v) $\Rightarrow$ (iii), so the argument is complete.
\end{proof}

There is of course a dual result for semiexact coreflections, with a similar proof.
We omit the details.

\begin{theo}
\label{semiexactlocs}
Let $\cT$ be a triangulated category. A reflection $L$ on $\cT$
is semiexact if and only if $L$ preserves triangles $X\to Y\to Z\to \Sigma X$
where $Z$ is $L$\nobreakdash-local, and a coreflection $C$ on $\cT$ is semiexact if and only if $C$ preserves triangles 
$X\to Y\to Z\to \Sigma X$ in which $X$ is $C$\nobreakdash-colocal.
\end{theo}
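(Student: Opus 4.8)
The plan is to prove the statement for the reflection $L$ and to obtain the assertion about the coreflection $C$ by the evident dualization (fibres $\leftrightarrow$ cofibres, products $\leftrightarrow$ coproducts, $\Sigma^{-1}\leftrightarrow\Sigma$), so I would only write out the reflection case. For the implication $(\Rightarrow)$, assume $L$ is semiexact, so that the subcategory $\cL$ of $L$\nobreakdash-local objects is closed under products, fibres, extensions and desuspension. The first and crucial step is to show that, for every object $X$, the fibre $F$ of the unit $l_X\colon X\to LX$ is $L$\nobreakdash-acyclic, i.e.\ $\cT(F,W)=0$ for every $L$\nobreakdash-local $W$; equivalently, that $({}^{\perp}\cL,\cL)$ is a $t$\nobreakdash-structure with $L$ as its truncation. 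I expect this to be the main obstacle, because the naive approach through the universal property of $L$ stalls on the fact that $\Sigma W$ need not be $L$\nobreakdash-local.

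The device that unblocks it is to exploit closure under extensions. Writing $i\colon F\to X$ for the fibre inclusion, given $g\colon F\to W$ with $W$ local I would form the homotopy pushout $P$ of $X\xleftarrow{i}F\xrightarrow{g}W$. Since $\cof(W\to P)\cong\cof(i)=LX$, there is a triangle $W\to P\to LX\to\Sigma W$, so $P$ is an extension of two local objects and is therefore local. The pushout leg $X\to P$ factors through $l_X$ and splits the map $P\to LX$, forcing $P\cong LX\oplus W$ and exhibiting $g$ as $\mu\circ i$ for some $\mu\colon X\to W$; as $W$ is local, $\mu$ factors through $l_X$, so $g=\mu\circ i$ factors through $l_X\circ i=0$ and vanishes. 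With acyclicity of unit fibres in hand, I would prove preservation as follows. Given a triangle $X\xrightarrow{u}Y\xrightarrow{v}Z\xrightarrow{w}\Sigma X$ with $Z$ local, I factor $v=\bar v\circ l_Y$ with $\bar v\colon LY\to Z$ and let $K$ be the fibre of $\bar v$, which is local by closure under fibres. Axiom (TR3) yields a morphism of triangles $(\alpha,l_Y,\id_Z)$ from $(u,v,w)$ to $K\to LY\xrightarrow{\bar v}Z\to\Sigma K$; comparing cofibres of the three vertical maps gives $\cof(\alpha)\cong\cof(l_Y)=\Sigma F$ with $F$ the fibre of $l_Y$, which is acyclic by the previous step, as is its desuspension $F$. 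Applying $\cT(-,T)$ with $T$ local then squeezes $\alpha$ into an isomorphism $\cT(K,T)\cong\cT(X,T)$, so $\alpha$ is an $L$\nobreakdash-localization and $K\cong LX$. Hence $K\to LY\to Z\to\Sigma K$ is isomorphic to $(Lu,Lv,Lw)$, i.e.\ $L$ preserves the triangle.

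For $(\Leftarrow)$, assume $L$ preserves every triangle whose third term is local. Closure of $\cL$ under products is free, since reflective subcategories are closed under limits. For closure under fibres and under extensions, take a triangle $X\to Y\to Z\to\Sigma X$ with the two relevant terms local; since $Z$ is local, $L$ preserves it, and the unit $l$ furnishes a morphism from the original triangle to its $L$\nobreakdash-image in which the two known\nobreakdash-local terms map by isomorphisms. The two\nobreakdash-out\nobreakdash-of\nobreakdash-three principle for morphisms of triangles then makes the third unit morphism an isomorphism, so the remaining term is local; this gives fibres and extensions at once, and desuspension is then automatic. Thus $\cL$ is semicolocalizing and $L$ is semiexact. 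The one point in this direction demanding care is that the comparison of a triangle with its $L$\nobreakdash-image is mediated by the natural map $\Sigma\nu_X\colon\Sigma LX\to L\Sigma X$ of~(\ref{natural}): one must invoke Lemma~\ref{sigma} to see that, whenever $L$ preserves the triangle in question, $\Sigma\nu_X$ is precisely the isomorphism identifying $L\Sigma X$ with $\Sigma LX$, so that $(l_X,l_Y,l_Z)$ is a genuine morphism of triangles and the two\nobreakdash-out\nobreakdash-of\nobreakdash-three principle is applicable.
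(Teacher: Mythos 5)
Your proposal is correct, and the two halves compare differently with the paper. Your converse half is the paper's converse almost verbatim: compare the triangle with its $L$\nobreakdash-image via the unit maps, note that $l_X$ and $l_Z$ are isomorphisms, invoke Lemma~\ref{sigma} so that the identification $\Sigma LX\cong L\Sigma X$ is unit\nobreakdash-compatible and $(l_X,l_Y,l_Z)$ is a genuine morphism of triangles, then finish by two\nobreakdash-out\nobreakdash-of\nobreakdash-three. Your forward half is genuinely different. The paper does it in one stroke: the cofibre $C$ of $l_X\circ(-\Sigma^{-1}w)\colon\Sigma^{-1}Z\to LX$ is $L$\nobreakdash-local because it is an extension of $Z$ by $LX$, and the five lemma shows that the comparison map $Y\to C$ is an $L$\nobreakdash-localization, so this cofibre triangle is already the image triangle. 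You instead first prove that the fibre $F$ of each unit satisfies $\cT(F,W)=0$ for every $L$\nobreakdash-local $W$ (your pushout\nobreakdash-and\nobreakdash-splitting argument for this is correct and uses only closure under extensions and the universal property of the unit), and only then localize the middle term: the fibre $K$ of $\bar v\colon LY\to Z$ is local by closure under fibres, and identifying the fibre of $\alpha\colon X\to K$ with $F$ shows that $\alpha$ is an $L$\nobreakdash-localization. What you gain: the acyclicity of unit fibres is exactly the semiorthogonality $\cC={}^{\lsemiperp}\cL$ of Theorem~\ref{ref-coref}(iii), which the paper can only deduce \emph{after} (and from) Theorem~\ref{semiexactlocs}; your route proves it up front, independently, and makes the torsion\nobreakdash-pair picture explicit. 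What the paper gains: brevity, and it consumes only closure under extensions and desuspension, whereas you also need closure under fibres to see that $K$ is local (immaterial here, since semiexactness supplies all of these). Two points to tighten. First, ``comparing cofibres of the three vertical maps'' is not a primitive move in a triangulated category: for a given morphism of triangles the cofibres need not form a triangle. The clean fix is to apply the octahedral axiom to the factorization $v=\bar v\circ l_Y$, which produces at once a map $\alpha\colon X\to K$ fitting into a morphism of triangles $(\alpha,l_Y,\id_Z)$ and a triangle $F\to X\xrightarrow{\alpha}K\to\Sigma F$, which is all you use. Second, Lemma~\ref{sigma} is stated under the hypothesis that $L$ is semiexact, which in the converse direction is precisely the conclusion; what you (and the paper) really use is that its proof only needs $\nu_X$ to exist and $\Sigma LX$ to be $L$\nobreakdash-local, facts that have to be extracted from the preservation hypothesis (here $X$ is local, so this comes from the preserved triangle $X\xrightarrow{\id}X\to 0\to\Sigma X$). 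Since the paper's own proof shares this wrinkle, it does not count against your proposal.
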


\begin{proof}
We only prove the first part, as the second part is proved dually. Assume that $L$ is a semiexact reflection 
and let $C$ be a cofibre of $l_X\circ (-\Sigma^{-1}w)$. Then there is a commutative diagram of triangles
\[
\xymatrix{
\Sigma^{-1} Z\ar[r]^-{-\Sigma^{-1}w}\ar@{=}[d] & X\ar[r]^{u}\ar[d]^{l_X} & Y\ar[r]^{v}\ar@{.>}[d]^{\varphi} & Z\ar[r]^{w}\ar@{=}[d] & \Sigma X\ar[d]^{\Sigma l_X} \\
\Sigma^{-1}Z\ar[r] & LX\ar[r] & C\ar[r] & Z\ar[r] & \Sigma LX
}
\]
where $C$ is $L$\nobreakdash-local since $L$ is semiexact. Again by the five-lemma, 
$\varphi$ induces an isomorphism $\cT(C, W)\cong \cT(Y, W)$ for every $L$\nobreakdash-local object~$W$. Hence $C\cong LY$, and
the universal property of $L$ implies then that the resulting arrows $LX\to LY$,
$LY\to Z$ and $Z\to L\Sigma X$ are $Lu$, $Lv$ and $Lw$, as needed.

Conversely, let $(u,v,w)$ be a triangle where $X$ and $Z$ are $L$\nobreakdash-local. Then, since $L$ preserves this triangle, 
we have a commutative diagram
\[
\xymatrix{
\Sigma^{-1} Z\ar[r]^-{-\Sigma^{-1}w}\ar[d]^{\Sigma^{-1}l_Z} & X\ar[r]^{u}\ar[d]^{l_X} & Y\ar[r]^{v}\ar[d]^{l_Y} & Z\ar[r]^{w}\ar[d]^{l_Z} & \Sigma X\ar[d]^{\Sigma l_X\circ\Sigma \nu_X} \\
\Sigma^{-1}LZ\ar[r] & LX\ar[r]_{Lu} & LY\ar[r]_{Lv} & LZ\ar[r]_{Lw} & L\Sigma X
}
\]
where $l_X$ and $l_Z$ are isomorphisms, and $\Sigma\nu_X$ is also an isomorphism by
Lemma~\ref{sigma}. It then follows that $l_Y$ is also an isomorphism and hence 
$Y$ is $L$\nobreakdash-local. Similarly, if $Y$ and $Z$ are $L$\nobreakdash-local, then $X$ is $L$\nobreakdash-local.
Therefore, the subcategory of $L$\nobreakdash-local objects is closed under fibres and extensions, as claimed.
\end{proof}

\subsection{Orthogonality and semiorthogonality}
\label{OR}

Several kinds of orthogonality can be considered in a triangulated category.
In this article it will be convenient to use the same notation as in~\cite{BIK}.
Thus, for a class of objects $\cD$ in a triangulated category~$\cT$ with products
and coproducts, we write
\[
\begin{array}{ll}
{}^{\boldperp}\cD=\{ X \mid \mbox{$\cT(X,\Sigma^k D)=0$ for all $D\in\cD$ and $k\in\Z$} \},\\[0.3cm]
\cD^{\boldperp}=\{ Y \mid \mbox{$\cT(\Sigma^k D,Y)=0$ for all $D\in\cD$ and $k\in\Z$} \}.
\end{array}
\]

For every class of objects $\cD$, the class ${}^{\boldperp}\cD$ is localizing 
and $\cD^{\boldperp}$ is colocalizing. A~localizing subcategory $\cC$
is called \emph{closed} if $\cC={}^{\boldperp}\cD$ for some~$\cD$, or
equivalently if $\cC={}^{\boldperp}(\cC^{\boldperp})$, and a colocalizing
subcategory $\cL$ is called \emph{closed} if $\cL=\cD^{\boldperp}$ for some~$\cD$, or
equivalently if $\cL=({}^{\boldperp}\cL)^{\boldperp}$.

For example, if we work in the homotopy category of spectra and
$E$ is a spectrum, then the statement
$X\in {}^{\boldperp}E$ holds if and only if $E^*(X)=0$, where $E^* $ is the reduced cohomology theory
represented by~$E$. Thus, ${}^{\boldperp}E$ is the class of $E^*$\nobreakdash-acyclic spectra.
(Here and later, we write ${}^{\boldperp}E$ instead of ${}^{\boldperp}\{E\}$ for simplicity.)

Let us introduce the following variant, which we call \emph{semiorthogonality}:
\[
\begin{array}{ll}
{}^{\lsemiperp}\cD=\{ X \mid \mbox{$\cT(X,\Sigma^k D)=0$ for all $D\in\cD$ and $k\le 0$} \},\\[0.3cm]
\cD^{\,\rsemiperp}=\{ Y \mid \mbox{$\cT(\Sigma^k D,Y)=0$ for all $D\in\cD$ and $k\ge 0$} \}.
\end{array}
\]

Similarly as above,
for every class of objects $\cD$ the class ${}^{\lsemiperp}\cD$ is semilocalizing,
while $\cD^{\,\rsemiperp}$ is semicolocalizing. A~semilocalizing subcategory $\cC$
will be called \emph{closed} if $\cC={}^{\lsemiperp}\cD$ for some class of objects~$\cD$, or
equivalently if $\cC={}^{\lsemiperp}(\cC^{\,\rsemiperp})$. A~semicolocalizing
subcategory $\cL$ will be called \emph{closed} if $\cL=\cD^{\,\rsemiperp}$ for some~$\cD$, or
equivalently if $\cL=({}^{\lsemiperp}\cL)^{\rsemiperp}$.

Note that, if a class $\cD$ is preserved by $\Sigma$ and~$\Sigma^{-1}$, then ${}^{\lsemiperp}\cD=
{}^{\boldperp}\cD$ and $\cD^{\,\rsemiperp}=\cD^{\boldperp}$. Therefore, if a localizing
subcategory $\cC$ is closed, then it is also closed as a semilocalizing subcategory, since
$\cC={}^{\boldperp}(\cC^{\boldperp})={}^{\lsemiperp}(\cC^{\boldperp})$. The dual assertion is 
of course also true.

Semiexact reflections and semiexact coreflections are linked through
the following basic result, which generalizes Lemma~3.1.6 in~\cite{HPS}; 
compare also with \cite[Proposition~2.3]{BR}, \cite[Lemma~1.2]{BIK}, and \cite[Proposition~4.12.1]{Kr}.

\begin{theo}
\label{ref-coref}
In every triangulated category $\cT$ 
there is a bijective correspondence between semiexact reflections and
semiexact coreflections such that, if a reflection $L$ is paired
with a coreflection $C$ under this bijection, then the following hold:
\begin{itemize}
\item[{\rm (i)}] For every $X$, the morphisms $l_X\colon X\to LX$ and
$c_X\colon CX\to X$ fit into a triangle 
\[
\xymatrix{
CX \ar[r] & X \ar[r] & LX \ar[r] & \Sigma CX.
}
\]
\item[{\rm (ii)}]
The class $\cL$ of $L$\nobreakdash-local objects coincides with the
class of $C$\nobreakdash-acyclics, and the class $\cC$ of $C$\nobreakdash-colocal objects
coincides with the class of $L$\nobreakdash-acyclics. 
\item[{\rm (iii)}] The class $\cC$ is equal to ${}^{\lsemiperp}\cL$, and $\cL$ is equal to $\cC^{\,\rsemiperp}$.
\item[{\rm (iv)}] $L$ is exact if and only if $C$ is exact. In this case,
$\cC={}^{\boldperp}\cL$ and $\cL=\cC^{\boldperp}$.
\end{itemize}
\end{theo}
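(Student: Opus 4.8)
The plan is to recover the coreflection from a semiexact reflection by taking fibres of the unit, to observe that the opposite (cofibre) construction recovers the reflection, and to deduce that these two passages are mutually inverse. So, starting from a semiexact reflection $L$ with unit $l\colon\Id\to L$, I would complete each $l_X$ to a triangle
\[ CX\xrightarrow{\ c_X\ }X\xrightarrow{\ l_X\ }LX\longrightarrow\Sigma CX, \]
which defines $CX$ and gives assertion~(i) by construction. Writing $\cL$ for the class of $L$\nobreakdash-local objects, I would first set $\cC={}^{\lsemiperp}\cL$ and record two facts about it. Since $L$ is semiexact, $\cL$ is semicolocalizing, hence closed under $\Sigma^{-1}$; therefore ${}^{\lsemiperp}\cL=\{X\mid\cT(X,W)=0\text{ for all }W\in\cL\}$, and by the remarks in Subsection~\ref{OR} this class is semilocalizing. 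Moreover $\cC$ coincides with the class of $L$\nobreakdash-acyclics: if $LX=0$ then $\cT(X,W)=\cT(LX,W)=0$ for every local~$W$, and conversely $\cT(X,W)=0$ for all local $W$ forces $l_X=0$, so $Ll_X=0$, and since $Ll_X$ is an isomorphism we get $LX=0$.

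The core step is to check that $CX\in\cC$ for every~$X$ and that $c_X$ is the expected terminal morphism. Because the third term $LX$ of the defining triangle is $L$\nobreakdash-local, Theorem~\ref{semiexactlocs} shows that $L$ preserves this triangle; applying $L$ and using that $Ll_X$ is an isomorphism yields $LCX=0$, so $CX\in\cC$. I would then verify that $c_X$ is terminal among morphisms into $X$ from objects of~$\cC$: given $D\in\cC$ and $f\colon D\to X$, the composite $l_X\circ f$ lands in the local object $LX$, whence $l_X\circ f=0$ and $f$ factors through~$c_X$; uniqueness holds because any $h\colon D\to CX$ with $c_X\circ h=0$ factors through $\Sigma^{-1}LX\in\cL$, and $\cT(D,\Sigma^{-1}LX)=0$ forces $h=0$. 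This exhibits $c_X$ as the counit of a coreflection $C$ onto~$\cC$ (functoriality being automatic from terminality), and $C$ is semiexact since $\cC$ is semilocalizing. For $D\in\cC$ one has $LD=0$, so $c_D$ is an isomorphism, confirming that the $C$\nobreakdash-colocal objects are exactly~$\cC$.

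Next I would run the dual (cofibre) construction, sending a semiexact coreflection $C$ to the semiexact reflection $L'$ with $L'X$ the cofibre of~$c_X$. Since the fibre and cofibre constructions are read off the single triangle~(i), these two passages are mutually inverse, which gives the claimed bijection. Assertion~(ii) is then immediate, as $X$ is $L$\nobreakdash-local if and only if $CX=0$ if and only if $X$ is $C$\nobreakdash-acyclic, and the identification of $\cC$ with the $L$\nobreakdash-acyclics was already made. Assertion~(iii) is the equality $\cC={}^{\lsemiperp}\cL$ established above, together with its dual $\cL=\cC^{\,\rsemiperp}$, proved by the symmetric argument.

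For~(iv), Theorem~\ref{exactlocs} says that $L$ is exact if and only if $\cL$ is closed under~$\Sigma$, hence—being already closed under $\Sigma^{-1}$—under all shifts. In that case ${}^{\lsemiperp}\cL={}^{\boldperp}\cL$ is localizing, so $\cC$ is closed under all shifts and $C$ is exact, with $\cC={}^{\boldperp}\cL$ and $\cL=\cC^{\boldperp}$; the converse follows dually from $\cC^{\,\rsemiperp}=\cC^{\boldperp}$. The only genuinely delicate point in this programme is the acyclicity $CX\in\cC$, which is exactly where semiexactness is needed (through the closure of $\cL$ under $\Sigma^{-1}$, equivalently through Theorem~\ref{semiexactlocs}); once that is in hand, everything else is a formal consequence of the universal properties of $L$ and $C$ and of the symmetry of the triangle~(i).
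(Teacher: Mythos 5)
Your proposal is correct, and its skeleton matches the paper's: both construct $CX$ as a fibre of the unit $l_X$, both obtain the key vanishing $LCX=0$ from Theorem~\ref{semiexactlocs}, and both prove part~(iii) by the same orthogonality computations. The genuine difference is organizational, and it is instructive. The paper builds $C$ as a functor first (choosing $Cf$ by hand, proving its uniqueness from $\cT(CX,\Sigma^kLY)=0$ for $k\le 0$, checking idempotency), and then proves semiexactness of $C$ by an explicit diagram chase showing the colocal class is closed under cofibres and extensions. You instead declare $\cC={}^{\lsemiperp}\cL$ at the outset and verify directly that $c_X$ is a universal arrow from $\cC$ into $X$; functoriality of $C$ is then automatic, and semiexactness comes for free because ${}^{\lsemiperp}$ of any class is semilocalizing by Subsection~\ref{OR}. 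Similarly, for~(iv) the paper compares the suspension of the triangle for $X$ with the triangle for $\Sigma X$ and invokes Theorem~\ref{exactlocs} together with its dual, whereas you argue formally: exactness of $L$ makes $\cL$ closed under both shifts, so semiorthogonality collapses to orthogonality, $\cC={}^{\boldperp}\cL$ is localizing, and $C$ is exact (and dually). Your route buys economy and isolates exactly where semiexactness is used; the paper's route is more explicit at the level of triangulated diagram arguments. On the one point where care is genuinely needed beyond the construction --- that the fibre and cofibre passages are mutually inverse, so that one really gets a bijection --- your appeal to the triangle in~(i) together with both halves of~(iii) is at the same (brief) level of rigor as the paper's own treatment.
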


\begin{proof}
Let $L$ be a semiexact reflection. For every $X$ in~$\cT$, choose a fibre $CX$ of the
unit morphism $l_X\colon X\to LX$. Thus, for every $X$ in $\cT$ we have a triangle
\begin{equation}
\label{C-L_triangle}
\xymatrix{
CX\ar[r]^{c_X} & X\ar[r]^{l_X} & LX\ar[r] & \Sigma CX.
}
\end{equation}
If we apply $L$ to~(\ref{C-L_triangle}), 
since $LX$ is $L$\nobreakdash-local, Theorem~\ref{semiexactlocs} implies that 
\[
\xymatrix{
LCX\ar[r]^{Lc_X} & LX\ar[r]^{Ll_X} & LLX\ar[r] & \Sigma LCX
}
\]
is a triangle, and hence $LCX=0$. 
For each morphism $f\colon X\to Y$, choose a morphism $Cf\colon CX\to CY$ such that
the following diagram commutes:
\[
\xymatrix{
\Sigma^{-1} LX\ar[r]\ar[d]_{\Sigma^{-1}Lf} & CX\ar[r]^{c_X}\ar[d]_{Cf} & X\ar[r]^{l_X}\ar[d]^{f} & LX\ar[d]^{Lf} \\
\Sigma^{-1}LY\ar[r] & CY\ar[r]_{c_Y} & Y\ar[r]_{l_Y} & LY\rlap{.}
}
\]
Then $Cf$ is unique, since $LCX=0$ implies that $\cT(CX,\Sigma^kLY)=0$ for $k\le 0$,
and therefore $c_Y\colon CY\to Y$ induces a bijection $\cT(CX,CY)\cong\cT(CX,Y)$.
This yields functoriality of~$C$ and naturality of~$c$.
Moreover, for each~$X$ the following diagram commutes:
\[
\xymatrix{
CCX\ar[r]^{c_{CX}}\ar[d]_{Cc_X} & CX\ar[r]^{l_{CX}}\ar[d]_{c_X} & LCX\ar[d]^{Lc_X} \\
CX\ar[r]_{c_X} & X\ar[r]_{l_X} & LX\rlap{.}
}
\]
Here the fact that $LCX=0$ implies that $c_{CX}$ is an isomorphism. And, since $c_X$ induces
a bijection $\cT(CCX,CX)\cong\cT(CCX,X)$, we infer that $c_{CX}=Cc_X$ and therefore
$C$ is a coreflection.

In order to prove that $C$ is semiexact, we 
need to show that the class of $C$\nobreakdash-colocal 
objects is closed under cofibres and extensions. 
For this, let $X\rightarrow Y\rightarrow Z$ be a triangle 
and assume first that $X$ and $Z$ are
$C$\nobreakdash-colocal. Consider the following diagram, where all the columns and the central row are triangles:
\[
\xymatrix{
CX\ar[r]\ar[d] & CY\ar[r]\ar[d] & CZ\ar[d]\\
X\ar[r]\ar[d] & Y\ar[r]\ar[d] & Z\ar[d] \\
LX\ar[r] & LY\ar[r] & LZ\rlap{.}
}
\]
Since $CX\cong X$ and $CZ\cong Z$, we have that $LX=0$ and $LZ=0$. Since $L$ is semiexact, 
$L\Sigma X=0$ as well.
This implies that $\cT(Y,W)\cong\cT(Z,W)$ for every $L$\nobreakdash-local object~$W$.
Hence the morphism $LY\to LZ$ is an isomorphism. Therefore $LY=0$ and $CY\cong Y$, as needed.
Second, assume that $X$ and $Y$ are $C$\nobreakdash-colocal. Then $LX=0$ and $LY=0$ and
the same argument tells us that $LZ=0$, so $CZ\cong Z$.

Part (ii) follows directly from~(i). The first claim of (iii) is proved as follows. 
Let $X\in \cC$. Since $LX=0$ and the class of $L$\nobreakdash-local objects is closed under desuspension, we have that
\[
\cT(X,\Sigma^k D)\cong \cT(LX, \Sigma^k D)=0 \ \mbox{for $k\le 0$ and all $D\in \cL$}.
\]
This tells us that $X\in {}^{\lsemiperp}\cL$. Conversely, if $X\in {}^{\lsemiperp}\cL$, then 
$\cT(LX, LX)\cong\cT(X, LX)=0$. Therefore $LX=0$, so $X\in \cC$. The second part is proved dually.

The first claim of (iv) follows by considering the commutative diagram
\[
\xymatrix{
\Sigma CX\ar[r]^{\Sigma c_X}\ar[d] & \Sigma X\ar[r]^{\Sigma l_X}\ar@{=}[d] & \Sigma LX \ar[r]\ar[d] & \Sigma\Sigma CX\ar[d] \\
C\Sigma X\ar[r]_{c_{\Sigma X}} & \Sigma X\ar[r]_{l_{\Sigma X}} & L\Sigma X\ar[r] & \Sigma C\Sigma X\rlap{,}
}
\]
and using Theorem~\ref{exactlocs} (and its dual).
The rest is proved with the same arguments as in part~(iii).
\end{proof}

In the homotopy category of spectra, every $f$\nobreakdash-localization
functor $L_f$ in the sense of \cite{B1} or \cite{DF} is a reflection, 
and cellularizations ${\rm Cell}_A$ are coreflections. Classes of $f$\nobreakdash-local
spectra are closed under fibres, but not under cofibres nor extensions, in general. Dually,
$A$\nobreakdash-cellular classes are closed under cofibres.
Nullification functors $P_A$ (i.e., $f$\nobreakdash-localizations where $f\colon A\to 0$, 
such as Postnikov sections) are semiexact reflections. Homological localizations
of spectra (or, more generally, nullifications $P_A$ where $\Sigma A\simeq A$)
are exact reflections. One proves as in \cite{Cha}
that the kernel of a nullification $P_A$ is precisely the closure under
extensions of the image of ${\rm Cell}_A$.

\subsection{Torsion pairs and $t$-structures}
\label{CTS}

For a class of objects $\cD$ in a triangulated category $\cT$ with products
and coproducts, we denote by
$\loc(\cD)$ the smallest localizing subcategory of $\cT$ that contains~$\cD$;
that is, the intersection of all the localizing subcategories of $\cT$ that
contain~$\cD$. We use the terms $\coloc(\cD)$, $\semiloc(\cD)$, and $\semicoloc(\cD)$
analogously, and we say that each of these is \emph{generated} by~$\cD$.
If $\cD$ consists of only one object, then we say that the respective
classes are \emph{singly generated}.

Note that if $\cD=\{D_i\}_{i\in I}$ is a set (not a proper class), then
\[
\loc(\cD)=\loc\left(\textstyle\coprod_{i\in I}D_i\right) \ \mbox{and} \
\coloc(\cD)=\coloc\left(\textstyle\prod_{i\in I}D_i\right),
\]
and similarly with $\semiloc(\cD)$ and $\semicoloc(\cD)$.
Thus, in the presence of products and coproducts, ``generated by a set''
and ``singly generated'' mean the same thing.

It is important to relate classes generated by $\cD$ in this sense with the
corresponding closures of $\cD$ under orthogonality or semiorthogonality.
Although this seems to be difficult in general, 
it follows from Theorem~\ref{ref-coref}
that reflective colocalizing or semicolocalizing subcategories are closed,
and coreflective localizing or semilocalizing subcategories are also closed.
This has the following consequence.

\begin{propo}
\label{cid}
Let $\cD$ be any class of objects in a triangulated category with products and coproducts.
\begin{itemize}
\item[{\rm (i)}]
If $\semicoloc(\cD)$ is reflective, then $\semicoloc(\cD)=({}^{\lsemiperp}\cD)^{\rsemiperp}$,
and if $\semiloc(\cD)$ is coreflective, then $\semiloc(\cD)={}^{\lsemiperp}(\cD^{\,\rsemiperp})$.

\item[{\rm (ii)}]
If $\coloc(\cD)$ is reflective, then $\coloc(\cD)=({}^{\boldperp}\cD)^{\boldperp}$,
and if $\loc(\cD)$ is coreflective, then $\loc(\cD)={}^{\boldperp}(\cD^{\boldperp})$.
\end{itemize}
\end{propo}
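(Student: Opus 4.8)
The plan is to deduce Proposition~\ref{cid} directly from Theorem~\ref{ref-coref}, since that theorem already establishes the key fact that the $C$-colocal class and the $L$-local class in a semiexact pair satisfy $\cC = {}^{\lsemiperp}\cL$ and $\cL = \cC^{\,\rsemiperp}$ (and the orthogonal versions in the exact case). The whole point of Proposition~\ref{cid} is that ``generated by $\cD$'' and ``closed under (semi)orthogonality'' coincide once a reflection or coreflection is available, so the argument is about identifying $\semicoloc(\cD)$ (resp.~$\semiloc(\cD)$) with the local (resp.~colocal) class of the appropriate functor and then invoking the closure property already proved.

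First I would treat the second half of (i): suppose $\semiloc(\cD)$ is coreflective, say with coreflection $C$. A coreflective semilocalizing subcategory is, by definition, the class $\cC$ of $C$-colocal objects for a semiexact coreflection $C$; pairing $C$ with its semiexact reflection $L$ via Theorem~\ref{ref-coref}, part~(iii) gives $\cC = {}^{\lsemiperp}\cL$ where $\cL = \cC^{\,\rsemiperp}$, so $\semiloc(\cD) = {}^{\lsemiperp}\bigl(\semiloc(\cD)^{\,\rsemiperp}\bigr)$. It then remains to replace $\semiloc(\cD)$ by $\cD$ inside the double-semiorthogonal. Since $\cD \subseteq \semiloc(\cD)$, contravariance of semiorthogonality gives $\semiloc(\cD)^{\,\rsemiperp} \subseteq \cD^{\,\rsemiperp}$, hence ${}^{\lsemiperp}(\cD^{\,\rsemiperp}) \subseteq {}^{\lsemiperp}\bigl(\semiloc(\cD)^{\,\rsemiperp}\bigr) = \semiloc(\cD)$. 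For the reverse inclusion, observe that ${}^{\lsemiperp}(\cD^{\,\rsemiperp})$ is a semilocalizing subcategory containing $\cD$, so by minimality $\semiloc(\cD) \subseteq {}^{\lsemiperp}(\cD^{\,\rsemiperp})$; combining the two inclusions yields $\semiloc(\cD) = {}^{\lsemiperp}(\cD^{\,\rsemiperp})$. The first half of~(i), concerning reflective $\semicoloc(\cD)$, is proved by the dual argument, using that a reflective semicolocalizing subcategory is the $L$-local class $\cL = \cC^{\,\rsemiperp}$ for a semiexact pair and the analogous minimality of $({}^{\lsemiperp}\cD)^{\rsemiperp}$ as a semicolocalizing subcategory containing~$\cD$.

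For part~(ii) the strategy is identical, but one must first upgrade the relevant functor from semiexact to exact so that the \emph{full} orthogonality $\boldperp$ is in force rather than just $\lsemiperp$. If $\loc(\cD)$ is coreflective, then it is a coreflective \emph{localizing} subcategory, so the associated coreflection $C$ is exact, and by Theorem~\ref{ref-coref}(iv) we get $\cC = {}^{\boldperp}\cL$ with $\cL = \cC^{\boldperp}$; the minimality argument for $\loc$ among localizing subcategories then gives $\loc(\cD) = {}^{\boldperp}(\cD^{\boldperp})$, and dually for reflective $\coloc(\cD)$. The only point that needs care is this passage to exactness, which is exactly what Theorem~\ref{ref-coref}(iv) supplies, so no genuine obstacle arises.

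The main thing to get right, rather than a hard obstacle, is the bookkeeping that replaces the generated subcategory by $\cD$ inside the double orthogonal: one direction uses order-reversal of semiorthogonality together with the closure identity from Theorem~\ref{ref-coref}, and the other uses that ${}^{\lsemiperp}(\cD^{\,\rsemiperp})$ (resp.\ its exact and dual analogues) is itself a subcategory of the correct closure type containing $\cD$, so minimality of the generated subcategory applies. Since both directions are short once the identification with a (semi)exact pair is made, the proof is essentially a formal consequence of Theorem~\ref{ref-coref} and requires no new construction.
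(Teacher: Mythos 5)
Your proposal is correct and takes essentially the same approach as the paper: extract closedness of the (co)reflective subcategory from Theorem~\ref{ref-coref}(iii) (and (iv) in the exact case), then combine monotonicity of the double (semi)orthogonal with minimality of the generated subcategory. The only cosmetic difference is that you spell out the coreflective half of part~(i) while the paper spells out the reflective half and says the rest follows similarly; the two are dual.
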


\begin{proof}
We only prove the first claim, as the others follow similarly.
Since $\cD\subseteq \semicoloc(\cD)$, we have
$
({}^{\lsemiperp}\cD)^{\rsemiperp}\subseteq ({}^{\lsemiperp}\semicoloc(\cD))^{\rsemiperp}
$.
If $\semicoloc(\cD)$ is reflective, then it follows from part~(iii) of Theorem~\ref{ref-coref}
that $\semicoloc(\cD)$ is closed. Hence, $({}^{\lsemiperp}\cD)^{\rsemiperp}\subseteq\semicoloc(\cD)$.
The reverse inclusion follows from the fact that $({}^{\lsemiperp}\cD)^{\rsemiperp}$
is a semicolocalizing subcategory containing~$\cD$, and $\semicoloc(\cD)$ is minimal
with this property.
\end{proof}

We remark that, for every class~$\cD$, we have $\cD^{\,\rsemiperp}=\semiloc(\cD)^{\rsemiperp}$,
and similarly with left semiorthogonality or orthogonality in either side. To prove this assertion,
only the inclusion $\cD^{\,\rsemiperp}\subseteq\semiloc(\cD)^{\rsemiperp}$ needs to be checked,
and this is done as follows. Since ${}^{\lsemiperp}(\cD^{\,\rsemiperp})$
is semi\-localizing and contains~$\cD$, it also contains $\semiloc(\cD)$. Hence,
\begin{equation}
\label{closure}
\cD^{\,\rsemiperp}=({}^{\lsemiperp}(\cD^{\,\rsemiperp}))^{\rsemiperp}\subseteq\semiloc(\cD)^{\rsemiperp},
\end{equation}
as claimed.

\begin{propo}
\label{coreflecting}
Let $\cD$ be any class of objects in a triangulated category~$\cT$.
Suppose that for each $X\in\cT$ there is a triangle
$CX\to X\to LX\to \Sigma CX$ where $CX\in\semiloc(\cD)$ and $LX\in\semiloc(\cD)^{\rsemiperp}$.
Then $C$ defines a semiexact coreflection onto $\semiloc(\cD)$.
\end{propo}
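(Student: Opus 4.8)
The plan is to show that the given triangles exhibit $c_X\colon CX\to X$ as the terminal morphism from the subcategory $\cC:=\semiloc(\cD)$ into~$X$, and then to identify the $C$\nobreakdash-colocal objects with $\cC$ itself, which is semilocalizing by construction. Write $\cL:=\cC^{\,\rsemiperp}=\semiloc(\cD)^{\,\rsemiperp}$, so that by hypothesis $CX\in\cC$ and $LX\in\cL$ for every~$X$. First I would fix $X$ and an object $W\in\cC$, and apply the cohomological functor $\cT(W,-)$ to the triangle $CX\to X\to LX\to\Sigma CX$ to obtain the exact sequence
\[
\cT(W,\Sigma^{-1}LX)\longrightarrow\cT(W,CX)\xrightarrow{(c_X)_*}\cT(W,X)\longrightarrow\cT(W,LX).
\]
Since $LX\in\cL=\cC^{\,\rsemiperp}$ and $W\in\cC$, the definition of semiorthogonality gives $\cT(\Sigma^k W,LX)=0$ for all $k\ge 0$; taking $k=0$ and $k=1$ yields $\cT(W,LX)=0$ and $\cT(W,\Sigma^{-1}LX)\cong\cT(\Sigma W,LX)=0$. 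Hence $(c_X)_*$ is a bijection, so $c_X$ is terminal among morphisms from objects of $\cC$ into~$X$.

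Next I would upgrade this object-wise universal property to a genuine coreflection. Given $f\colon X\to Y$, the bijection $\cT(CX,CY)\cong\cT(CX,Y)$ produces a unique $Cf$ with $c_Y\circ Cf=f\circ c_X$; uniqueness then gives functoriality of $C$ and naturality of~$c$, exactly as in the construction carried out in the proof of Theorem~\ref{ref-coref}. This makes $C$ right adjoint to the inclusion $\cC\hookrightarrow\cT$, that is, a coreflection onto~$\cC$.

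Finally I would identify the class of $C$\nobreakdash-colocal objects with~$\cC$. If $X\in\cC$, then the bijection $\cT(X,CX)\cong\cT(X,X)$ produces a morphism $s$ with $c_X\circ s=\id_X$, and the bijection $\cT(CX,CX)\cong\cT(CX,X)$ forces $s\circ c_X=\id_{CX}$ (both $s\circ c_X$ and $\id_{CX}$ map to $c_X$ under $(c_X)_*$); thus $c_X$ is an isomorphism and $X$ is $C$\nobreakdash-colocal. Conversely, any $C$\nobreakdash-colocal $X$ satisfies $X\cong CX\in\cC$. Hence the $C$\nobreakdash-colocal objects are precisely $\semiloc(\cD)$, which is semilocalizing by definition, and therefore $C$ is a semiexact coreflection onto $\semiloc(\cD)$, as claimed.

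The hard part — really the only delicate point — is observing that one needs the vanishing of the \emph{shifted} group $\cT(W,\Sigma^{-1}LX)$ in addition to $\cT(W,LX)$, since without the former the map $(c_X)_*$ would only be surjective rather than bijective. This is precisely why semiorthogonality is defined over the whole range $k\ge 0$ rather than merely at $k=0$: the $k=1$ instance is exactly what guarantees uniqueness of factorizations and hence that $C$ is a coreflection and not merely a weak one. Everything else is the standard universal-arrow bookkeeping already performed in Theorem~\ref{ref-coref}.
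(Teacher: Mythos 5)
Your proof is correct and follows essentially the same route as the paper's: both hinge on the vanishing of $\cT(Y,LX)$ and $\cT(Y,\Sigma^{-1}LX)$ for $Y\in\semiloc(\cD)$ (the $k=0$ and $k=1$ cases of semiorthogonality) to conclude that $c_X$ induces a bijection $\cT(Y,CX)\cong\cT(Y,X)$, whence $C$ is a coreflection onto the semilocalizing subcategory $\semiloc(\cD)$ and is therefore semiexact. The only difference is that you spell out the standard bookkeeping (functoriality of $C$, naturality of $c$, identification of the $C$\nobreakdash-colocal objects) that the paper leaves implicit.
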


\begin{proof}
If $Y$ is any object in $\semiloc(\cD)$, then $\cT(Y,LX)=0$ and $\cT(Y,\Sigma^{-1}LX)=0$.
Hence, the morphism $CX\to X$ induces a bijection $\cT(Y,CX)\cong\cT(Y,X)$, so $C$ is
a coreflection onto $\semiloc(\cD)$, hence semiexact.
\end{proof}

This fact will be used in Section~\ref{CLS}.
There is of course a dual result, and there are corresponding
facts for exact coreflections and exact reflections; cf.~\cite[Theorem~9.1.13]{N1}.

Under the hypotheses of Proposition~\ref{coreflecting}, the classes $\semiloc(\cD)$ and
$\semiloc(\cD)^{\rsemiperp}$ form a \emph{torsion pair} as defined in
\cite[I.2.1]{BR}. This yields the following fact, which also relates these notions
with $t$\nobreakdash-structures; see \cite[\S~1]{AJS1} as well.

\begin{theo}
\label{superbijection}
In every triangulated category $\cT$ there is a bijective correspondence
between the following classes:
\begin{itemize}
\item[{\rm (i)}]
Reflective semicolocalizing subcategories.
\item[{\rm (ii)}]
Coreflective semilocalizing subcategories.
\item[{\rm (iii)}]
Torsion pairs.
\item[{\rm (iv)}]
$t$-structures.
\end{itemize}
\end{theo}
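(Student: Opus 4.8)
The plan is to route all four descriptions through the single notion of a semiexact reflection--coreflection pair furnished by Theorem~\ref{ref-coref}, and then to verify that passing to each description and back is the identity. First I would dispose of the equivalence between (i) and (ii). A reflective semicolocalizing subcategory $\cL$ is, by definition, the class of local objects of a reflection $L$ that is semiexact (precisely because $\cL$ is semicolocalizing), and a coreflective semilocalizing subcategory $\cC$ is the class of colocal objects of a semiexact coreflection $C$. Theorem~\ref{ref-coref} already pairs semiexact reflections with semiexact coreflections bijectively, and its part~(iii) identifies the associated classes as $\cC={}^{\lsemiperp}\cL$ and $\cL=\cC^{\,\rsemiperp}$. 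The two round trips $\cL\mapsto{}^{\lsemiperp}\cL\mapsto({}^{\lsemiperp}\cL)^{\rsemiperp}$ and $\cC\mapsto\cC^{\,\rsemiperp}\mapsto{}^{\lsemiperp}(\cC^{\,\rsemiperp})$ return the original subcategory exactly because reflective semicolocalizing and coreflective semilocalizing subcategories are closed, which is what Theorem~\ref{ref-coref}(iii) asserts. Thus (i) and (ii) are in bijection with no further work.

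Next I would treat (ii) $\leftrightarrow$ (iii). Given a semiexact pair $(C,L)$, the triangle $CX\to X\to LX\to\Sigma CX$ of Theorem~\ref{ref-coref}(i) exhibits $(\cC,\cL)$ as a torsion pair: the vanishing $\cT(\cC,\cL)=0$ is the case $k=0$ of the relation $\cC={}^{\lsemiperp}\cL$, and the functorial triangle decomposition is supplied by the coreflection together with the reflection. Conversely, from a torsion pair $(\cC,\cL)$ with $\cL=\cC^{\,\rsemiperp}$ the functorial triangles recover a coreflection onto $\cC$ exactly by the argument of Proposition~\ref{coreflecting} (with $\cC$ in the role of $\semiloc(\cD)$); here it is essential that $\cT(Y,\Sigma^{-1}LX)=\cT(\Sigma Y,LX)$ vanishes for $Y\in\cC$, which is the instance $k=1$ of $LX\in\cC^{\,\rsemiperp}$ rather than a consequence of full orthogonality. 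Hence $\cC$ is coreflective and therefore closed under coproducts, while being the semiorthogonal class $\cC={}^{\lsemiperp}\cL$ makes it semilocalizing; closure under cofibres in particular follows from closure under $\Sigma$ and extensions, since a cofibre of a map between objects of $\cC$ is an extension of a suspension by an object of $\cC$. These two passages invert each other because such a torsion pair is determined by either half through semiorthogonality.

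It then remains to match (iii) with (iv), which I would do by reading off the $t$\nobreakdash-structure axioms from the torsion-pair data. The torsion class $\cC$ being semilocalizing records exactly the aisle condition $\Sigma\cC\subseteq\cC$ together with closure under coproducts and extensions, while $\cL$ semicolocalizing records the coaisle condition $\Sigma^{-1}\cL\subseteq\cL$; the one-sided vanishing $\cT(\Sigma^{k}D,Y)=0$ for $D\in\cC$, $Y\in\cL$ and $k\ge 0$ encoded by $\cL=\cC^{\,\rsemiperp}$ is precisely the half-orthogonality defining a $t$\nobreakdash-structure under the standard reindexing that takes the aisle to be $\cC$ and the coaisle to be $\cL$, and the functorial triangle is the truncation triangle. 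Reversing this reading turns a $t$\nobreakdash-structure back into a torsion pair of the same type, so (iii) and (iv) carry the same data.

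The step I expect to be the main obstacle is this last identification, because a torsion pair in the sense of \cite[I.2.1]{BR} need not in general have a torsion class closed under suspension, so not every such pair is a $t$\nobreakdash-structure. What makes the correspondence work here is that every torsion pair arising from~(ii) has a semilocalizing---hence $\Sigma$\nobreakdash-closed---torsion class, so the relation binding the two halves is the \emph{semiorthogonality} $\cC^{\,\rsemiperp}$ rather than the full orthogonality $\cC^{\boldperp}$; once this is recognized, the semiorthogonality matches the one-sided orthogonality in the definition of a $t$\nobreakdash-structure verbatim, and the remaining verifications are the formal consequences of Theorem~\ref{ref-coref} and Proposition~\ref{coreflecting} indicated above.
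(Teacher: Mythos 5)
Your route is the paper's: (i) and (ii) are matched through Theorem~\ref{ref-coref}, a coreflective semilocalizing $\cC$ yields the torsion pair $(\cC,\cC^{\,\rsemiperp})$ via Proposition~\ref{coreflecting}, and the remaining identifications are, in the paper, the citations \cite[I.2.3]{BR} and \cite[I.2.13]{BR}, which you instead attempt to re-derive. Those re-derivations are sound as far as they go, but your closing paragraph introduces a genuine gap. You assert that a torsion pair in the sense of \cite[I.2.1]{BR} ``need not in general have a torsion class closed under suspension,'' and you repair this by restricting attention to torsion pairs arising from class~(ii). That is a misreading of the definition: the closure conditions $\Sigma\cX\subseteq\cX$ and $\Sigma^{-1}\cY\subseteq\cY$ are among the axioms of \cite[I.2.1]{BR}. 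Nor is the axiom dispensable: without it the theorem would actually be false, since co-$t$-structures (weight structures) satisfy Hom-vanishing, summand-closure and the decomposition-triangle axiom while failing to be $t$-structures, so no bijection between (iii) and (iv) could exist. Consequently, as written, your correspondence touches class (iii) only on the subclass of torsion pairs satisfying $\cL=\cC^{\,\rsemiperp}$ --- in your (iii)$\to$(ii) direction you simply hypothesize this equality --- and you explicitly leave open whether that subclass exhausts (iii). What you prove is therefore a bijection between (i), (ii), (iv) and a possibly proper subclass of (iii), which is not the statement.

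The missing step is exactly what \cite[I.2.3]{BR} supplies (and why the paper cites it at this point): for any torsion pair $(\cC,\cL)$, the axioms force $\cL=\cC^{\,\rsemiperp}$ and $\cC={}^{\lsemiperp}\cL$. Indeed, the suspension axiom gives $\cL\subseteq\cC^{\,\rsemiperp}$; conversely, if $\cT(\Sigma^k C,Z)=0$ for all $C\in\cC$ and $k\ge 0$, then in a decomposition triangle $CZ\to Z\to LZ\to\Sigma CZ$ the first map vanishes, so the triangle splits, $LZ\cong Z\oplus\Sigma CZ$, and $Z$ lies in $\cL$ because the classes of a torsion pair are closed under direct summands; the identification $\cC={}^{\lsemiperp}\cL$ is dual. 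Once this is inserted, every torsion pair is of the kind your argument handles: your (ii)$\leftrightarrow$(iii) passage applies to all of (iii), your reindexing dictionary between torsion pairs and $t$-structures becomes a genuine bijection between (iii) and (iv), and the proof closes, in agreement with the paper's.
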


\begin{proof}
The bijective correspondence between (i) and (ii) has been established
in Theorem~\ref{ref-coref}. The bijective correspondence between torsion pairs and $t$\nobreakdash-structures
is proved in \cite[I.2.13]{BR}.
If $\cC$ is a coreflective semilocalizing subcategory, then, by Proposition~\ref{coreflecting},
$(\cC,\cC^{\,\rsemiperp})$ is a torsion pair. Conversely, if $(\cX,\cY)$ is a torsion pair,
then \cite[I.2.3]{BR} tells us that $\cX$ is coreflective and semilocalizing, while $\cY$
is reflective and semicolocalizing.
\end{proof}

\subsection{Tensor triangulated categories}
\label{LOCID}

To conclude this introductory section, let $\cT$ be a \emph{tensor triangulated category},
in the sense of \cite{Ba}, \cite{HPS}, \cite{S}.
More precisely, we assume that
$\cT$ has a closed symmetric monoidal structure with a unit object~$S$, tensor product 
denoted by $\wedge$ and internal hom $F(-,-)$, compatible with
the triangulated structure and such that $\cT(X,F(Y,Z))\cong\cT(X\wedge Y,Z)$
naturally in all variables; cf.\ \cite[A.2.1]{HPS}.

Then a full subcategory $\cC$ of $\cT$ is called an
\emph{ideal} if $E\wedge X$ is in~$\cC$ for every $X$ in~$\cC$ 
and all $E$ in~$\cT$, and a full subcategory
$\cL$ is called a \emph{coideal} if $F(E,X)$ is in~$\cL$
for every $X$ in~$\cL$ and all $E$ in~$\cT$. 
A~\emph{localizing ideal} is a localizing subcategory that is also an ideal,
and similarly in the dual case.

In the homotopy category of spectra, all localizing subcategories
are ideals and all colocalizing subcategories are
coideals. As shown in \cite[Lemma 1.4.6]{HPS}, the same happens
in any \emph{monogenic} stable homotopy category (i.e., such that
the unit of the monoidal structure is a small generator).
In \cite{HPS}, for stable homotopy categories, the terms \emph{localization}
and \emph{colocalization} were used in a more restrictive sense
than in the present article. Thus an exact reflection $L$ was called 
a localization in \cite[Definition 3.1.1]{HPS}
if $LX=0$ for an object $X$ implies that $L(E\wedge X)=0$ for every~$E$;
in other words, if the class of $L$\nobreakdash-acyclic objects is a localizing ideal.
Dually, an exact coreflection $C$ was called a colocalization if
$CX=0$ implies that $C(F(E,X))=0$ for every~$E$, i.e., if the class
of $C$\nobreakdash-acyclic objects is a colocalizing coideal.

For each class of objects $\cD$, the class of those $X$ such that $F(X,D)=0$ for all $D\in\cD$
is a localizing ideal, and the class of those $Y$ such that $F(D,Y)=0$ for all $D\in\cD$
is a colocalizing coideal.
If $\cC$ is a localizing ideal, then $\cC^{\boldperp}$ is a colocalizing coideal,
and, if $\cL$ is a colocalizing coideal, then ${}^{\boldperp}\cL$ is a localizing ideal.
Thus, under the bijective correspondence given by Theorem~\ref{ref-coref},
localizations in the sense of \cite{HPS} are also paired with colocalizations.

\section{Reflective colocalizing subcategories}
\label{RCS}

Background on locally presentable and accessible categories can be found in~\cite{AR} and~\cite{MP}.
The basic definitions are as follows. Let $\lambda$ be a regular cardinal.
A~nonempty small category is \emph{$\lambda$\nobreakdash-filtered}
if, given any set of objects $\{A_i\mid i\in I\}$ where $|I|<\lambda$, there is an object $A$ and a
morphism $A_i\to A$ for each $i\in I$, and, moreover, given any set of parallel arrows
between any two objects $\{\varphi_j\colon B\to C \mid j\in J\}$ where $|J|<\lambda$, there is a morphism
$\psi\colon C\to D$ such that $\psi\circ\varphi_j$ is the same morphism for all $j\in J$.

Let $\cK$ be any category. A diagram $D\colon I\to\cK$ where $I$ is a $\lambda$\nobreakdash-filtered
small category is called a \emph{$\lambda$\nobreakdash-filtered diagram}, and, if $D$ has a colimit, then
$\colim_I D$ is called a
\emph{$\lambda$\nobreakdash-filtered colimit}. An object $X$ of $\cK$ is \emph{$\lambda$\nobreakdash-presentable}
if the functor $\cK(X,-)$ preserves $\lambda$\nobreakdash-filtered colimits.
The category $\cK$ is \emph{$\lambda$\nobreakdash-accessible}
if all $\lambda$\nobreakdash-filtered colimits exist in~$\cK$
and there is a set $\cS$ of $\lambda$\nobreakdash-presentable objects such that every object of $\cK$ is a 
$\lambda$\nobreakdash-filtered colimit of objects from~$\cS$. It is called
\emph{accessible} if it is $\lambda$\nobreakdash-accessible for some~$\lambda$.
A cocomplete accessible category is called \emph{locally presentable}.

Thus, the passage from locally presentable to 
accessible categories amounts to weakening the assumption
of cocompleteness by imposing only that enough colimits exist.
As explained in \cite[\S~2.1]{AR}, using directed colimits instead of filtered
colimits in the definitions leads to the same concepts of accessibility and
local presentability.

As explained in \cite[6.3]{AR}, \emph{Vop\v{e}nka's principle} 
is equivalent to the statement
that, \emph{given any family of objects $X_s$ of an accessible category indexed
by the class of all ordinals, there is a morphism $X_s\to X_t$ for some ordinals $s<t$.}

A functor $\gamma\colon\cK\to\cT$ between two categories 
will be called \emph{essentially surjective on sources}
if, for every object $X$ and every collection of morphisms $\{f_i\colon X\to X_i\mid i\in I\}$ in~$\cT$ 
(where $I$ is any discrete category, possibly a proper class),
there is an object $K$ and a collection of morphisms $\{g_i\colon K\to K_i\mid i\in I\}$ in~$\cK$ together
with isomorphisms $h\colon\gamma K\to X$ and $h_i\colon\gamma K_i\to X_i$ for all $i$
rendering the following diagram commutative:
\begin{equation}
\label{sources}
\xymatrix{
\gamma K \ar[d]_{h}^{\cong} \ar[r]^{\gamma g_i} & \gamma K_i \ar[d]^{h_i}_{\cong} \\
X \ar[r]_{f_i} & X_i \rlap{.}
}
\end{equation}
For example, if $\cK$ is a model category \cite{Ho1} and
$\gamma\colon\cK\to\Ho(\cK)$ is the canonical functor onto the corresponding homotopy
category (which we assume to be the identity on objects), 
then $\gamma$ is essentially surjective on sources ---and also on sinks;
here ``sources'' and ``sinks'' are meant as in~\cite{AHS}. Indeed, 
given a collection $\{f_i\colon X\to X_i\mid i\in I\}$ in~$\Ho(\cK)$, we can choose
a cofibrant replacement $q\colon K\to X$ and a fibrant replacement $r_i\colon X_i\to K_i$
for each~$i$. Then we can pick a morphism $g_i\colon K\to K_i$ for each~$i$ such that
the zig-zag
\[
\xymatrix{
X & K\ar[l]_q\ar[r]^{g_i} & K_i & X_i\ar[l]_{r_i}
}
\]
represents $f_i$. Then the choices $h=\gamma q$ and $h_i=(\gamma r_i)^{-1}$ render \eqref{sources} commutative.

\begin{theo}
\label{thm1}
Let $\cT$ be a category with products and suppose given a
functor $\gamma\colon\cK\to\cT$ where $\cK$ is accessible and $\gamma$ is essentially surjective on sources.
If Vop\v{e}nka's principle holds, then every full subcategory \mbox{$\cL\subseteq\cT$}
closed under products is weakly reflective.
\end{theo}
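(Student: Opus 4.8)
The plan is to construct, for a fixed object $X\in\cT$, a weak reflection $l_X\colon X\to X^*$ with $X^*\in\cL$ by exploiting Vop\v{e}nka's principle to bound the size of a generating family of morphisms out of~$X$ into~$\cL$. First I would consider the (possibly proper) class $\Phi$ of all morphisms $f\colon X\to Y$ with $Y\in\cL$; the goal is a single morphism through which every such $f$ factors. The natural candidate for $X^*$ is a product of objects of~$\cL$ receiving maps from~$X$, together with the diagonal-type morphism $X\to X^*$, since $\cL$ is closed under products. The difficulty is that $\Phi$ may be a proper class, so one cannot literally take the product over all of~$\Phi$; the role of Vop\v{e}nka's principle is precisely to reduce this proper class to a set.

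The key step is the following argument by contradiction. Suppose no weak reflection exists at~$X$. Then I would build, by transfinite recursion on the ordinals, a family $\{f_s\colon X\to Y_s\mid s\in\Ord\}$ of morphisms into objects of~$\cL$ with the property that $f_t$ does \emph{not} factor through the morphism $X\to\prod_{s<t}Y_s$ assembled from the earlier ones; the failure of weak reflectivity at each stage guarantees such an $f_t$ exists. Next I would transport this family into the accessible category~$\cK$ using that $\gamma$ is essentially surjective on sources: for each ordinal $t$, the source $\{X\to Y_s\mid s\le t\}$ lifts to a source $\{K_t\to L_{s}^{(t)}\}$ in~$\cK$ with $\gamma K_t\cong X$, producing a family of objects $\{K_t\mid t\in\Ord\}$ in~$\cK$ indexed by all ordinals. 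Here I must be careful to set up the lifted data so that a morphism $K_t\to K_{t'}$ in~$\cK$ would yield, after applying~$\gamma$ and composing with the structural isomorphisms, a factorization in~$\cT$ of the kind we arranged to exclude.

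Vop\v{e}nka's principle, in the form stated in the excerpt (every $\Ord$\nobreakdash-indexed family of objects of an accessible category admits a morphism $X_s\to X_t$ for some $s<t$), then supplies a morphism $K_s\to K_t$ in~$\cK$ for some $s<t$. Applying~$\gamma$ and chasing the commuting squares~\eqref{sources} back down to~$\cT$, this morphism forces $f_t$ to factor through the product $\prod_{r<t}Y_r$ (or through $Y_s$), contradicting the defining property of the recursively chosen family. This contradiction shows the recursion must terminate at some ordinal~$\mu$, i.e., that the maps $\{f_s\mid s<\mu\}$ already form a weakly reflecting family, so that $l_X\colon X\to\prod_{s<\mu}Y_s$ is the desired weak reflection with target in~$\cL$ (using closure of~$\cL$ under products).

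I expect the main obstacle to be the bookkeeping that makes the lifts to~$\cK$ compatible across ordinals and ensures that a single morphism $K_s\to K_t$ in~$\cK$ really does produce the forbidden factorization in~$\cT$. Because $\gamma$ need not be full or faithful, one cannot simply transport morphisms back and forth; the essential-surjectivity-on-sources hypothesis must be used to encode enough of the cofactoring data \emph{inside} each object $K_t$ (by including the comparison maps to all earlier $Y_s$ as part of the source being lifted) so that any $\cK$\nobreakdash-morphism between two such objects is forced, after applying~$\gamma$, to respect those comparison maps up to the fixed isomorphisms. Getting the indexing and the structural isomorphisms $h,h_i$ to align coherently so that the final diagram chase closes is the delicate part; once that is arranged, the invocation of Vop\v{e}nka's principle and the resulting contradiction are straightforward.
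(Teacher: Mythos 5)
Your overall architecture (a transfinite construction of an $\Ord$-indexed family of maps into $\cL$ admitting no factorizations, transport into $\cK$, then Vop\v{e}nka's principle) matches the paper's, and your recursion producing $f_t\colon X\to Y_t$ not factoring through $X\to\prod_{s<t}Y_s$ is a fine, even slightly more direct, substitute for the paper's device of writing $\cL$ as an increasing union of closures under products of small subcategories. But the step where you invoke Vop\v{e}nka's principle has a genuine gap. You lift, for each ordinal $t$ \emph{separately}, the source $\{X\to Y_s\mid s\le t\}$ to a source with domain $K_t$ in $\cK$, and then apply Vop\v{e}nka's principle to the family $\{K_t\mid t\in\Ord\}$ in $\cK$ itself. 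The morphism $K_{t'}\to K_t$ that this produces is a bare morphism of $\cK$: after applying $\gamma$ and conjugating by the structural isomorphisms it yields only some morphism $X\to X$ in $\cT$, with no compatibility whatsoever with the maps $f_s$ or their lifts. Your hoped-for remedy --- encoding the comparison maps ``inside'' each $K_t$ so that any $\cK$-morphism between the $K_t$'s is forced to respect them --- cannot be realized: being equipped with a cone of maps is not structure that an arbitrary morphism of $\cK$ sees, so nothing excludes useless morphisms $K_{t'}\to K_t$ (for instance ones factoring through a zero object, if there is one), and no contradiction results.

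The paper closes exactly this gap with two moves absent from your proposal. First, the definition of ``essentially surjective on sources'' explicitly allows the indexing to be a proper class, so one lifts the \emph{entire} $\Ord$-indexed source $\{f_t\colon X\to X_t\}$ at once, obtaining a single object $K$ with $\gamma K\cong X$ and morphisms $g_t\colon K\to K_t$ all sharing the same domain $K$. Second, Vop\v{e}nka's principle is applied not in $\cK$ but in the comma category $(K\downarrow\cK)$, which is accessible by \cite[2.44]{AR}; the family fed to the principle consists of the objects $g_{t}$, and a morphism $g_{t'}\to g_t$ in this category is by definition a morphism $G\colon K_{t'}\to K_t$ satisfying $G\circ g_{t'}=g_t$. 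Only because this commutation is built into the category to which the principle is applied does the resulting morphism transport, via the squares \eqref{sources}, to a factorization $F\circ f_{t'}=f_t$ in $\cT$, contradicting the construction. With these two corrections (class-indexed lifting with a common domain, and Vop\v{e}nka's principle applied in the comma category) your argument goes through; as written, it does not close.
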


\begin{proof}
Write $\cL$ as the union of an ascending chain of full subcategories
indexed by the ordinals,
\[ \cL=\bigcup_{i\in\Ord} \cL_i, \]
where each $\cL_i$ is the closure under products of a small subcategory~$\cA_i$.
For each object $X$ of~$\cT$, let $X_i$ be the product of
the codomains of all morphisms from $X$ to objects of~$\cA_i$, and let
$f_i\colon X\to X_i$ be the induced morphism. Then 
every morphism from $X$ to some object of $\cL_i$ factors through $f_i$ and
hence $f_i$ is a weak reflection of $X$ onto~$\cL_i$.

Now, as in \cite[6.26]{AR}, in order to prove that $\cL$
is weakly reflective, it suffices to find an ordinal $i$ such
that, for all $j\ge i$, the morphism $f_j$ can be factorized as $f_j={\varphi}_{ij}\circ f_i$
for some $\varphi_{ij}\colon X_i\to X_j$. In other words,
\[ (X\downarrow\cL)(f_i,f_j)\ne \emptyset \]
for all $j\ge i$, where $(X\downarrow\cL)$ denotes the comma category of $\cL$ under~$X$.
Suppose the contrary. Then there are ordinals
$i_0 < i_1 < \cdots < i_s < \cdots$,
where $s$ ranges over all the ordinals, such that
\begin{equation}
\label{nomaps}
(X\downarrow\cL)(f_{i_s},f_{i_t})=\emptyset
\end{equation}
if $s < t$.
Since $\gamma$ is essentially surjective on sources,
there is a morphism $g_i\colon K\to K_i$ in $\cK$ for each ordinal~$i$,
and there are isomorphisms $h\colon \gamma K\to X$ and $h_i\colon\gamma K_i\to X_i$
such that $f_i\circ h=h_i\circ\gamma g_i$ for all~$i$.
Then $(K\downarrow\cK)(g_{i_s},g_{i_t})=\emptyset$
if $s < t$, since, if there is a morphism $G\colon K_{i_s}\to K_{i_t}$ with $G\circ g_{i_s}=g_{i_t}$,
then $F=h_{i_t}\circ\gamma G\circ (h_{i_s})^{-1}$ satisfies $F\circ f_{i_s}=f_{i_t}$,
contradicting~\eqref{nomaps}.
Since the category $(K\downarrow\cK)$ is accessible by~\cite[2.44]{AR},
this is incompatible with Vop\v{e}nka's principle, according to~\cite[6.3]{AR}.
\end{proof}

We next show that the existence of a weak reflection implies the
existence of a reflection under assumptions that do not require
any further input from large-cardinal theory.
We say that \emph{idempotents split} in a category $\cT$ if
for every morphism $e\colon A\to A$ such that $e\circ e=e$
there are morphisms $f\colon A\to B$ and $g\colon B\to A$ such that
$e=g\circ f$ and $f\circ g=\id$. This is automatic in a category
with coequalizers, since $f$ can be chosen to be a coequalizer
of $e$ and the identity, and $g$ is determined by the universal property
of the coequalizer. It also holds in other important cases;
for instance, by \cite[Proposition 1.6.8]{N1}, idempotents
split in any triangulated category with countable coproducts
or countable products.

We will need the following result, which, 
as pointed out to us by Chorny, can be derived from \cite[VII.28H]{HS}.
A~self-contained proof is given here for the sake of completeness.

Recall that a \emph{weak limit} of a diagram $D\colon I\to \cT$,
where $\cT$ is any category and $I$ is a small category, 
is an object $X$ of $\cT$ together with a natural transformation
$\nu\colon X\to D$ (where $X$ is seen as a constant functor, so $\nu$ is a
cone to~$D$) such that any other natural
transformation $Y\to D$ with $Y$ in $\cT$ factorizes through~$\nu$,
not necessarily in a unique way. Weak colimits are defined dually.

\begin{theo}\label{thm2}
Let $\cT$ be a category with products where idempotents split.
Let $\cL$ be a weakly reflective subcategory of $\cT$
closed under retracts, and assume that
every pair of parallel arrows in $\cL$ has a weak equalizer
that lies in~$\cL$. Then $\cL$ is reflective.
\end{theo}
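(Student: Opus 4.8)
The plan is to show that each object $X$ admits a weak reflection $\rho\colon X\to RX$ onto $\cL$ that is \emph{reduced}, in the sense that the only endomorphism $d$ of $RX$ satisfying $d\circ\rho=\rho$ is the identity, and then to observe that a reduced weak reflection is automatically a reflection. For the latter, suppose $\rho$ is reduced and that $a,b\colon RX\to Y$ satisfy $a\circ\rho=b\circ\rho$ with $Y\in\cL$. I would take a weak equalizer $v\colon V\to RX$ of $a$ and $b$ inside $\cL$ (this is where the weak-equalizer hypothesis enters, since $a,b$ form a parallel pair in $\cL$). As $a\circ\rho=b\circ\rho$, the morphism $\rho$ factors as $\rho=v\circ\rho'$; and since $\rho$ is a weak reflection and $V\in\cL$, the morphism $\rho'$ factors as $\rho'=\sigma\circ\rho$. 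Then $v\circ\sigma$ is an endomorphism of $RX$ with $(v\circ\sigma)\circ\rho=\rho$, so reducedness forces $v\circ\sigma=\id$; thus $v$ is a split epimorphism, and $a\circ v=b\circ v$ yields $a=b$. Hence factorizations through $\rho$ are unique and $\rho$ is a reflection.

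It then remains to manufacture a reduced weak reflection out of an arbitrary one $l_X\colon X\to X^*$. Let $E$ be the set of endomorphisms $e$ of $X^*$ with $e\circ l_X=l_X$; this is a submonoid of $\cT(X^*,X^*)$ containing the identity. Since $\cL$ is weakly reflective and closed under retracts, it is closed under products that exist in $\cT$, so $\prod_{e\in E}X^*$ lies in $\cL$. I would then form, inside $\cL$, a weak equalizer $w\colon W\to X^*$ of the two parallel morphisms $X^*\to\prod_{e\in E}X^*$ whose $e$-th components are the identity and $e$ respectively. By construction $e\circ w=w$ for every $e\in E$, and because $e\circ l_X=l_X$ for every $e$, the morphism $l_X$ factors as $l_X=w\circ\eta$ for some $\eta\colon X\to W$. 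Applying weak reflectivity to $\eta$ produces $s\colon X^*\to W$ with $\eta=s\circ l_X$.

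Next I would check that $q=s\circ w$ is an idempotent endomorphism of $W$ fixing $\eta$: one verifies $w\circ s\in E$, hence $w\circ s\circ w=w$ and $q\circ q=q$, while $q\circ\eta=s\circ l_X=\eta$. Since idempotents split, write $q=j\circ r$ with $r\circ j=\id$, where $r\colon W\to R$ and $j\colon R\to W$; then $R\in\cL$ as a retract of $W$, and $\rho=r\circ\eta\colon X\to R$ is again a weak reflection onto $\cL$ (any $f\colon X\to Y$ with $Y\in\cL$ factors through $\eta$, hence through $\rho$ since $q=j\circ r$). The crux is to prove that $\rho$ is reduced. Given $d\colon R\to R$ with $d\circ\rho=\rho$, the endomorphism $w\circ j\circ d\circ r\circ s$ of $X^*$ lies in $E$, so it is absorbed by $w$; using the relations $s\circ w=j\circ r$ and $r\circ j=\id$ this collapses successively to $j\circ d\circ r=j\circ r$, then to $d\circ r=r$, and finally to $d=\id$.

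The main obstacle is precisely this last verification. A single naive application of weak reflectivity only shows that two competing factorizations agree after precomposition with $\rho$, which is circular and does not by itself give uniqueness; the device that breaks the circle is to equalize \emph{all} of $E$ at once, via the product into $\prod_{e\in E}X^*$, and only then split the resulting idempotent. This is what forces the constructed $\rho$ to be reduced in one step, so that no transfinite iteration is needed. A minor but necessary point is the set-theoretic one that $E$ is genuinely a set (the ambient categories are locally small), which is what allows the product $\prod_{e\in E}X^*$ to be formed and to remain in $\cL$.
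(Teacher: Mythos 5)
Your proof is correct and follows essentially the same route as the paper's: you bundle the endomorphisms fixing the weak reflection into a single parallel pair via a product into $\prod_{e\in E}X^*$, take a weak equalizer lying in $\cL$, split the resulting idempotent, and then prove uniqueness of factorizations by showing that a second weak equalizer is a split epimorphism. The only difference is organizational --- you isolate the intermediate notion of a ``reduced'' weak reflection, while the paper runs the same two applications of the weak-equalizer hypothesis (with its index set $I$ of pairs used only in the form $(f,\mathrm{id})$, exactly your $E$) in a single combined argument.
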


\begin{proof}
Note first that, by \cite[Remark~4.5(3)]{AR}, since $\cL$ is weakly reflective
and closed under retracts, it is also closed under products. Recall also that
reflective or weakly reflective subcategories are tacitly assumed to be full. 

Let $A$ be any object of $\cT$ and let $r_0\colon A\to A_0$ be
a weak reflection of $A$ onto~$\cL$.
Let $I$ denote the set of all pairs of morphisms $(f,g)\colon A_0\rightrightarrows A_0$
such that $f\circ r_0=g\circ r_0$, and let
$u_1\colon A_1\to A_0$ be a weak equalizer of the pair
$(\prod_{i\in I}f_i,\,\prod_{i\in I}g_i)\colon A_0\rightrightarrows\prod_{i\in I}A_0$.
By hypothesis, we may choose $u_1$ in~$\cL$.

Since $u_1$ is a weak equalizer, there is a morphism $r_1\colon A\to A_1$ such
that $u_1\circ r_1=r_0$. Moreover, since $r_0$ is a weak reflection
and $A_1$ is in $\cL$, there is a morphism $t_1\colon A_0\to A_1$ such that
$t_1\circ r_0=r_1$. Then $(u_1\circ t_1,\,{\rm id})\in I$ and hence
$u_1\circ t_1\circ u_1 = u_1$.
It~follows that $t_1\circ u_1$ is idempotent and hence it splits.
That is, there are morphisms
$u_2\colon A_2\to A_1$ and $t_2\colon A_1\to A_2$
such that $u_2\circ t_2= t_1\circ u_1$ and $t_2\circ u_2=\id$.

We next prove that, if we pick $r_2=t_2\circ r_1$, then $r_2$ is a reflection of $A$ onto~$\cL$.
First of all, $A_2$ is a retract of $A_1$ and hence $A_2$ is in $\cL$.
Second, from the equality $r_0=u_1\circ u_2\circ r_2$ it follows that
$r_2$ is a weak reflection of $A$ onto~$\cL$.
Now, given a morphism $f\colon A\to X$ with $X$ in $\cL$, since $r_2$
is a weak reflection, there is a morphism $g\colon A_2\to X$
such that $g\circ r_2=f$. Suppose that there is another
$h\colon A_2\to X$ with $h\circ r_2=g\circ r_2$. Let $w\colon B\to A_2$
be a weak equalizer of $g$ and $h$ with $B$ in $\cL$. Then, as we next
show, $w$ has a right inverse, so the equality $g\circ w=h\circ w$
implies that $g=h$, as needed.

In order to prove that $w$ has a right inverse, note that, since
$h\circ r_2=g\circ r_2$, there is a morphism $t\colon A \to B$
with $w\circ t=r_2$. Since $r_0$
is a weak reflection of $A$ onto~$\cL$, there is a morphism $s\colon A_0\to B$
such that $s\circ r_0=t$.
Now $u_1\circ u_2 \circ w \circ s \circ r_0=r_0$; hence $(u_1\circ u_2\circ w\circ s,\,{\rm id})\in I$,
from which it follows that $u_1\circ u_2 \circ w \circ s \circ u_1=u_1$.
Finally, note that $t_2\circ t_1\circ u_1\circ u_2=t_2\circ u_2\circ t_2\circ u_2={\rm id}$
and therefore $w\circ s\circ u_1=t_2\circ t_1\circ u_1=t_2$, so $w\circ s\circ u_1\circ u_2={\rm id}$,
as claimed.
\end{proof}

\begin{coro}
\label{refinement}
Every weakly reflective subcategory closed under retracts and fibres in a triangulated
category with products is reflective.
\end{coro}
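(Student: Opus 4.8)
The plan is to deduce this from Theorem~\ref{thm2}, so I would simply verify its three hypotheses for a triangulated category $\cT$ with products and a weakly reflective subcategory $\cL$ closed under retracts and fibres. The first hypothesis, that idempotents split in~$\cT$, comes for free: since $\cT$ has products it has in particular countable products, and idempotents split in any triangulated category with countable products by \cite[Proposition~1.6.8]{N1}. The second hypothesis, that $\cL$ is weakly reflective and closed under retracts, is exactly part of the assumption. Hence the only nontrivial point is the third hypothesis, namely that every pair of parallel arrows in~$\cL$ admits a weak equalizer lying in~$\cL$.

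The key construction exploits the additivity of the triangulated structure. Given parallel morphisms $f,g\colon A\to B$ with $A$ and $B$ in~$\cL$, I would form a triangle
\[
\xymatrix{
E\ar[r]^{w} & A\ar[r]^-{f-g} & B\ar[r] & \Sigma E,
}
\]
that is, choose $E$ to be a fibre of the difference $f-g$. Then $(f-g)\circ w=0$, so $f\circ w=g\circ w$, and $w$ is a cone over the pair $(f,g)$. To see that it is a \emph{weak} equalizer, I would take any morphism $k\colon W\to A$ with $f\circ k=g\circ k$; then $(f-g)\circ k=0$, and the defining property of the triangle produces a (not necessarily unique) factorization $k=w\circ k'$, which is precisely the weak universal property required.

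Finally, the weak equalizer lies in~$\cL$ for the expected reason: in the triangle above, the two middle terms $A$ and $B$ belong to~$\cL$, and $\cL$ is closed under fibres, so $E\in\cL$. With all three hypotheses verified, Theorem~\ref{thm2} yields that $\cL$ is reflective. I expect no serious obstacle here; the only subtlety is in the weak equalizer property, where one must recall that factorizations through a fibre are in general non-unique---which is exactly why one obtains a \emph{weak} (rather than a strict) equalizer, and this is all that Theorem~\ref{thm2} demands.
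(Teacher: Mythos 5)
Your proof is correct and is essentially the paper's own argument: the paper likewise deduces the corollary from Theorem~\ref{thm2} by observing that a fibre of $f-g$ serves as a weak equalizer of $f$ and $g$ (lying in $\cL$ by closure under fibres), and that idempotents split because countable products exist. The extra detail you supply---the exactness argument giving the non-unique factorization through the fibre---is exactly the standard justification the paper leaves implicit.
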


\begin{proof}
This is implied by Theorem~\ref{thm2}, since a fibre of $f-g$ is
a weak equalizer of two given parallel arrows $f$ and~$g$,
and idempotents split in a triangulated category if countable products exist,
according to \cite[Remark~1.6.9]{N1}.
\end{proof}

Dually, every weakly coreflective subcategory closed under retracts and cofibres
in a triangulated category $\cT$ with coproducts is coreflective. Neeman
proved this fact in \cite[Proposition~1.4]{N3} for thick subcategories,
without assuming the existence of coproducts in~$\cT$,
but imposing that idempotents split in~$\cT$.

Putting together Theorem~\ref{thm1} and Theorem~\ref{thm2}, we state the main result of this section.
A model category $\cK$ is called \emph{stable} \cite[2.1.1]{SS} if it is pointed (i.e., the
unique map from the initial object to the terminal object is an isomorphism)
and the suspension and loop operators are inverse equivalences on the
homotopy category $\Ho(\cK)$. It then follows that $\Ho(\cK)$ is
triangulated, where the triangles come from fibre or cofibre sequences in~$\cK$ (see \cite[6.2.6]{Ho1}), 
and has products and coproducts over arbitrary index sets, coming
from those of~$\cK$. In fact, $\Ho(\cK)$ has weak limits and
colimits, but it is neither complete nor cocomplete in general; see~\cite[2.2]{HPS}.
Quillen equivalences of stable model categories preserve fibre and cofibre sequences and hence
the triangulated structure of~$\Ho(\cK)$.

\begin{theo}
\label{mainthm1}
Let $\cK$ be a locally presentable category with a stable model category structure.
If Vop\v{e}nka's principle holds, then every full subcategory $\cL$ of $\Ho(\cK)$ closed under 
products and fibres is reflective. 
If $\cL$ is semicolocalizing, then the reflection is semiexact. 
If $\cL$ is colocalizing, then the reflection is exact.
\end{theo}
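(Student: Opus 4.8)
The plan is to obtain reflectivity of $\cL$ by combining the two main theorems of this section, Theorem~\ref{thm1} (which produces a weak reflection under Vop\v{e}nka's principle) and Corollary~\ref{refinement} (which promotes a weak reflection to an honest reflection), and then to read off semiexactness and exactness directly from the closure properties of $\cL$ together with the definitions of Subsection~\ref{LACS}. First I would put $\cT=\Ho(\cK)$ and take $\gamma\colon\cK\to\Ho(\cK)$ to be the canonical functor. Since $\cK$ is locally presentable it is in particular accessible, and $\gamma$ is essentially surjective on sources, as explained just before Theorem~\ref{thm1}. Because $\cK$ carries a stable model structure, $\cT$ is triangulated and has arbitrary products and coproducts. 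Hence, for any $\cL$ closed under products, Theorem~\ref{thm1} together with Vop\v{e}nka's principle shows that $\cL$ is weakly reflective.

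Next I would check that $\cL$ is closed under retracts, which, together with closure under fibres, is exactly what Corollary~\ref{refinement} requires. Here I would use closure under products \emph{and} fibres as follows. Given an object $A$ that is a retract of some $B\in\cL$, with associated idempotent $e\colon B\to B$, I would realize $A$ as the homotopy limit of the constant tower $\cdots\xrightarrow{e}B\xrightarrow{e}B$, that is, as the fibre of the morphism $1-\sigma\colon\prod_{n}B\to\prod_{n}B$, where $\sigma$ is built from the shift on the countable product $\prod_{n}B$ and the idempotent $e$. By the dual of \cite[Proposition~1.6.8]{N1} this fibre is precisely the image of $e$, hence isomorphic to $A$. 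Since $\prod_{n}B\in\cL$ by closure under products, and $\cL$ is closed under fibres, it follows that $A\in\cL$. With closure under retracts and fibres established, Corollary~\ref{refinement} (whose hypothesis that idempotents split holds in $\cT$ because countable products exist) shows that $\cL$ is reflective; write $L$ for the resulting reflection.

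For the two refinements I would then argue purely formally, observing that the class of $L$\nobreakdash-local objects coincides with $\cL$. Thus, if $\cL$ is semicolocalizing, then $L$ is semiexact by the very definition of a semiexact reflection; and if $\cL$ is colocalizing, then, being triangulated, $\cL$ is in particular closed under $\Sigma$, so that $L$ is exact, again by definition (and consistently with condition~(ii) of Theorem~\ref{exactlocs}, since $L$ is already semiexact).

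The main obstacle is the closure-under-retracts step. Weak reflectivity and the upgrade to a reflection are direct applications of the cited results, and semiexactness and exactness then drop out from the definitions, but showing that splitting an idempotent keeps us inside $\cL$ is where genuine content lies: it requires the explicit homotopy-limit model of the retract and the combined use of closure under products and under fibres, since $\cL$ is not assumed to be triangulated and so the standard retract-closure lemmas for triangulated subcategories do not apply verbatim.
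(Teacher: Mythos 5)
Your proposal is correct and follows essentially the same route as the paper: Theorem~\ref{thm1} for weak reflectivity, closure under retracts, then Corollary~\ref{refinement}, with semiexactness and exactness read off from the definitions. The retract-closure step you work out in detail---realizing the retract as the fibre of $1-\sigma$ on a countable product---is precisely the (dual) Eilenberg swindle that the paper invokes by citing \cite[Lemma~1.4.9]{HPS}, so the two arguments coincide.
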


\begin{proof}
The canonical functor $\gamma\colon \cK\to \Ho(\cK)$ is
essentially surjective on sources and sinks. Hence $\gamma$ satisfies the assumptions of Theorem~\ref{thm1},
from which it follows that $\cL$ is weakly reflective. 
Closure of $\cL$ under retracts follows from the Eilenberg swindle, as in \cite[Lemma 1.4.9]{HPS}.
Then Corollary~\ref{refinement} implies that $\cL$ is in fact reflective.
The other statements hold by the definitions of the terms involved.
\end{proof}

We do not know if the assumption that $\cL$ be closed under fibres is necessary
for the validity of Theorem~\ref{mainthm1}. We note however that an important kind
of reflective subcategories, namely classes of $f$\nobreakdash-local objects
in the sense of \cite{DF} or \cite{Hi} in homotopy categories of suitable model categories, 
are closed under fibres. 

Under the assumptions of Theorem~\ref{mainthm1},
if $\Ho(\cK)$ is tensor triangulated and
a given colocalizing subcategory $\cL\subseteq\Ho(\cK)$ is a coideal,
then the exact reflection $L$
given by Theorem~\ref{mainthm1} is automatically a localization in the sense of~\cite{HPS}, 
that is, the class of $L$\nobreakdash-acyclic objects is then a localizing ideal.

\begin{coro}
\label{closed_implies_coreflective}
Let $\cK$ be a locally presentable stable model category.
If Vop\v{e}nka's principle holds, then every closed semilocalizing subcategory 
of $\Ho(\cK)$ is coreflective.
\end{coro}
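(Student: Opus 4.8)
The plan is to deduce the statement from the reflection theorem already proved, namely Theorem~\ref{mainthm1}, by applying it not to $\cC$ itself but to its right semiorthogonal complement, and then to convert the resulting reflection into a coreflection onto $\cC$ by means of the bijection of Theorem~\ref{ref-coref}. First I would unwind the hypothesis that $\cC$ is a \emph{closed} semilocalizing subcategory: by the definition recalled in Section~\ref{OR}, this means exactly that $\cC={}^{\lsemiperp}(\cC^{\,\rsemiperp})$. I then set $\cL=\cC^{\,\rsemiperp}$. As observed in the same subsection, for any class of objects the right semiorthogonal complement is semicolocalizing, so $\cL$ is a semicolocalizing subcategory of $\Ho(\cK)$; in particular it is closed under products and under fibres.

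With this reduction in place, the main input enters. Since $\cK$ is a locally presentable stable model category and Vop\v{e}nka's principle is assumed, Theorem~\ref{mainthm1} applies verbatim to the subcategory $\cL$: it guarantees that $\cL$ is reflective, and, because $\cL$ is semicolocalizing, that the reflection $L$ onto $\cL$ is semiexact. Here the class of $L$\nobreakdash-local objects is precisely~$\cL$. I would then invoke Theorem~\ref{ref-coref}: the semiexact reflection $L$ is paired with a semiexact coreflection $C$, and part~(iii) of that theorem identifies the class of $C$\nobreakdash-colocal objects with ${}^{\lsemiperp}\cL$. Since $\cC$ is closed, ${}^{\lsemiperp}\cL={}^{\lsemiperp}(\cC^{\,\rsemiperp})=\cC$, so $C$ is a coreflection onto~$\cC$, which is the desired conclusion.

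The argument has essentially no internal obstacle of its own: the whole large-cardinal content is absorbed into Theorem~\ref{mainthm1}, and the passage from a reflection on the complement to a coreflection on $\cC$ is the purely formal bijection of Theorem~\ref{ref-coref}, which is available since $\Ho(\cK)$ is a triangulated category with products and coproducts. The one point I would verify most carefully is precisely the bookkeeping around the complement: Theorem~\ref{mainthm1} must be applied to $\cL=\cC^{\,\rsemiperp}$ rather than to $\cC$, and it is exactly the closedness hypothesis ${}^{\lsemiperp}(\cC^{\,\rsemiperp})=\cC$ that ensures the coreflection produced by the bijection lands on $\cC$ itself, and not merely on the possibly larger semilocalizing subcategory ${}^{\lsemiperp}(\cC^{\,\rsemiperp})$. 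Without the word ``closed'' this identification would fail, which is consistent with the fact that the general (not necessarily closed) case, treated later in the paper, requires genuinely more work.
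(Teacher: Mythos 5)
Your proposal is correct and follows essentially the same route as the paper's own proof: pass to $\cL=\cC^{\,\rsemiperp}$, which is semicolocalizing and hence reflective by Theorem~\ref{mainthm1}, then use the bijection of Theorem~\ref{ref-coref} to obtain a coreflection onto ${}^{\lsemiperp}(\cC^{\,\rsemiperp})$, which equals $\cC$ by closedness. Your final remark about where the closedness hypothesis is genuinely needed is exactly the right point of care, and matches the paper's observation that the general case requires the much harder Theorem~\ref{mainthm2}.
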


\begin{proof}
Let $\cC$ be a closed semilocalizing subcategory of $\Ho(\cK)$. Then $\cC^{\,\rsemiperp}$ is a semicolocalizing
subcategory, which is reflective by Theorem~\ref{mainthm1}. Hence ${}^{\lsemiperp}(\cC^{\,\rsemiperp})$
is coreflective by Theorem~\ref{ref-coref}, and it is equal to $\cC$ since $\cC$ is closed by assumption.
\end{proof}

As observed in Subsection~\ref{OR}, if a localizing subcategory $\cC$ is closed, then it is also closed
if viewed as a semilocalizing subcategory. 
Hence, the statement of Corollary~\ref{closed_implies_coreflective} is also true for
closed localizing subcategories.

It would be very interesting to have a counterexample (if there is one) to the statement of Theorem~\ref{mainthm1}
under some set-theoretical assumption incompatible with Vop\v{e}nka's principle. We next give
a partial result in this direction, based on \cite{CSS} and~\cite{DG}, which shows that
Theorem~\ref{thm1} cannot be proved in~ZFC.
This result answers the second part of Open Problem~5 from~\cite[p.~296]{AR}.

\begin{propo}
\label{nomeasurables}
Assuming the nonexistence of measurable cardinals, there is a full subcategory
of the category of abelian groups which is closed under products and
retracts but not weakly reflective.
\end{propo}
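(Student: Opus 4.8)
The plan is to construct the subcategory $\cL$ by hand from a sufficiently rigid proper class of slender abelian groups, and then to contradict the definition of weak reflectivity at the single object $\Z$. First I would record the two inputs, both of which are available in ZFC once measurable cardinals are excluded, and which is what the combination of \cite{CSS} and \cite{DG} is meant to supply. The structural input is a \emph{fully rigid proper class} of slender groups: a family $\{A_\alpha\}_{\alpha\in\Ord}$ of nonzero slender abelian groups of strictly increasing cardinalities with $\Hom(A_\alpha,A_\beta)=0$ whenever $\alpha\neq\beta$. (Rigid systems exist in ZFC by Shelah's constructions, and the members may be taken slender because, with no measurable cardinal around, every cardinal is of non-measurable size, so nothing forces the groups to be small.) The homological input is the Łoś--Eda theorem in the form: if $A$ is slender and $I$ is a set of non-measurable cardinality, then $\Hom\bigl(\prod_{i\in I}B_i,A\bigr)\cong\bigoplus_{i\in I}\Hom(B_i,A)$; the absence of measurable cardinals guarantees this applies to \emph{every} index set $I$.

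Next I would let $\cL$ be the closure of $\{A_\alpha\mid\alpha\in\Ord\}$ under products and retracts in $\Ab$, so that $\cL$ consists of all abelian groups that are retracts of products $\prod_{i\in I}A_{\alpha_i}$. This class is full and closed under products and retracts by construction, so everything reduces to showing it is not weakly reflective, and for this I would argue that $\Z$ has no weak reflection. Suppose it did, say $\eta\colon\Z\to Z^*$ with $Z^*\in\cL$. Then $Z^*$ is a retract of some product $\prod_{i\in I}A_{\alpha_i}$ with $I$ a \emph{set}, so only a set $S=\{\alpha_i\mid i\in I\}\subseteq\Ord$ of indices occurs, and I may pick $\beta\in\Ord\setminus S$. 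Applying the Łoś--Eda formula to the slender group $A_\beta$ and the non-measurable set $I$, together with rigidity (each $\Hom(A_{\alpha_i},A_\beta)=0$ since $\alpha_i\neq\beta$), yields
\[
\Hom\Bigl(\prod_{i\in I}A_{\alpha_i},\,A_\beta\Bigr)\cong\bigoplus_{i\in I}\Hom(A_{\alpha_i},A_\beta)=0,
\]
whence $\Hom(Z^*,A_\beta)=0$, because a retract makes $\Hom(Z^*,A_\beta)$ a direct summand of the vanishing group. But $A_\beta\in\cL$ is nonzero, so there is a nonzero $g\colon\Z\to A_\beta$ (pick a nonzero element of $A_\beta$), and the weak reflection property would force a factorization $g=\psi\circ\eta$ with $\psi\in\Hom(Z^*,A_\beta)=0$, giving $g=0$, a contradiction. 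Hence $\cL$ is not weakly reflective.

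The hard part is the homological control in the displayed step: one must be sure that homomorphisms out of an \emph{arbitrary} product of the $A_\alpha$ into a fixed slender $A_\beta$ are finitely supported, detecting only those factors isomorphic to $A_\beta$. This is exactly where the exclusion of measurable cardinals is indispensable, and it is the precise point at which this ZFC obstruction complements the positive Theorem~\ref{thm1}: a measurable cardinal $\kappa$ would furnish extra measure-theoretic homomorphisms out of products indexed by sets of size $\kappa$, which could reinstate the missing factorizations and hence a weak reflection. A secondary technical point, to be extracted from \cite{CSS} and \cite{DG}, is the simultaneous realization of the $A_\alpha$ as rigid, slender, and of unbounded cardinality; once that is in hand the argument above is the whole of the proof.
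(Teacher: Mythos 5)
Your deduction from the two stated inputs is correct, and it is a genuinely different mechanism from the paper's: retract of a product, plus the \L{}o\'{s}--Eda theorem (whose non-measurability hypothesis is exactly what the proposition's assumption supplies), plus rigidity, does force $\Hom(Z^*,A_\beta)=0$ and hence kills any weak reflection of~$\Z$. The genuine gap is your structural input: the existence of a proper class $\{A_\alpha\}_{\alpha\in\Ord}$ of pairwise rigid nonzero \emph{slender} groups. This cannot be waved through, and your justification for it is a non sequitur: slenderness is a structural condition (by Nunke's theorem, $A$ is slender iff it contains no copy of $\Q$, of $\Z/p$, of the $p$-adic integers, or of $\Z^{\omega}$), not a size condition, so the remark that ``nothing forces the groups to be small'' addresses only the necessary direction (slender groups have non-measurable cardinality), not the sufficiency you need. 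The rigid or semirigid proper classes actually available in the literature --- e.g.\ the semirigid classes of cotorsion-free groups of \cite{GS} --- need not consist of slender groups; cotorsion-free or $\aleph_1$-free does not imply slender ($\Z^{\omega}$ itself is both, yet not slender), and it is precisely such subgroups that would break your displayed \L{}o\'{s}--Eda step. Nor do the references you defer to contain the construction: \cite{CSS} has no rigid systems (its Lemma~6.1 is a lifting lemma for maps into $\Z^\lambda/\Z^{<\lambda}$), and \cite{DG} does not hand you a rigid class of slender groups either. One could probably assemble what you need (note you only need the weaker, semirigid statement that for every \emph{set} $S$ of indices there is $\beta\notin S$ with $\Hom(A_\alpha,A_\beta)=0$ for all $\alpha\in S$; for instance $\kappa$\nobreakdash-free groups of cardinality $\kappa>2^{\aleph_0}$ with trivial dual, for unboundedly many $\kappa$, are slender and admit no nonzero maps from smaller members), but that is Shelah-type work and it is the entire content of the proof; as written, your argument assumes its hardest step.

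By contrast, the paper needs no rigidity at all. It takes the closure under products and retracts of the explicit proper class of groups $\Z^\kappa/\Z^{<\kappa}$, and replaces your rigidity-plus-\L{}o\'{s}--Eda mechanism by two citable facts: the lifting lemma \cite[Lemma~6.1]{CSS}, which lifts the map $A\to\Z^\lambda/\Z^{<\lambda}$ (for $A$ a retract of a product of generators and $\lambda$ regular and large) to a map $A\to\Z^\lambda$, and the Specker--\L{}o\'{s} theorem \cite[94.4]{F} --- i.e.\ slenderness of $\Z$ itself, the only slender group the paper ever uses --- which says that a nonzero homomorphism $\Z^\lambda\to\Z$ vanishing on $\oplus_{i<\lambda}\Z$ forces a measurable cardinal. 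The two proofs thus share a skeleton (no weak reflection of $\Z$, with non-measurability entering through hom-groups out of products), but the paper's choice of generators makes every ingredient available off the shelf, whereas yours leaves the decisive existence statement unproved.
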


\begin{proof}
Let $\cC$ be the closure of the class of groups $\Z^\kappa/\Z^{<\kappa}$
under products and retracts, where $\kappa$ runs over all cardinals,
and $\Z^{\kappa}$ denotes a product of copies of the integers indexed by $\kappa$
while $\Z^{<\kappa}$ denotes the subgroup of sequences whose support (i.e., the set of
nonzero entries) has cardinality smaller than~$\kappa$.
Assume that $w\colon\Z\to A$ is a weak
reflection of $\Z$ onto~$\cC$. Then there is a retraction
\[ \prod_{i\in I} \Z^{\kappa_i}/\Z^{<\kappa_i}\stackrel{r}{\longrightarrow} A \]
for some set of cardinals $\{\kappa_i\}_{i\in I}$. 
Choose a regular cardinal $\lambda$ bigger than the sum $\Sigma_{i\in I}\,\kappa_i$.
Let $d\colon\Z\to\Z^{\lambda}$ be the diagonal and
$p\colon\Z^\lambda\to\Z^\lambda/\Z^{<\lambda}$ the projection.
Since $w$ is a weak reflection, there is a homomorphism $f\colon A\to\Z^\lambda/\Z^{<\lambda}$ with
$f\circ w=p\circ d$. Following \cite[Lemma~6.1]{CSS}, there is a homomorphism $g\colon A\to\Z^\lambda$ such
that $f=p\circ g$. Since the image of~$d$ is not contained in $\Z^{<\lambda}$, 
we have $f\neq 0$ and thus $g\neq 0$. 
Since $r$ is an epimorphism, $g\circ r\ne 0$.
Now, since $\Z^{\lambda}$ maps onto $\prod_{i\in I}\Z^{\kappa_i}$,
there is a nonzero homomorphism $h\colon\Z^{\lambda}\to \Z^{\lambda}$ which vanishes
on the direct sum $\oplus_{i<\lambda}\,\Z$, since it factors through $\prod_{i\in I}\Z^{\kappa_i}/\Z^{<\kappa_i}$.
Hence, by composing $h$ with a suitable projection, we obtain 
a nonzero homomorphism $\Z^{\lambda}\to \Z$ which vanishes
on the direct sum $\oplus_{i<\lambda}\,\Z$.
According to \cite[94.4]{F}, this fact implies the existence of measurable cardinals.
This contradiction proves the statement.
\end{proof}

\begin{coro}
Assuming the nonexistence of measurable cardinals, there is a full subcategory
of the homotopy category of spectra which is closed under products
and retracts but not weakly reflective.
\end{coro}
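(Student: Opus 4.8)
The plan is to transport the counterexample of Proposition~\ref{nomeasurables} from abelian groups into spectra along the Eilenberg--MacLane embedding. Write $\cT$ for the homotopy category of spectra and let $H\colon\Ab\to\cT$ send an abelian group $A$ to the Eilenberg--MacLane spectrum $HA$ concentrated in degree zero, so that $\pi_0(HA)\cong A$ and $\pi_n(HA)=0$ for $n\neq 0$. I would begin by recording the two properties of $H$ on which everything rests. First, $H$ preserves products, since $\pi_n\bigl(\prod_i HA_i\bigr)\cong\prod_i\pi_n(HA_i)$ gives $\prod_i HA_i\cong H\bigl(\prod_i A_i\bigr)$. Second, $H$ is fully faithful: the universal coefficient sequence
\[
0\longrightarrow\operatorname{Ext}^1_{\Z}(H_{-1}(HA),B)\longrightarrow\cT(HA,HB)\longrightarrow\Hom_{\Z}(H_0(HA),B)\longrightarrow 0
\]
collapses because $HA$ is connective, whence $H_{-1}(HA)=0$ and $H_0(HA)\cong A$ by Hurewicz; thus $\cT(HA,HB)\cong\Hom_{\Z}(A,B)$ naturally.

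With these facts in place, I would let $\cC\subseteq\Ab$ be the subcategory furnished by Proposition~\ref{nomeasurables} and define $\cD\subseteq\cT$ to be the full subcategory of all spectra $X$ with $\pi_n(X)=0$ for $n\neq 0$ and $\pi_0(X)\in\cC$; equivalently, $\cD$ is the closure of $\{HA\mid A\in\cC\}$ under isomorphism. The next step is to verify that $\cD$ is closed under products and retracts. Closure under products follows at once from $\prod_i HA_i\cong H\bigl(\prod_i A_i\bigr)$ together with the closure of $\cC$ under products. For retracts, if $X$ is a retract of $HA$ in $\cT$, then $\pi_*(X)$ is a retract of $\pi_*(HA)$, so $X$ is again concentrated in degree zero with $\pi_0(X)$ a direct summand of $A$; hence $X\cong H(\pi_0 X)$ with $\pi_0(X)\in\cC$, using that $\cC$ is closed under retracts.

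Finally I would show that $\cD$ is not weakly reflective by exhibiting a single object, namely $H\Z$, admitting no weak reflection onto $\cD$. Suppose to the contrary that $l\colon H\Z\to HA$ is such a weak reflection, where $A\in\cC$ (every object of $\cD$ is isomorphic to some such $HA$). Under the identifications $\cT(H\Z,HB)\cong\Hom_{\Z}(\Z,B)\cong B$ and $\cT(HA,HB)\cong\Hom_{\Z}(A,B)$, the morphism $l$ corresponds to an element $a\in A$ and precomposition with $l$ becomes evaluation $\Hom_{\Z}(A,B)\to B$, $\varphi\mapsto\varphi(a)$. Surjectivity of $\cT(l,HB)$ for every $B\in\cC$ then says precisely that for each $B\in\cC$ and each $b\in B$ some homomorphism $\varphi\colon A\to B$ satisfies $\varphi(a)=b$; that is, $a\colon\Z\to A$ is a weak reflection of $\Z$ onto $\cC$ in $\Ab$, contradicting Proposition~\ref{nomeasurables}.

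The reduction in the last step is entirely formal once $H$ is known to be fully faithful, product\nobreakdash-preserving, and to have retract\nobreakdash-closed image, so the main obstacle is really just the careful verification of these three embedding properties. The substance of the result is already contained in Proposition~\ref{nomeasurables}, which this corollary merely promotes to spectra through the embedding~$H$.
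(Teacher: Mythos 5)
Your proposal is correct and takes essentially the same route as the paper: transport the subcategory $\cC$ of Proposition~\ref{nomeasurables} along the Eilenberg--Mac Lane embedding $H$, check that the resulting full subcategory of spectra is closed under products and retracts, and use full faithfulness of $H$ to convert a hypothetical weak reflection of $H\Z$ into a weak reflection of $\Z$ onto $\cC$, contradicting that proposition. The only difference is one of detail: you explicitly verify the embedding properties (full faithfulness via the universal coefficient sequence, and product/retract closure via homotopy groups) that the paper simply asserts.
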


\begin{proof}
Consider the full embedding $H$ of the category of abelian groups into the
homotopy category of spectra given by assigning to each abelian group $A$
an Eilenberg--Mac Lane spectrum $HA$ representing ordinary cohomology with coefficients in~$A$.
Since $H$ preserves products and its image is closed under retracts, it sends the class
$\cC$ considered in the proof of Proposition~\ref{nomeasurables}
to a class $H\cC$ of spectra closed under products and retracts.
This class $H\cC$ is not weakly reflective, since the
above argument shows that $H\Z$ does not admit a weak reflection onto $H\cC$
if there are no measurable cardinals.
\end{proof}

\section{Coreflective localizing subcategories}
\label{CLS}

In Section~\ref{RCS} we proved that, if $\cK$ is a locally presentable category with
a stable model category structure, then Vop\v{e}nka's principle implies
that all colocalizing subcategories (in fact, all full subcategories closed under products and fibres) 
of $\Ho(\cK)$ are reflective.
Our purpose in this section is to study if Vop\v{e}nka's principle also implies that all 
localizing or semilocalizing subcategories of $\Ho(\cK)$ are coreflective.
By Corollary~\ref{closed_implies_coreflective}, this is equivalent to asking
if they are closed.

We provide an affirmative answer by assuming that $\cK$ be
cofibrantly generated~\cite[Definition~2.1.17]{Ho1}, in addition to being locally presentable. 
Recall that a cofibrantly generated model category whose
underlying category is locally presentable is called \emph{combinatorial}.
It was shown in \cite{Dug2} and \cite{Dug1} that a model category is combinatorial if and only if
it is Quillen equivalent to a localization of some
category of diagrams of simplicial sets with respect to a set of morphisms.
Hence, examples abound. When $\cK$ is combinatorial, 
we not only prove that semilocalizing subcategories of $\Ho(\cK)$ are coreflective,
but we moreover show that they are singly generated.

Although, for the validity of our arguments, we need that $\cK$ be \emph{simplicial} \cite[II.3]{GJ},
it is not necessary to impose this as a restriction, due to the following fact.

\begin{propo}
\label{Dugger}
Every (stable) combinatorial model category is Quillen
equivalent to a (stable) simplicial combinatorial model category.
\end{propo}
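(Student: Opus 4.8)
The plan is to deduce the proposition from Dugger's presentation theorem for combinatorial model categories, which was recalled in the paragraph preceding the statement. By \cite{Dug1} and \cite{Dug2}, a combinatorial model category $\cK$ is Quillen equivalent to a left Bousfield localization $\cE=L_S(\sSets^{C^{\op}})$ of the category of simplicial presheaves on a small category $C$ with respect to a set of morphisms~$S$. First I would verify that $\cE$ is a simplicial combinatorial model category. The category $\sSets^{C^{\op}}$ with its projective model structure is simplicial and combinatorial: it is locally presentable, being a presheaf category with values in a locally presentable category, and it is cofibrantly generated, with generating (acyclic) cofibrations induced from those of $\sSets$ by tensoring with representable presheaves. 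Left Bousfield localization of a left proper combinatorial model category at a set of maps exists, leaves the underlying locally presentable category unchanged, yields again a cofibrantly generated structure, and preserves the simplicial enrichment; for the last point I would cite \cite{Hi}. This already proves the nonstable assertion.

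For the stable case the extra ingredient is that stability is transported along Quillen equivalences, but one must first repair pointedness, since $\cE$ is not pointed on the nose. I would argue as follows. As $\cK$ is stable it is in particular pointed, so $\Ho(\cK)$ possesses a zero object; the Quillen equivalence gives an equivalence $\Ho(\cE)\simeq\Ho(\cK)$, whence $\Ho(\cE)$ also has a zero object. Concretely this means that the canonical morphism from the initial to the terminal object of $\cE$ is a weak equivalence, that is, $\cE$ is homotopy pointed. Passing to the coslice $\cE_*$ of objects under the terminal object produces a pointed model category which is again simplicial and combinatorial: coslices of locally presentable categories are locally presentable, the under-category model structure on $\cE_*$ is cofibrantly generated whenever $\cE$ is, and the simplicial structure descends to pointed objects as in \cite{Ho1}.

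Next I would check that $\cE_*$ is Quillen equivalent to $\cE$, hence to $\cK$. The relevant adjunction is the one whose left adjoint adds a disjoint basepoint and whose right adjoint forgets it; this is a Quillen pair, and it is a Quillen equivalence exactly because $\cE$ is homotopy pointed. Indeed, for a cofibrant object $X$ the inclusion $X\to X\sqcup *$ is the pushout of the weak equivalence $\emptyset\to *$ along the cofibration $\emptyset\to X$, and $\cE$ is left proper (simplicial presheaves are left proper and left Bousfield localization preserves left properness), so $X\to X\sqcup *$ is a weak equivalence; the Quillen equivalence criterion follows. Finally, stability of $\cE_*$ is immediate: a Quillen equivalence of pointed model categories intertwines the derived suspension and loop functors, so the equivalence $\Ho(\cE_*)\simeq\Ho(\cK)$ carries the pair $(\Sigma,\Omega)$ to $(\Sigma,\Omega)$; as these are inverse equivalences on $\Ho(\cK)$, they are inverse equivalences on $\Ho(\cE_*)$, and $\cE_*$ is stable.

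The step I expect to be the main obstacle is precisely this pointedness bookkeeping in the stable case. The nonstable half is essentially a citation of \cite{Dug1} and \cite{Dug2} together with the standard stability of the class of simplicial combinatorial model categories under left Bousfield localization at a set; the real work lies in certifying that the coslice $\cE_*$ simultaneously retains all of the properties ``pointed'', ``simplicial'', ``combinatorial'', and ``Quillen equivalent to $\cK$'', for which the homotopy pointedness of $\cE$ and the left properness used above are the essential inputs.
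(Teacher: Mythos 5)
Your proof is correct, but it takes a genuinely different route from the paper's. The paper obtains pointedness directly from the literature: it invokes \cite[Corollary~6.4]{Dug2} together with \cite[Proposition~5.2]{Dug3}, which present $\cK$ as Quillen equivalent to a left Bousfield localization of the category $U_+\cC$ of \emph{pointed} simplicial presheaves; that localization is already pointed, simplicial, combinatorial and left proper, so the only remaining step is transferring stability. You instead use only the unpointed presentation theorem and repair pointedness by hand: you observe that $\cE$ is homotopy pointed, pass to the coslice $\cE_*$ under the terminal object, and prove that $(-)_+\dashv U$ is a Quillen equivalence from left properness plus homotopy pointedness (your unit argument is the whole content here, since $U$ creates weak equivalences, so the standard criterion applies). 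In effect you are inlining the reduction that underlies the pointed presentation result the paper merely cites; your version is more self-contained, needing only the unpointed theorem of \cite{Dug1}, \cite{Dug2} and standard facts about coslices and localizations, at the cost of length, while the paper's proof is essentially a citation.

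One point in your final step deserves care: the principle you quote, that a Quillen equivalence of \emph{pointed} model categories intertwines derived suspension and loops, does not literally apply to your chain, because both links of $\Ho(\cE_*)\simeq\Ho(\cE)\simeq\Ho(\cK)$ pass through the unpointed category $\cE$. The conclusion is still true and the repair is routine: $\Ho(\cE)$ has a zero object, suspension and loops in all three homotopy categories are computed as the homotopy pushout of $\ast\leftarrow X\to\ast$ and the homotopy pullback of $\ast\to X\leftarrow\ast$, and equivalences derived from Quillen equivalences preserve homotopy pushouts, homotopy pullbacks, and (trivially, being equivalences) zero objects; hence the composite equivalence carries $(\Sigma,\Omega)$ to $(\Sigma,\Omega)$. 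The paper sidesteps this wrinkle because its transfer principle is applied between two pointed model categories, namely $\cK$ and the localization of $U_+\cC$.
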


\begin{proof}
For a small category $\cC$, denote by $U_+\cC$, as in \cite{Dug3}, the category
of functors from $\cC^{\rm op}$ to the category $\sSets_*$ of pointed simplicial sets.
According to \cite[Corollary~6.4]{Dug2} and \cite[Proposition~5.2]{Dug3}, 
for every combinatorial model category $\cK$
there is a small category $\cC$ such that $\cK$ is Quillen equivalent to the 
left Bousfield localization of $U_+\cC$ with respect to a certain set of morphisms. 
The category $U_+\cC$ is combinatorial, pointed, simplicial and left proper, 
and so is any of its localizations.
Since Quillen equivalences preserve the suspension and loop functors, every pointed model category
which is Quillen equivalent to a stable one is itself stable.
\end{proof}

One crucial property of combinatorial model categories that we will use
in this section is the following. For every combinatorial model category $\cK$
there is a regular cardinal $\lambda$ such that, if $X\colon I\to\cK$
and $Y\colon I\to\cK$ are diagrams where $I$ is a small $\lambda$\nobreakdash-filtered category,
and a morphism of diagrams $f\colon X\to Y$ is given such that $f_i\colon X_i\to Y_i$
is a~weak equivalence for each $i\in I$, then the induced map 
$\colim_I X\to\colim_I Y$ is also~a weak equivalence. For a proof of this fact,
see \cite[Proposition~2.3]{Dug1}. 

Another feature of combinatorial model categories is that, if $\cK$ is combinatorial
and $I$ is any small category, then the \emph{projective model structure}
(in which weak equivalences and fibrations are objectwise) and the \emph{injective model structure}
(in which weak equivalences and cofibrations are objectwise)
exist on the diagram category~$\cK^I$; see~\cite[Proposition~A.2.8.2]{Lu}.
In fact, as shown in \cite[Theorem~11.6.1]{Hi}, 
for the existence of the projective model structure it is enough that
$\cK$ be cofibrantly generated.

If $\cK^I$ is equipped with the projective model structure,
then the constant functor $\cK\to\cK^I$ is right Quillen and therefore
its left adjoint $\colim_I\colon\cK^I\to\cK$ is left Quillen, so it preserves
cofibrations, trivial cofibrations, and weak equivalences between
cofibrant diagrams \cite[II.8.9]{GJ}. Hence, its total left derived functor
$\hocolim_I$ exists.

Since we will need to use explicit formulas to compute homotopy colimits,
we recall, before going further, 
a number of basic facts about homotopy colimits in model categories.
Our main sources are \cite{BK}, \cite{Ga}, \cite{GJ}, \cite{Hi}, \cite{HV}, \cite{Ho1}, \cite{Sh}.
For simplicity, we restrict our discussion to pointed simplicial model categories,
which is sufficient for our purposes. The unpointed case would be treated analogously.

\subsection{A review of homotopy colimits}

Let $\cK$ be a pointed simplicial model category. Let $*$
be the initial and terminal object, and let
$\otimes$ denote the tensoring of $\cK$ over \emph{pointed} simplicial sets.
For each simplicial set~$W$, we denote by $W_+$ its union with a disjoint basepoint.

Let $\mathbf\Delta$ denote the category whose objects are finite ordered sets $[n]=(0,1,\dots,n)$
for $n\ge 0$, and whose morphisms are nondecreasing functions.
Let $\Delta[n]$ be the simplicial set whose set of $k$\nobreakdash-simplices
is the set of morphisms $[k]\to[n]$ in $\mathbf\Delta$,
and denote by $\Delta_+\colon {\mathbf\Delta}\to\sSets_*$ the functor 
that sends $[n]$ to~$\Delta[n]_+$.
If $X$ is a cofibrant object in~$\cK$,
then $X\otimes\partial\Delta[1]_+\to X\otimes\Delta[1]_+$ is a cofibration yielding
a cylinder for~$X$, and hence $\Sigma X\simeq X\otimes S^1$; cf.~\cite[6.1.1]{Ho1}

The \emph{realization} $|B|$ of a simplicial object $B\colon{\mathbf\Delta}^{\rm op}\to \cK$
is the coequalizer of the two morphisms
\begin{equation}
\label{realization}
\coprod_{[m]\to [n]} B_n\otimes\Delta[m]_+
\xymatrix{ {} \ar@<+0.7ex>[r] \ar@<-0.7ex>[r] & {} }
\coprod_{[n]} B_n\otimes\Delta[n]_+
\end{equation}
induced by $B_n\to B_m$ and $\Delta[m]_+\to \Delta[n]_+$, 
respectively, for each morphism $[m]\to [n]$ in~$\mathbf\Delta$;
see~\cite[VII.3.1]{GJ}.
Using coend notation \cite[IX.6]{Mac}, this can be written as
\[
|B|= \int^n B_n\otimes \Delta[n]_+ = B\otimes_{{\mathbf\Delta}^{\rm op}}\Delta_+.
\] 

Suppose given functors $X\colon I\to\cK$ and $W\colon I^{\rm op}\to\sSets_*$,
where $W$ will be called a \emph{weight}.
The (two-sided) \emph{bar construction} $B(W,I,X)\in\cK^{{\mathbf\Delta}^{\rm op}}$ 
is the simplicial object with
\begin{equation}
\label{bar}
B(W,I,X)_n=\coprod_{i_n\to\,\cdots\,\to i_0} X_{i_n}\otimes W_{i_0},
\end{equation}
whose $k$th face map omits $i_k$ using the identity on $X_{i_n}\otimes W_{i_0}$
if $0<k<n$, and using $W_{i_0}\to W_{i_1}$ if $k=0$ and
$X_{i_n}\to X_{i_{n-1}}$ if $k=n$.
Degeneracies are given by insertions of the identity.
If we choose as weight the constant diagram $S$ at the $0$th sphere~$S^0$,
then we denote $B_IX=B(S,I,X)$ and call it
a \emph{simplicial replacement} of~$X$.

The (pointed) \emph{homotopy colimit} of a functor $X\colon I\to\cK$ is defined as
\begin{equation}
\label{defhocolim}
\hocolim_I X=|B_IX|.
\end{equation}

It follows that homotopy colimits commute; that is, given $X\colon I\times J\to\cK$, 
\[
\hocolim_I\,\hocolim_JX\cong\hocolim_{I\times J}X\cong \hocolim_J\,\hocolim_I X.
\]

 From (\ref{defhocolim}) and (\ref{realization}) one obtains the \emph{Bousfield--Kan formula}
\cite[XII.2.1]{BK}, \cite[18.1.2]{Hi}, as follows.
Let $N(i\downarrow I)^{\rm op}$ be the nerve of the category $(i\downarrow I)^{\rm op}$ for each $i\in I$.
Thus, $N(i\downarrow I)^{\rm op}_+$ is the realization of the 
simplicial space ${\mathbf\Delta}^{\rm op}\to\sSets_*$ that consists in degree~$n$ of a
coproduct of copies of $S^0$ indexed by the set of sequences $i\to i_n\to\cdots\to i_0$ 
of morphisms in~$I$. Since coequalizers commute, we have
\[
\hocolim_I X\cong {\rm coeq}
\left[
\coprod_{i\to j} X_i\otimes N(j\downarrow I)^{\rm op}_+
\xymatrix{ {} \ar@<+0.7ex>[r] \ar@<-0.7ex>[r] & {} }
\coprod_i X_i\otimes N(i\downarrow I)^{\rm op}_+
\right]
= X\otimes_I N(-\downarrow I)^{\rm op}_+.
\]
(We note that $(I\downarrow i)$ was used in \cite{BK} instead of~$(i\downarrow I)^{\rm op}$.)
In other words, $\hocolim_I X$ is a weighted colimit of $X$ with weight
$N(-\downarrow I)^{\rm op}_+\colon I^{\rm op}\to\sSets_*$.

For simplicial objects ${\mathbf\Delta}^{\rm op}\to\cK$, the \emph{$n$th~skeleton}
$\sk_n$ is the composite of the truncation functor
$\cK^{{\mathbf\Delta}^{\rm op}}\to\cK^{{\mathbf\Delta}_n^{\rm op}}$ with its left adjoint,
where ${\mathbf\Delta}_n$ is the full subcategory of $\mathbf\Delta$ with objects
$\{[0],\dots,[n]\}$; see~\cite[VII.1.3]{GJ}. Thus $(\sk_n B)_m\cong B_m$ if $m\le n$, and hence 
\[
B\cong\colim_n\sk_n B
\] 
for every simplicial object~$B$.

The $n$th \emph{latching object} of $B\colon{\mathbf\Delta}^{\rm op}\to\cK$
is defined as $L_nB=(\sk_{n-1}B)_n$ for each~$n$. As explained in \cite[VII.2]{GJ},
the category $\cK^{{\mathbf\Delta}^{\rm op}}$ of simplicial objects in $\cK$ admits a model structure
(called \emph{Reedy model structure}) where weak equivalences are objectwise and cofibrations are morphisms
$f\colon X\to Y$ such that $L_nY\coprod_{L_nX}X_n\to Y_n$ is a cofibration for all~$n$.
Thus an object $B$ is Reedy cofibrant if and only if the natural morphisms $L_nB\to B_n$
are cofibrations in $\cK$ for all~$n$.

If $B$ is Reedy cofibrant, then
each skeleton $\sk_n B$ is also Reedy cofibrant, 
and the inclusions $\sk_{n-1} B\hookrightarrow \sk_n B$ are Reedy cofibrations;
see e.g.\ \cite[Proposition~6.5]{BM}.

As shown in \cite[VII.3.6]{GJ}, the realization functor is left Quillen if
$\cK^{{\mathbf{\Delta}}^{\rm op}}$ is equipped with the Reedy model structure.
The following consequence is crucial.

\begin{lemma}
\label{invariance}
Let $\cK$ be a pointed simplicial model category and let $I$ be small.
\begin{itemize}
\item[{\rm (a)}]
If a diagram $X\colon I\to \cK$ is objectwise cofibrant, then $B_IX$ is Reedy cofibrant.
\item[{\rm (b)}]
If $f\colon X\to Y$ is an objectwise weak equivalence in $\cK^I$ and the diagrams $X$ and $Y$
are objectwise cofibrant, then the induced morphism $\hocolim_IX\to\hocolim_IY$ is a weak equivalence
of cofibrant objects.
\end{itemize}
\end{lemma}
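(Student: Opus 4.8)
The plan is to establish part~(a) by exhibiting $B_IX$ as a split simplicial object, and then to deduce part~(b) from the fact that the realization functor is left Quillen.

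For (a), I would first rewrite the simplicial replacement using~(\ref{bar}): since the weight $S$ is constant at $S^0$ and $X_{i_n}\otimes S^0\cong X_{i_n}$, we have $(B_IX)_n=\coprod_{i_n\to\cdots\to i_0}X_{i_n}$. Call a chain \emph{nondegenerate} if none of its arrows is an identity, and let $N_m$ be the coproduct of the objects $X_{j_m}$ over all nondegenerate chains $j_m\to\cdots\to j_0$. Because the degeneracies of $B_IX$ act by inserting identities and do not alter the source object, every $n$\nobreakdash-simplex is uniquely a degeneracy of a nondegenerate one, yielding a splitting $(B_IX)_n\cong\coprod_{[n]\twoheadrightarrow[m]}N_m$ indexed by the surjections in $\mathbf\Delta$. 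For a split simplicial object the latching map $L_n(B_IX)\to(B_IX)_n$ is exactly the inclusion of the summands indexed by surjections $[n]\twoheadrightarrow[m]$ with $m<n$, whose complement is the single summand $N_n$. As each $X_{i_n}$ is cofibrant by hypothesis, $N_n$ is a coproduct of cofibrant objects and hence cofibrant, so this summand inclusion is a cofibration, being a pushout of $*\to N_n$. Thus every latching map is a cofibration and $B_IX$ is Reedy cofibrant.

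For (b), I would first note that by (a) both $B_IX$ and $B_IY$ are Reedy cofibrant, so their realizations $\hocolim_IX$ and $\hocolim_IY$ are cofibrant because $|-|$ is left Quillen. In each simplicial degree the map $B_If$ is the coproduct $\coprod_{i_n\to\cdots\to i_0}f_{i_n}$ of the weak equivalences $f_{i_n}\colon X_{i_n}\to Y_{i_n}$ between cofibrant objects; since coproduct is a left adjoint, it preserves such weak equivalences, so $B_If$ is an objectwise, hence Reedy, weak equivalence between Reedy cofibrant objects. Applying the left Quillen realization functor, which preserves weak equivalences between cofibrant objects, then gives that $|B_If|\colon\hocolim_IX\to\hocolim_IY$ is a weak equivalence, as required.

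The main obstacle will be the combinatorial verification in (a) that $B_IX$ is split and that its latching object is precisely the degenerate summand; this is where one must track how the degeneracies insert identities and check that the source object indexing the summand is unchanged. Once the splitting is established, cofibrancy of the latching maps is formal from objectwise cofibrancy, and part~(b) reduces to the standard fact that left Quillen functors---here both the realization and the coproduct---preserve weak equivalences between cofibrant objects.
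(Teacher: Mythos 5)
Your proof is correct and takes essentially the same approach as the paper: in (a) you split $B_n$ into degenerate and nondegenerate summands so that the latching map becomes a coproduct inclusion with cofibrant complement (the paper writes $B_n=L_nB\coprod Z_nB$ and argues identically), and in (b) you apply the left Quillen realization functor to the levelwise weak equivalence $B_If\colon B_IX\to B_IY$ between Reedy cofibrant objects, exactly as the paper does. One small wording point: coproducts preserve weak equivalences between cofibrant objects not because they are left adjoints, but because they are left Quillen functors combined with Ken Brown's lemma; since that is the fact you actually invoke, the argument stands.
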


\begin{proof}
Let $B=B_IX$. Thus $B_n=\coprod_{i_n\to\,\cdots\,\to i_0} X_{i_n}$ for all $n\ge 0$,
and we may write
\begin{equation}
\label{degenerate}
B_n=L_nB\textstyle\coprod Z_nB,
\end{equation}
where $L_nB$ includes the ``degenerate'' summands of~$B_n$, i.e., those labelled by 
sequences $i_n\to\cdots\to i_0$ where some arrow is an identity, and $Z_nB$ collects the rest.
Then the inclusion $L_nB\to B_n$ is a coproduct of the identity $L_nB\to L_nB$
and $*\to Z_nB$, which is a cofibration since $X$ takes cofibrant values. 
This proves part~(a).
Then part~(b) follows from the fact that realization is left Quillen,
since $B_If\colon B_IX\to B_IY$ is a weak equivalence between Reedy cofibrant objects.
\end{proof}

Thus, if defined as in (\ref{defhocolim}), 
the homotopy colimit is only homotopy invariant
on \emph{objectwise cofibrant} diagrams. 
For this reason, it is often convenient
to ``correct'' $\hocolim_I$ by composing it 
with a cofibrant replacement functor in~$\cK$, as in \cite[Definition~8.2]{Sh}.

The fundamental fact that, if made homotopy invariant, $\hocolim_I$
yields a total left derived functor of $\colim_I$ is explained as follows.
For each diagram $X\colon I\to\cK$ there is a natural morphism
\begin{equation}
\label{comparison}
\hocolim_I X\longrightarrow \colim_I X,
\end{equation}
since $\colim_I X$ is the coequalizer of the two face morphisms 
$\coprod_{i\to j} X_i \rightrightarrows \coprod_i X_i$;
that is, the morphism (\ref{comparison}) takes the form
\begin{equation}
\label{map}
X\otimes_I N(-\downarrow I)^{\rm op}_+ \longrightarrow X\otimes_I S.
\end{equation}
This morphism is a weak equivalence only in some cases; cf.\ \cite[XII.2]{BK}.
For instance, it is so if $I$ has a terminal object.
More importantly, (\ref{map}) is a weak equivalence if $X$ is \emph{cofibrant}
in the projective model structure of~$\cK^I$. To show this,
use the fact, proved in \cite[Theorem~3.2]{Ga}, that $(-)\otimes_I (-)$
is a left Quillen functor in two variables if the projective model structure
exists and is chosen on $\cK^I$ and the injective model structure
is considered in $\sSets_*^{I^{\rm op}}$. 
Accordingly, if $X$ is a projectively cofibrant diagram,
then $X\otimes_I(-)$ preserves weak equivalences between (objectwise) cofibrant objects,
so (\ref{map}) is indeed a weak equivalence.

It is also true, as shown in \cite[Theorem~3.3]{Ga}, that $(-)\otimes_I (-)$ is left 
Quillen in two variables if the projective model structure is
considered in $\sSets_*^{I^{\rm op}}$ and the injective model structure exists and is chosen on~$\cK^I$.
Thus, since $N(-\downarrow I)_+^{\rm op}\to S$ is a projectively cofibrant approximation
in $\sSets_*^{I^{\rm op}}$, the Bousfield--Kan formula 
displays in fact $\hocolim_I$ as a left derived functor of
$\colim_I$, provided that we restrict it to objectwise cofibrant diagrams
(i.e., cofibrant in the injective model structure).

For some purposes it is useful to consider the following functorial
projectively cofibrant replacement of a given diagram $X\colon I\to\cK$.
Assume that $X$ takes cofibrant values (or compose it with a cofibrant
replacement functor in $\cK$ otherwise).
Consider the functor $B_{(I\downarrow-)}X\colon I\times{\mathbf\Delta}^{\rm op}\to\cK$ given by
\[
(B_{(I\downarrow-)}X)(j,[n])=(B_{(I\downarrow j)}(X\circ U_j))_n=\coprod_{i_n\to\,\cdots\,\to i_0\to j} X_{i_n},
\]
where $U_j\colon (I\downarrow j)\to I$ sends each arrow $i\to j$ to~$i$,
and let $\widetilde X=|B_{(I\downarrow-)}X|$. Thus,
\begin{equation}
\label{resolution}
\widetilde X_j=|B_{(I\downarrow j)}(X\circ U_j)|=
\hocolim_{(I\downarrow j)}\,(X\circ U_j)
\end{equation}
for all $j\in I$.
Since $(I\downarrow j)$ has a terminal object for each~$j$, 
the natural morphism $\widetilde X\to X$ is an objectwise weak equivalence.
Using the fact that realization is a left adjoint and hence commutes with colimits,
one obtains a canonical isomorphism
\begin{multline}
\label{colimhocolim}
\colim_I\widetilde X=
\colim_j\widetilde X_j=\colim_j\,\hocolim_{(I\downarrow j)}(X\circ U_j)\\
=\colim_j|B_{(I\downarrow j)}(X\circ U_j)|
\cong|\colim_j B_{(I\downarrow j)}(X\circ U_j)|\cong |B_IX|=\hocolim_I X.
\end{multline}
In order to prove that the diagram $\widetilde X$ is indeed projectively cofibrant, 
view $B_{(I\downarrow-)}X$ as an object in $(\cK^I)^{{\mathbf\Delta}^{\rm op}}$
and check that it is Reedy cofibrant if the projective model structure is chosen in~$\cK^I$,
similarly as in part~(a) of Lemma~\ref{invariance}.

Although projectively cofibrant diagrams are not easy to characterize
in general, we note the following well-known special case for subsequent reference.

\begin{lemma}
\label{cofibrantsequences}
Let $\lambda$ be an infinite ordinal and let $\cK$ be a model category.
Suppose that, for an objectwise cofibrant diagram $X\colon \lambda\to\cK$,
each morphism $X_i\to X_{i+1}$ with $i<\lambda$ is a cofibration
and the induced morphism $\colim_{i<\alpha}X_i\to X_{\alpha}$
is also a cofibration for every limit ordinal $\alpha<\lambda$. 
Then the diagram $X$ is projectively cofibrant in~$\cK^{\lambda}$.
\end{lemma}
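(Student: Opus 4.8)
The plan is to realize the map from the constant initial diagram to $X$ as a transfinite composition of pushouts of projective cofibrations, which forces $X$ to be projectively cofibrant. The basic tool is that for each ordinal $\alpha<\lambda$ the evaluation functor $\mathrm{ev}_\alpha\colon\cK^\lambda\to\cK$ preserves objectwise fibrations and objectwise weak equivalences, hence is right Quillen; writing $F_\alpha\colon\cK\to\cK^\lambda$ for its left adjoint, $F_\alpha$ is left Quillen and therefore sends cofibrations of $\cK$ to projective cofibrations of $\cK^\lambda$. Concretely, $(F_\alpha A)_j=A$ if $\alpha\le j$ and $(F_\alpha A)_j$ is the initial object otherwise, with identity transition maps for $\alpha\le j\le j'$.

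First I would introduce the ``partial colimit'' diagrams $Y^\alpha\colon\lambda\to\cK$ for $\alpha\le\lambda$, defined as the left Kan extension to $\lambda$ of the restriction of $X$ to the objects $<\alpha$. Explicitly, $Y^\alpha_j=X_j$ for $j<\alpha$ and $Y^\alpha_j=L_\alpha:=\colim_{k<\alpha}X_k$ for $j\ge\alpha$, with the evident transition maps. One checks directly that $Y^0$ is the constant initial diagram, that $Y^\lambda=X$, and that at every limit ordinal $\beta$ the canonical map $\colim_{\alpha<\beta}Y^\alpha\to Y^\beta$ is an isomorphism (computing colimits objectwise and using $\colim_{\alpha<\beta}L_\alpha\cong L_\beta$), so that the family $(Y^\alpha)_{\alpha\le\lambda}$ is a continuous transfinite sequence.

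The heart of the argument is the successor step. For each $\alpha$ the comparison map $g_\alpha\colon L_\alpha\to X_\alpha$ is a cofibration in $\cK$: for $\alpha=0$ it is $\emptyset\to X_0$, a cofibration because $X$ is objectwise cofibrant; for $\alpha$ a successor it is one of the given cofibrations $X_\beta\to X_{\beta+1}$; and for $\alpha$ a limit it is the cofibration supplied by the hypothesis. I would then verify that $Y^{\alpha+1}$ is the pushout of $F_\alpha(g_\alpha)\colon F_\alpha(L_\alpha)\to F_\alpha(X_\alpha)$ along the natural map $F_\alpha(L_\alpha)\to Y^\alpha$ (the identity in degrees $\ge\alpha$, and the unique map out of the initial object in degrees $<\alpha$). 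Since colimits in $\cK^\lambda$ are objectwise, this reduces to two cases: for $j<\alpha$ the pushout of the span $\emptyset\leftarrow\emptyset\to X_j$ is $X_j$, and for $j\ge\alpha$ the pushout of $X_\alpha\xleftarrow{g_\alpha}L_\alpha\xrightarrow{\id}L_\alpha$ is $X_\alpha$; both agree with $Y^{\alpha+1}_j$.

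Assembling these facts, $Y^0\to Y^\lambda=X$ is a transfinite composition of pushouts of the projective cofibrations $F_\alpha(g_\alpha)$, and since projective cofibrations are closed under pushout and transfinite composition, $X=Y^\lambda$ is projectively cofibrant. The main obstacle I anticipate is purely bookkeeping: confirming that $F_\alpha(L_\alpha)\to Y^\alpha$ is genuinely natural and that the objectwise pushout reproduces the diagram $Y^{\alpha+1}$ together with its correct transition maps (not merely its objects). Once the successor map is pinned down as a pushout of $F_\alpha(g_\alpha)$, the limit compatibility and the final transfinite composition are routine.
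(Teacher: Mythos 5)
Your proof is correct, but it takes a genuinely different route from the paper's. The paper's proof is a direct lifting argument: given an objectwise trivial fibration $A\to B$ in $\cK^{\lambda}$ and a morphism $X\to B$, it constructs a lift $X\to A$ by transfinite induction on the components, at stage $\alpha$ solving a lifting problem in $\cK$ for the cofibration $\colim_{i<\alpha}X_i\to X_\alpha$ (resp.\ $\emptyset\to X_0$, $X_i\to X_{i+1}$) against the trivial fibration $A_\alpha\to B_\alpha$. You instead exhibit $\emptyset\to X$ as a transfinite composition of pushouts of the maps $F_\alpha(g_\alpha)$ and conclude from closure of lifting classes under these operations; the inductive content (the maps $g_\alpha$ being cofibrations) is the same, but the architecture is different. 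Your version buys slightly more: it gives an explicit cell presentation of $X$ from the maps $F_\alpha(\text{cofibration})$, which is useful beyond mere cofibrancy (these are exactly the generating projective cofibrations when they exist), whereas the paper's argument is shorter and needs no adjoints to evaluation and no closure properties. One point of rigor to fix in yours: since $\cK$ is an arbitrary model category, the projective model structure on $\cK^{\lambda}$ is not supplied by the results quoted in the paper (those require $\cK$ cofibrantly generated), so ``$\mathrm{ev}_\alpha$ is right Quillen'' is not literally available; but your argument survives verbatim if ``projective cofibration'' is read as ``map with the left lifting property against all objectwise trivial fibrations'': that $F_\alpha(g_\alpha)$ has this property follows by transposing the lifting problem across the adjunction $F_\alpha\dashv\mathrm{ev}_\alpha$ to a lifting problem for $g_\alpha$ against $A_\alpha\to B_\alpha$, and such lifting classes are closed under pushout and transfinite composition for purely categorical reasons. (Alternatively, observe that the ordinal $\lambda$ is a direct category, so the projective model structure on $\cK^{\lambda}$, being the Reedy structure, does exist for any $\cK$.) With that reading, both proofs establish exactly the same property, namely the one the paper's proof verifies directly.
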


\begin{proof}
For each objectwise trivial fibration $A\to B$ in $\cK^{\lambda}$ and each morphism $X\to B$,
the existence of a lifting $X\to A$ follows by transfinite induction.
\end{proof}

For an objectwise cofibrant diagram $X\colon I\to \cK$, the homotopy colimit $\hocolim_I X$ can be filtered as follows.
Let $B=B_IX$, and denote $F_n=|\sk_n B|$. 
Since $B$ is Reedy cofibrant, the \emph{Bousfield--Kan map}
\[
\hocolim_{{\mathbf\Delta}^{\rm op}} B = B\otimes_{{\mathbf\Delta}^{\rm op}}N(-\downarrow{\mathbf\Delta}^{\rm op})^{\rm op}_+
\longrightarrow 
B\otimes_{{\mathbf\Delta}^{\rm op}}\Delta_+ = |B|
\]
is a weak equivalence; cf. \cite[XII.3.4]{BK}, \cite[18.7.1]{Hi}.
Since homotopy colimits commute,
\begin{align}
\label{filtration}
\hocolim_I X= |B| \simeq 
\hocolim_{{\mathbf\Delta}^{\rm op}} B \simeq &
\hocolim_{{\mathbf\Delta}^{\rm op}}\,\hocolim_n\sk_n B
\\
\notag
\cong & \hocolim_n\,\hocolim_{{\mathbf\Delta}^{\rm op}}\sk_n B\simeq\hocolim_n F_n.
\end{align}

This equivalence, which was our main goal in this subsection, 
is relevant in the context of triangulated categories, since
it allows us to replace a homotopy colimit indexed by an arbitrary small category by
another one indexed by a countably infinite ordinal, which fits into a well-known triangle
involving countable coproducts and the shift map, as in \cite[Definition~1.6.4]{N1}.

\subsection{Singly generated semilocalizing subcategories are coreflective}

The filtration displayed in (\ref{filtration}) of a homotopy colimit was used in \cite{B3}, \cite{B2}
to show that the class of acyclics of any homology theory on spectra is closed under homotopy colimits,
as a key ingredient of the proof of the existence of homological localizations.
The validity of the same argument for localizing subcategories of 
stable homotopy categories was suggested in \cite[Remark~2.2.5]{HPS}.
A~similar argument in derived categories of Grothendieck categories
was used for filtered homotopy colimits in~\cite[Theorem~3.1]{AJS2}.
We generalize it as follows.

\begin{propo}
\label{hocolim}
Let $\cK$ be a stable simplicial model category and let $\gamma\colon \cK\to\Ho(\cK)$
denote the canonical functor. Let $\cC$ be a semilocalizing subcategory of $\Ho(\cK)$.
If a diagram $X\colon I\to\cK$ is objectwise cofibrant and $\gamma X_i\in\cC$
for all $i\in I$, then $\gamma\hocolim_IX\in\cC$.
\end{propo}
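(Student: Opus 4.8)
The plan is to reduce the homotopy colimit over the arbitrary small category $I$ to a sequential one, exactly as prepared in~(\ref{filtration}), and then to build up each stage of the resulting filtration by finitely many applications of the closure properties of~$\cC$. Write $B=B_IX$; since $X$ is objectwise cofibrant, $B$ is Reedy cofibrant by Lemma~\ref{invariance}(a). By~(\ref{filtration}) we have $\gamma\hocolim_IX\cong\hocolim_n F_n$ in $\Ho(\cK)$, where $F_n=|\sk_n B|$. Hence it suffices to prove that $\gamma F_n\in\cC$ for every~$n$, and then that the sequential homotopy colimit of the $F_n$ lies in~$\cC$.

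First I would show by induction on $n$ that $\gamma F_n\in\cC$. For $n=0$ one has $F_0=B_0=\coprod_{i_0}X_{i_0}$, which is in $\cC$ because $\cC$ is closed under coproducts. For the inductive step, Reedy cofibrancy of $B$ makes $\sk_{n-1}B\hookrightarrow\sk_n B$ a Reedy cofibration, so, since realization is left Quillen, $F_{n-1}\to F_n$ is a cofibration of cofibrant objects. Using the decomposition $B_n=L_nB\coprod Z_nB$ from~(\ref{degenerate}) together with the standard pushout presentation of skeleta (as in~\cite[VII.3]{GJ}), the cofibre of $F_{n-1}\to F_n$ is
\[
Z_nB\otimes(\Delta[n]/\partial\Delta[n])\cong Z_nB\otimes S^n\simeq\Sigma^n Z_nB,
\]
where the last equivalence iterates the identification $\Sigma(-)\simeq(-)\otimes S^1$ for cofibrant objects and uses $S^n\cong(S^1)^{\wedge n}$. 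Now $Z_nB$ is a coproduct of the cofibrant values $X_{i_n}$, so $\gamma\Sigma^n Z_nB\cong\Sigma^n\gamma Z_nB$ is a coproduct of $n$\nobreakdash-fold suspensions of objects of~$\cC$, hence lies in $\cC$ by closure under suspension and coproducts. Since $\gamma F_{n-1}\in\cC$ by the inductive hypothesis, closure of $\cC$ under extensions applied to the triangle $\gamma F_{n-1}\to\gamma F_n\to\gamma\Sigma^n Z_nB\to\Sigma\gamma F_{n-1}$ yields $\gamma F_n\in\cC$.

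Finally I would treat the sequential homotopy colimit. The maps $F_n\to F_{n+1}$ form a sequence of cofibrations between cofibrant objects, so in $\Ho(\cK)$ the object $\hocolim_n F_n$ is the cofibre of the shift map $\coprod_n F_n\to\coprod_n F_n$, fitting into the telescope triangle as in~\cite[Definition~1.6.4]{N1}. Because $\cC$ is closed under coproducts, $\coprod_n\gamma F_n\in\cC$; because it is closed under cofibres, $\hocolim_n\gamma F_n\in\cC$. Combined with the identification $\gamma\hocolim_IX\cong\hocolim_n F_n$ from~(\ref{filtration}), this gives $\gamma\hocolim_IX\in\cC$, as required.

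The main obstacle I expect is the cofibre computation in the inductive step: identifying the quotient $F_n/F_{n-1}$ with $Z_nB\otimes S^n$ requires Reedy cofibrancy of $B$ and a careful analysis of the pushout attaching the nondegenerate simplices in degree~$n$. Once that is in place, the passage from $Z_nB\otimes S^n$ to $\Sigma^n Z_nB$ and the bookkeeping of coproducts and suspensions are routine given the stated closure properties of semilocalizing subcategories.
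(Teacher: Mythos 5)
Your proof is correct and follows essentially the same route as the paper's: both reduce to the skeletal filtration $F_n=|\sk_n B|$ of~(\ref{filtration}), identify the cofibre of $F_{n-1}\to F_n$ with $Z_nB\otimes S^n\simeq\Sigma^n Z_nB$ via the latching decomposition~(\ref{degenerate}), run the induction using closure of $\cC$ under coproducts, suspension and extensions, and finish with the telescope triangle for the sequential colimit of cofibrations. The only cosmetic differences are that the paper obtains the relevant cofibration from SM7 applied to the pushout~(\ref{latch}) and justifies $\gamma\hocolim_n F_n\cong\gamma\colim_n F_n$ via Lemma~\ref{cofibrantsequences}, whereas you invoke the left Quillen property of realization on skeleta and cite the telescope triangle directly.
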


\begin{proof}
Let $B=B_IX$ be the simplicial replacement of~$X$, as in (\ref{defhocolim}), and let
$F_n=|\sk_n B|$. As explained in \cite[VII.3.8]{GJ} or \cite[5.2]{GoS}, since realization
commutes with colimits, there is a natural pushout diagram
\begin{equation}
\label{latch}
\xymatrix{
(B_n\otimes\partial\Delta[n]_+)\coprod_{(L_nB\otimes\partial\Delta[n]_+)}(L_nB\otimes\Delta[n]_+) 
\ar[r] \ar[d] &
B_n\otimes\Delta[n]_+ \ar[d] \\
F_{n-1} \ar[r] & F_n \rlap{.}
}
\end{equation}

According to part~(a) of Lemma~\ref{invariance},
since the diagram $X$ is objectwise cofibrant, $B$ is Reedy cofibrant.
Hence, by Quillen's SM7 axiom for a simplicial model category
\cite[II.3.12]{GJ}, the upper arrow in (\ref{latch}) is a cofibration.
Therefore, the cofibre $F_n/F_{n-1}$ is isomorphic to the
cofibre of the upper arrow in~(\ref{latch}), which is isomorphic to
$Z_nB\otimes S^n$ if we write, as in~(\ref{degenerate}), $B_n=L_nB\coprod Z_nB$,
where $Z_nB$ contains the nondegenerate summands of~$B_n$. 
Hence, the sequence
\[
\xymatrix{
\gamma F_{n-1} \ar[r] & \gamma F_n \ar[r] & \Sigma^n \gamma Z_nB
}
\]
is part of a triangle in~$\Ho(\cK)$. Since $Z_nB$ is a coproduct
of objects $X_i$ with $i\in I$, it follows inductively that $\gamma F_n\in\cC$ for all~$n$.

Since $F_{n-1}\to F_n$ is a cofibration between cofibrant objects for every~$n$,
Lemma~\ref{cofibrantsequences} implies that
\[
\gamma\hocolim_n F_n\cong\gamma\colim_n F_n,
\]
and $\gamma\colim_n F_n$ is a cofibre of a morphism
$\coprod_n \gamma F_n\to\coprod_n \gamma F_n$ in~$\Ho(\cK)$, namely the difference between the identity
and the shift map, from which it follows that $\gamma\hocolim_n F_n$ is in~$\cC$, because
$\cC$ is semilocalizing.
Since, as observed in~(\ref{filtration}), $\hocolim_n F_n\simeq\hocolim_I X$, the claim is proved.
\end{proof}

\begin{coro}
\label{lambdafiltered}
If $\cK$ is a pointed simplicial combinatorial model category 
and we denote by $\gamma\colon\cK\to\Ho(\cK)$ the canonical functor, then
there is a regular cardinal $\lambda$ such that:
\begin{itemize}
\item[{\rm (a)}] 
For every $\lambda$\nobreakdash-filtered 
objectwise cofibrant diagram $X\colon I\to\cK$,
the natural morphism $\hocolim_I X\to\colim_I X$ is a weak equivalence.
\item[{\rm (b)}]
If $\cK$ is stable and $\cC$ is a semilocalizing subcategory of~$\Ho(\cK)$,
then, for every $\lambda$\nobreakdash-filtered diagram
$X\colon I\to\cK$ with $\gamma X_i\in\cC$ for all~$i$, we have
$\gamma\colim_IX\in\cC$.
\end{itemize}
\end{coro}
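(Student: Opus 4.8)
The plan is to establish (a) first and then obtain (b) by a cofibrant-replacement argument that chains (a) with Proposition~\ref{hocolim} and the combinatorial colimit-invariance property recalled above. For both parts I would take $\lambda$ to be the regular cardinal furnished by that property (\cite[Proposition~2.3]{Dug1}): for $\lambda$\nobreakdash-filtered $I$, any objectwise weak equivalence of diagrams $I\to\cK$ induces a weak equivalence on $\colim_I$. For (a), given an objectwise cofibrant $X\colon I\to\cK$ with $I$ $\lambda$\nobreakdash-filtered, I would form the functorial projectively cofibrant replacement $\widetilde X\to X$ of \eqref{resolution}, which is an objectwise weak equivalence and has $\widetilde X$ objectwise cofibrant (each $\widetilde X_j$ is a homotopy colimit of an objectwise cofibrant diagram, hence cofibrant). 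Since the comparison \eqref{comparison} is natural in the diagram, the square
\[
\xymatrix{
\hocolim_I \widetilde X \ar[r] \ar[d] & \colim_I \widetilde X \ar[d] \\
\hocolim_I X \ar[r] & \colim_I X
}
\]
commutes. Here the top arrow is a weak equivalence because $\widetilde X$ is projectively cofibrant (the discussion around \eqref{map}); the left arrow is a weak equivalence by Lemma~\ref{invariance}(b), as $\widetilde X\to X$ is an objectwise weak equivalence between objectwise cofibrant diagrams; and the right arrow is a weak equivalence by the choice of~$\lambda$. Then two-out-of-three forces the bottom arrow $\hocolim_I X\to\colim_I X$ to be a weak equivalence, which is exactly (a).

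For (b) I would no longer assume $X$ objectwise cofibrant, so I would first compose $X$ with a functorial cofibrant replacement in $\cK$ to get an objectwise cofibrant diagram $QX\colon I\to\cK$ with an objectwise weak equivalence $QX\to X$. Note that $QX$ is still $\lambda$\nobreakdash-filtered and that $\gamma(QX)_i\cong\gamma X_i\in\cC$ for all~$i$, since $\cC$ is isomorphism-closed. Then Proposition~\ref{hocolim} gives $\gamma\hocolim_I QX\in\cC$; part~(a) yields $\gamma\hocolim_I QX\cong\gamma\colim_I QX$, so $\gamma\colim_I QX\in\cC$; and the combinatorial property applied to $QX\to X$ gives $\colim_I QX\simeq\colim_I X$, whence $\gamma\colim_I X\cong\gamma\colim_I QX\in\cC$.

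The principal work is concentrated in part~(a), and within it in correctly invoking the two inputs that control the top and right sides of the square: Dugger's homotopy-invariance of $\lambda$\nobreakdash-filtered strict colimits, and the fact that $\hocolim_I\to\colim_I$ is a weak equivalence on projectively cofibrant diagrams. The point that most needs care is verifying that the displayed square is genuinely the one induced by naturality of \eqref{comparison} in the morphism $\widetilde X\to X$, so that the three maps I control are exactly three sides of a commuting square whose fourth side is the comparison map I want to analyse. Once (a) is secured, (b) is a formal chaining of Proposition~\ref{hocolim}, part~(a), and the colimit-invariance, the only routine verifications being that passing to $QX$ preserves both $\lambda$\nobreakdash-filteredness and membership of the vertices in~$\cC$.
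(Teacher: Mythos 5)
Your proposal is correct, and your part (b) is exactly the paper's argument: compose with a cofibrant replacement, apply Proposition~\ref{hocolim}, part (a), and the colimit-invariance property. In part (a), however, you diverge from the paper at the decisive step, and the comparison is instructive. Both you and the paper take $\lambda$ from Dugger's colimit-invariance property and both use the projectively cofibrant resolution $\widetilde X\to X$ of \eqref{resolution}. At that point the paper invokes the canonical \emph{isomorphism} \eqref{colimhocolim}, $\colim_I\widetilde X\cong\hocolim_I X$ (a consequence of realization commuting with colimits), so the single weak equivalence $\colim_I\widetilde X\to\colim_I X$ supplied by the choice of $\lambda$ already is, up to isomorphism, the comparison map $\hocolim_I X\to\colim_I X$, and (a) is finished in one line. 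You instead run the standard derived-functor argument: a commuting square plus two-out-of-three, which requires three inputs rather than one --- that the comparison map \eqref{map} is a weak equivalence on projectively cofibrant diagrams (via the two-variable left Quillen property), homotopy invariance of $\hocolim_I$ on objectwise cofibrant diagrams (Lemma~\ref{invariance}(b)), and the colimit-invariance from the choice of $\lambda$ --- together with naturality of \eqref{comparison} in the diagram, which does hold because the comparison map is induced by the weight map $N(-\downarrow I)^{\rm op}_+\to S$. All of these facts are established in the paper's review subsection, so your argument is sound; it is simply longer, and the extra length comes precisely from treating $\colim_I\widetilde X$ as merely weakly equivalent to $\hocolim_I X$ rather than canonically isomorphic to it. What your version buys in exchange is independence from the specific resolution: it would work verbatim with any projectively cofibrant replacement of $X$, whereas the paper's shortcut is tied to the explicit bar construction $\widetilde X$.
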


\begin{proof}
By \cite[Proposition~7.3]{Dug1}, for a combinatorial category $\cK$ there is a
regular cardinal $\lambda$ such that $\lambda$\nobreakdash-filtered colimits
of weak equivalences are weak equivalences.
Let $X\colon I\to\cK$ be an objectwise cofibrant diagram where $I$ is $\lambda$\nobreakdash-filtered.
Let $\widetilde X\to X$ be the objectwise weak equivalence defined in~(\ref{resolution}).
Then, by our choice of~$\lambda$, the induced morphism 
\[
\colim_I \widetilde X\longrightarrow\colim_I X
\]
is a weak equivalence. Since $\colim_I \widetilde X\cong\hocolim_I X$ by~(\ref{colimhocolim}),
part~(a) is proved.

Now let $X\colon I\to \cK$ be any diagram where $I$ is $\lambda$\nobreakdash-filtered,
and let $Q$ be a cofibrant replacement functor in~$\cK$. 
 From our choice of $\lambda$ we infer that $\colim_I X$ is weakly equivalent 
to $\colim_I QX$ and hence to $\hocolim_I QX$, by part~(a).
Therefore, if $\cK$ is stable, then for every semilocalizing subcategory $\cC$ of $\Ho(\cK)$ 
it follows from Proposition~\ref{hocolim} that $\gamma\colim_I X\in\cC$
if $\gamma X_i\in\cC$ for all~$i\in I$.
\end{proof}

We emphasize that the cardinal $\lambda$ in the statement of Corollary~\ref{lambdafiltered}
depends only on~$\cK$, not on the subcategory~$\cC$.

The following is another useful property of triangulated categories with models.
A~special case is discussed in~\cite[Remark~2.2.8]{HPS}. 
(The assumption that $\cK$ be simplicial is not really necessary
here nor in Corollary~\ref{lambdafiltered}, since 
homotopy colimits can be used, with the same basic properties, 
in all model categories; see \cite[Chapter~19]{Hi}.)

\begin{lemma}
\label{hocolims}
Let $\cK$ be a stable simplicial model category and denote by $\gamma\colon\cK\to\Ho(\cK)$ the canonical functor.
Let $I$ be any small category such that the projective model structure exists on~$\cK^I$.
Suppose given morphisms of objectwise cofibrant diagrams $X\to Y\to Z$ in $\cK^I$ such that
$\gamma X\to \gamma Y\to \gamma Z$ is part of a triangle in~$\Ho(\cK^I)$. Then
\[
\xymatrix{
\gamma\hocolim_{I} X \ar[r] & \gamma\hocolim_{I} Y
\ar[r] & \gamma\hocolim_{I} Z
}
\]
is part of a triangle in~$\Ho(\cK)$.
\end{lemma}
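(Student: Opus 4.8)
The plan is to realize the sequence $\gamma\hocolim_I X\to\gamma\hocolim_I Y\to\gamma\hocolim_I Z$ as the image of the given triangle under the total left derived functor of the left Quillen functor $\colim_I\colon\cK^I\to\cK$. As recalled above, $\colim_I$ is left Quillen when $\cK^I$ carries the projective model structure, so its total left derived functor $L\colim_I\colon\Ho(\cK^I)\to\Ho(\cK)$ exists; moreover, since $\cK$ is stable and suspension and loop are formed objectwise, the projective model structure on $\cK^I$ is itself stable, so that $\Ho(\cK^I)$ is triangulated (consistent with the hypothesis that a triangle be given there). The first key point is that $L\colim_I$ is \emph{exact}: as the total left derived functor of a left Quillen functor between stable model categories, it preserves the zero object, commutes with $\Sigma$ up to natural isomorphism, and sends homotopy cofibre sequences to homotopy cofibre sequences, hence triangles to triangles.

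First I would reduce to projectively cofibrant diagrams, so that $L\colim_I$ can be computed by $\colim_I$ itself. Applying the \emph{functorial} projectively cofibrant replacement $W\mapsto\widetilde W=|B_{(I\downarrow-)}W|$ of~\eqref{resolution} to the whole sequence produces a commutative diagram $\widetilde X\to\widetilde Y\to\widetilde Z$ together with natural objectwise weak equivalences $\widetilde X\to X$, $\widetilde Y\to Y$ and $\widetilde Z\to Z$. These become isomorphisms after applying $\gamma$, so $\gamma\widetilde X\to\gamma\widetilde Y\to\gamma\widetilde Z$ is isomorphic to the given sequence and is therefore part of a triangle in $\Ho(\cK^I)$.

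Next I would apply $L\colim_I$. Since $\widetilde X,\widetilde Y,\widetilde Z$ are projectively cofibrant, $L\colim_I$ agrees with $\gamma\circ\colim_I$ on them, and exactness yields that
\[
\gamma\colim_I\widetilde X\longrightarrow\gamma\colim_I\widetilde Y\longrightarrow\gamma\colim_I\widetilde Z
\]
is part of a triangle in $\Ho(\cK)$. The natural isomorphism $\colim_I\widetilde W\cong\hocolim_I W$ of~\eqref{colimhocolim} then identifies this with $\gamma\hocolim_I X\to\gamma\hocolim_I Y\to\gamma\hocolim_I Z$, which is thus part of a triangle, as desired.

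I expect the main obstacle to be the justification that $L\colim_I$ is exact, i.e.\ that the total left derived functor of a left Quillen functor between stable model categories preserves triangles. This is standard, following from the facts that $\colim_I$ preserves the terminal object and homotopy pushouts of projectively cofibrant diagrams (hence homotopy cofibre sequences) and that in a stable model category $\Sigma$ is the homotopy cofibre of the map to the point; the one point needing care is that the comparison $\Sigma\circ L\colim_I\Rightarrow L\colim_I\circ\Sigma$ be invertible, which holds because $\Sigma$ is invertible in the stable setting. A self-contained alternative avoids the abstract statement: factor $\widetilde X\to\widetilde Y$ as a projective cofibration followed by a trivial fibration, so that $\widetilde Z$ is weakly equivalent to the honest cofibre; since $\colim_I$ preserves cofibrations, pushouts and the zero object, it carries this cofibre sequence to a cofibre sequence in $\cK$, which is a triangle in $\Ho(\cK)$ by stability. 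Either way, the residual checks---naturality of~\eqref{colimhocolim} and the fact that objectwise weak equivalences of objectwise cofibrant diagrams induce isomorphisms after $\gamma\hocolim_I$ (cf.\ part~(b) of Lemma~\ref{invariance})---are routine.
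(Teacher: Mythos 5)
Your proposal is correct and follows essentially the same route as the paper's own proof: the paper likewise observes that $\colim_I$ is left Quillen for the projective model structure on $\cK^I$ and invokes the fact (Hovey, Proposition~6.4.1) that its total left derived functor preserves triangles, which is exactly the exactness statement you spell out and re-derive via the functorial projectively cofibrant replacement and the identification $\colim_I\widetilde X\cong\hocolim_I X$. The extra details you supply (stability of the projective structure, the replacement step, naturality of the comparison) are correct fillings-in of what the paper leaves to the cited reference.
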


\begin{proof}
Since $\colim_I$ is left Quillen if the projective model structure is considered on~$\cK^I$,
its total left derived functor preserves triangles, as shown in \cite[Proposition~6.4.1]{Ho1}.

Alternatively, this result follows from the fact that homotopy colimits commute,
since, by assumption, $Z$ is weakly equivalent to the homotopy cofibre in $\cK^I$
of the given morphism $X\to Y$, i.e., the homotopy pushout of $*\leftarrow X\to Y$,
and $\hocolim_I$ is homotopy invariant on objectwise cofibrant diagrams.
\end{proof}

Special cases or variants of the next result have been described in
\cite[Theorem~3.4]{AJS1} for derived categories of Grothendieck categories; in 
\cite[Proposition~III.2.6]{BR} for compactly generated torsion pairs;
in \cite[Proposition~2.3.1]{HPS} for algebraic stable homotopy categories;
in \cite[Theorem~3.1]{KN} for derivators;
and in \cite[Proposition~16.1]{LuDAGI} for stable $\infty$\nobreakdash-categories.
The core of the argument was first used by Bousfield in~\cite{B3}. 

We note that, if the dual statement could be proved without large-cardinal assumptions, 
namely that singly generated colocalizing subcategories are reflective
in~ZFC, this would imply the existence of cohomological localizations of spectra in~ZFC,
a long-standing unsolved problem.

\begin{theo}
\label{singlygen}
If $\cK$ is a stable combinatorial model category,
then every singly generated semilocalizing subcategory of $\Ho(\cK)$
is coreflective.
\end{theo}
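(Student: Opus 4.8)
The plan is to set $\cT=\Ho(\cK)$ and write $\cC=\semiloc(A)$ for the given generator~$A$. By Proposition~\ref{Dugger} we may assume $\cK$ is simplicial, and by~\eqref{closure} we have $\cC^{\,\rsemiperp}=A^{\,\rsemiperp}$, so Proposition~\ref{coreflecting} reduces the theorem to the following assertion: for every object $X$ there is a triangle $CX\to X\to LX\to\Sigma CX$ with $CX\in\cC$ and $LX\in A^{\,\rsemiperp}$. Applying the homological functors $\cT(\Sigma^j A,-)$ to such a triangle and inspecting the resulting long exact sequence, one sees that $LX\in A^{\,\rsemiperp}$, i.e.\ $\cT(\Sigma^k A,LX)=0$ for all $k\ge 0$, holds as soon as we exhibit an object $CX\in\cC$ and a morphism $c\colon CX\to X$ whose induced map $c_*\colon\cT(\Sigma^j A,CX)\to\cT(\Sigma^j A,X)$ is surjective for $j\ge 0$ and injective for $j\ge -1$; we then take $LX$ to be the cofibre of~$c$.

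First I would fix a regular cardinal $\kappa\ge\lambda$ (with $\lambda$ as in Corollary~\ref{lambdafiltered}) for which $A$ is $\kappa$\nobreakdash-presentable in~$\cK$, take $X$ fibrant, and build by transfinite recursion a $\kappa$\nobreakdash-indexed sequence of objectwise cofibrant objects $X_0\to X_1\to\cdots$ in~$\cK$ together with compatible morphisms $c_s\colon X_s\to X$. I would start with $X_0=\coprod_{j\ge 0}\coprod_{f}\Sigma^j A$, the coproduct over all $f\colon\Sigma^j A\to X$, and $c_0$ the evaluation map. At a successor stage I would kill every \emph{relation}, that is, every $g\colon\Sigma^j A\to X_s$ with $j\ge -1$ and $c_s\circ g=0$, by passing to the homotopy cofibre of $\coprod g$; this cofibre again lies in~$\cC$, being a cofibre of a map between objects of $\cC$ when $j\ge 0$ and, when $j=-1$, an extension of $A$ by $X_s$ (the triangle $X_s\to\operatorname{cofib}\to A\to\Sigma X_s$), using closure of $\cC$ under cofibres and extensions. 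At limit stages I would take colimits along these maps, arranged to be cofibrations so that the sequence is projectively cofibrant by Lemma~\ref{cofibrantsequences}. Each $X_s$ lies in~$\cC$ by Lemma~\ref{hocolims} and Proposition~\ref{hocolim}, and I would set $CX=\hocolim_{s<\kappa}X_s$, which equals $\colim_{s<\kappa}X_s$ by Corollary~\ref{lambdafiltered}(a) and lies in~$\cC$ by Corollary~\ref{lambdafiltered}(b).

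The remaining convergence step is the main obstacle. Everything hinges on the fact that every morphism $\Sigma^j A\to CX$ factors through some stage~$X_s$. For $j\ge 0$ this is where $\kappa$\nobreakdash-presentability of $A$ in~$\cK$ is used: by Corollary~\ref{lambdafiltered}(a) the $\kappa$\nobreakdash-filtered homotopy colimit computing $CX$ agrees with the ordinary colimit, so a map out of $\Sigma^j A$ factors through a stage; for $j=-1$ one rewrites $\cT(\Sigma^{-1}A,CX)\cong\cT(A,\Sigma CX)$ and uses that suspension, being a left adjoint, commutes with homotopy colimits, reducing to the case $j=0$. The delicate point inside this is reconciling the purely $1$\nobreakdash-categorical presentability of $A$ in $\cK$ with homotopy classes of maps into $CX$: one must ensure that the $\kappa$\nobreakdash-filtered colimit is simultaneously a homotopy colimit (Corollary~\ref{lambdafiltered}(a) on cofibrant diagrams) and fibrant enough to represent maps out of $\Sigma^j A$, for which one invokes that, for $\kappa$ large, $\kappa$\nobreakdash-filtered colimits of fibrant objects are fibrant in a combinatorial model category. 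Granting the factorization, surjectivity of $c_*$ for $j\ge 0$ holds because every $f$ is already hit at stage~$0$, while injectivity for $j\ge -1$ holds because any kernel class factors as a relation $g'$ through some~$X_s$ with $c_s\circ g'=0$, which was killed at stage~$s+1$, so the class vanishes in $CX$. Thus the cofibre $LX$ of $c$ lies in $A^{\,\rsemiperp}=\cC^{\,\rsemiperp}$, and Proposition~\ref{coreflecting} produces the desired semiexact coreflection onto~$\cC$.
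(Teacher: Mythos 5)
Your argument is correct, but it runs the construction in the opposite direction from the paper's, and the comparison is instructive. The paper builds a nullification tower \emph{under} $X$: starting from $Y_0=X$ it cones off \emph{all} maps from cofibrant models $B_n$ of $\Sigma^nA$ ($n\ge 0$), sets $LX=\colim_{i<\lambda}Y_i$, and recovers $CX$ as the homotopy fibre of $X\to Y$ taken at the level of diagrams in $\cK^{\lambda}$; membership $\gamma CX\in\semiloc(A)$ then comes from the octahedral axiom (the triangles $\gamma QF_i\to\gamma QF_{i+1}\to\gamma W_i$) together with Proposition~\ref{hocolim} at limit stages, and $\gamma LX\in A^{\,\rsemiperp}$ from $\lambda$\nobreakdash-presentability of the $B_n$ plus a fibrant replacement functor preserving $\lambda$\nobreakdash-filtered colimits. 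You instead build a cellularization-type tower of objects of $\cC$ \emph{over} $X$, kill only \emph{relations}, and obtain $LX$ as a cofibre at the end; your long-exact-sequence reduction (surjectivity of $c_*$ for $j\ge 0$, injectivity for $j\ge -1$) is right, and your observation that relations from $\Sigma^{-1}A$ must also be killed, with the resulting cofibres staying in $\cC$ by closure under \emph{extensions}, is exactly the correct substitute for the paper's octahedral step. What the paper's orientation buys is strictness for free: its structure maps $w_i\colon X\to Y_i$ are defined by composition, whereas your compatible maps $c_s\colon X_s\to X$ require, at each successor stage, a choice of null-homotopies of $c_s\circ g$ (available because $X$ is fibrant and the attached objects are cofibrant) to extend $c_s$ over the cone at the point-set level, and one must interleave fibrant replacements by trivial cofibrations---the same dance the paper performs for its $Y_i$---so that each stage is fibrant-and-cofibrant, the transition maps are cofibrations (Lemma~\ref{cofibrantsequences}), and the $c_s$ extend by lifting; these points should be made explicit. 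Your convergence mechanism is also sound: the claim that $\kappa$\nobreakdash-filtered colimits of fibrant objects are fibrant in a combinatorial model category, for $\kappa$ large, is correct because fibrancy is the right lifting property against the \emph{set} of generating trivial cofibrations, whose domains may be taken $\kappa$\nobreakdash-presentable; this plays the role of the paper's functor $R$. Finally, what your route buys is a direct construction of $C$ as a cellularization-with-extensions functor, whereas the paper's route directly exhibits $L$ as the nullification $P_A$ of \cite{B1} and \cite{DF}.
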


\begin{proof}
By Proposition~\ref{Dugger}, we may assume that $\cK$ is simplicial.
Let $\gamma\colon\cK\to\Ho(\cK)$ denote the canonical functor.
Let $\cC$ be a semilocalizing subcategory of $\Ho(\cK)$ and suppose that $\cC=\semiloc(A)$ for some object~$A$.
Pick, for each $n\ge 0$, a cofibrant object $B_n$ in~$\cK$ such that $\gamma B_n\cong\Sigma^nA$, and
choose a regular cardinal $\lambda$ and a fibrant replacement functor $R$ in $\cK$ such that:
\begin{itemize}
\item[(i)]
$B_n$ is $\lambda$\nobreakdash-presentable for every $n\ge 0$;
\item[(ii)]
all $\lambda$\nobreakdash-filtered colimits
of weak equivalences are weak equivalences;
\item[(iii)]
the functor $R$ preserves $\lambda$\nobreakdash-filtered colimits.
\end{itemize}
This is possible according to \cite[Proposition~2.3]{Dug1} and \cite[Proposition~7.3]{Dug1}, 
due to the assumption that $\cK$ is combinatorial.

In order to construct a coreflection onto~$\cC$,
we proceed similarly as in \cite[Proposition~1.5]{B3} or as in the proof of \cite[Proposition~2.3.17]{HPS}.
For any object $X$ of~$\cK$ ---which we assume fibrant and cofibrant---, take $Y_0=X$ and let 
$W_0$ be a coproduct of copies of $B_n$ for $n\ge 0$ indexed by all morphisms in $\cK(B_n,Y_0)$.
Let $u_0\colon W_0\to Y_0$ be given by $f\colon B_n\to Y_0$ on the summand corresponding to~$f$.

Next, let $Y_1$ be the homotopy cofibre of~$u_0$. More precisely,
factor $u_0$ into a cofibration $\widetilde u_0\colon W_0\to\widetilde Y_0$ followed by a trivial fibration
$\phi_0\colon\widetilde Y_0\to Y_0$;
let $Y_1'$ be the pushout of $\widetilde u_0$ and $W_0\to *$, 
and let $Y_1'\to Y_1''$ be a trivial cofibration with $Y_1''$ fibrant.
Since $Y_0$ is cofibrant, there is a left inverse $Y_0\to\widetilde Y_0$ to $\phi_0$
and hence a morphism $Y_0\to Y_1''$, which we factor again into a cofibration $v_0\colon Y_0\to Y_1$
followed by a trivial fibration $Y_1\to Y_1''$. Thus it follows from our choices that $Y_1$ is both 
fibrant and cofibrant, and 
\[
\xymatrix{
W_0\ar[r]^{u_0} & Y_0 \ar[r]^{v_0} & Y_1
}
\]
yields a triangle in~$\Ho(\cK)$, since 
\[
\xymatrix{
\gamma W_0\ar[r]^{\gamma u_0} & \gamma Y_0 \ar[r]^{\gamma v_0} & \gamma Y_1
}
\quad \mbox{is isomorphic to} \quad
\xymatrix{
\gamma W_0\ar[r]^{\gamma \widetilde u_0} & \gamma{\widetilde Y}_0 \ar[r] & \gamma Y_1',
}
\]
which is a canonical triangle.

Now repeat the process with $Y_1$ in the place of~$Y_0$.
In this way we construct inductively, for every ordinal~$i$, a sequence
\begin{equation}
\label{triangles}
\xymatrix{
W_i\ar[r]^{u_i} & Y_i \ar[r]^{v_i} & Y_{i+1}
}
\end{equation}
yielding a triangle in~$\Ho(\cK)$,
where $Y_{i+1}$ is fibrant and cofibrant, $v_i$ is a cofibration, and $W_i$ is a coproduct 
of copies of $B_n$ for $n\ge 0$, together with a morphism $w_{i+1}\colon X\to Y_{i+1}$ such
that $w_{i+1}=v_i\circ w_i$ (with $w_1=v_0$). 
If $\alpha$ is a limit ordinal, take $Z_{\alpha}=\colim_{i<{\alpha}}Y_i$,
and let $Z_{\alpha}\to Y_{\alpha}$ be a trivial cofibration with $Y_{\alpha}$ fibrant.
Since every (possibly transfinite) composition of cofibrations is a cofibration,
the morphism $X\to Z_{\alpha}$ given by $w_i$ for $i<{\alpha}$ is a cofibration, and hence
the composite $w_{\alpha}\colon X\to Y_{\alpha}$ is also a cofibration.

Let $Y\colon\lambda\to\cK$ be the diagram given by the objects $Y_i$ and the maps $v_i$ for $i<\lambda$.
Then $Y$ is cofibrant in $\cK^{\lambda}$, by Lemma~\ref{cofibrantsequences}, and
the constant diagram $X\colon\lambda\to\cK$ at the object $X$ is also cofibrant in~$\cK^{\lambda}$.
Let $F$ be the homotopy pullback of the map $X\to Y$ given by the morphisms $w_i$ 
and the trivial map $*\to Y$ in~$\cK^{\lambda}$.
Thus,
\[
\xymatrix{
\gamma F\ar[r] & \gamma X \ar[r] & \gamma Y
}
\]
is part of a triangle in $\Ho(\cK^{\lambda})$, since $\cK^{\lambda}$ is stable. 
Let $Q$ be a cofibrant replacement functor in $\cK$,
and let $QF$ be the composite of $Q$ and $F$. Thus $QF$ is objectwise cofibrant and,
by Lemma~\ref{hocolims},
\begin{equation}
\label{hotriangle}
\xymatrix{
\gamma\hocolim_{i<\alpha} QF_i\ar[r] & \gamma X \ar[r] & \gamma\hocolim_{i<\alpha} Y_i
}
\end{equation}
is part of a triangle for each limit ordinal $\alpha \le \lambda$.

By the octahedral axiom in~$\Ho(\cK)$ and (\ref{triangles}), there is a triangle 
\[
\xymatrix{
\gamma QF_i\ar[r] & \gamma QF_{i+1}\ar[r] & \gamma W_i
}
\] 
for each ordinal~$i$.
Therefore, it follows from transfinite induction that $\gamma QF_i\in\semiloc(A)$ for 
all ordinals~$i$, since each $\gamma W_i$ is constructed from $A$ by
means of suspensions and coproducts. If $\alpha$ is a limit ordinal,
then $\gamma Y_{\alpha}\cong\gamma\colim_{i<\alpha}Y_i\cong\gamma\hocolim_{i<\alpha}Y_i$
because $Y$ is cofibrant, and it then follows from (\ref{hotriangle}) that
$\gamma QF_{\alpha}\cong \gamma \hocolim_{i<\alpha} QF_i$, which is in $\semiloc(A)$
by Proposition~\ref{hocolim}.

Let $CX=\colim_{i<\lambda} QF_i$ and let $LX=\colim_{i<\lambda} Y_i$,
and note that the natural morphisms $\hocolim_{i<\lambda} QF_i\to CX$
and $\hocolim_{i<\lambda} Y_i\to LX$ are weak equivalences, by part~(a) of 
Corollary~\ref{lambdafiltered} and by our choice of~$\lambda$. By~Lemma~\ref{hocolims}, the sequence 
\[
\xymatrix{
\gamma CX\ar[r] & \gamma X\ar[r] & \gamma LX
}
\] 
is part of a triangle in~$\Ho(\cK)$. 
By~Proposition~\ref{hocolim}, $\gamma CX\in\semiloc(A)$.

Again by~our choice of~$\lambda$, we have
$RLX\cong \colim_{i<\lambda} RY_i$.
Now every $f\colon \Sigma^n A\to \gamma LX$ in $\Ho(\cK)$ 
can be lifted to a morphism $\widetilde f\colon B_n\to RLX$ in~$\cK$, as $RLX$ is fibrant.
Once more by our choice of~$\lambda$,
this morphism $\widetilde f$ factors through $RY_k$ for some $k<\lambda$, since $B_n$ is $\lambda$\nobreakdash-presentable.
Since~(\ref{triangles}) yields a triangle for all~$i$, the composite 
\[
\xymatrix{
\Sigma^n A\ar[r] & \gamma RY_k\ar[r] & \gamma RY_{k+1}
}
\] 
is zero.
This implies that $f\colon \Sigma^n A\to \gamma LX$ is zero.
Therefore, $\gamma LX\in A^{\,\rsemiperp}$, and, by~(\ref{closure}), $A^{\,\rsemiperp}=\semiloc(A)^{\rsemiperp}$.
This proves that $C$ is a coreflection onto~$\cC$, using Proposition~\ref{coreflecting}.
\end{proof}

We note that the reflection $L$ obtained in the previous proof is a nullification
$P_A$ in the sense of \cite{B1} and~\cite{DF}, and the subcategory $\cC$ is thus the closure under extensions of the
class of $A$\nobreakdash-cellular objects.

We have given the argument in full detail to stress the fact that it works for \emph{semilocalizing} subcategories.
It then also works for localizing subcategories, since, if $\cC$ is generated by an object $A$ as a localizing
subcategory, then it is generated by $\coprod_{n\le 0}\Sigma^nA$ as a semilocalizing subcategory.
However, in the case of a localizing subcategory, there is an alternative, much shorter proof of Theorem~\ref{singlygen}
which does not require the existence of models. Instead, it is based on Brown representability. 
A~similar argument can be found in \cite[Theorem~7.2.1]{Kr}.

\begin{propo}
\label{neeman}
Let $\cT$ be a well-generated triangulated category with coproducts. Then
every singly generated localizing subcategory of $\cT$ is coreflective.
\end{propo}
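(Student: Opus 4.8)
The plan is to deduce the statement from Brown representability, avoiding any appeal to models. Write $\cC=\loc(A)$ for the singly generated localizing subcategory in question. Since $\cC$ is closed under coproducts formed in $\cT$ and is full, coproducts in $\cC$ are computed as in~$\cT$, so the inclusion $\iota\colon\cC\hookrightarrow\cT$ is a coproduct-preserving triangulated functor. A coreflection onto $\cC$ amounts exactly to a right adjoint of $\iota$ (postcomposed with~$\iota$); thus it suffices to produce a right adjoint for~$\iota$. By Neeman's adjoint functor theorem \cite{N1}, a coproduct-preserving triangulated functor whose \emph{source} satisfies Brown representability admits a right adjoint. Hence everything reduces to checking that $\cC$ itself satisfies Brown representability.

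First I would show that $\cC$ is well generated. This is the one substantive input: a localizing subcategory of a well-generated triangulated category that is generated by a set of objects (here, by the single object~$A$) is again well generated; see \cite{N1} and \cite[Theorem~7.2.1]{Kr}. Granting this, $\cC$ satisfies Brown representability, since every well-generated triangulated category does by Neeman's representability theorem \cite{N1}.

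It then follows from the adjoint functor theorem that $\iota\colon\cC\to\cT$ has a right adjoint $G\colon\cT\to\cC$, and the composite $C=\iota\circ G\colon\cT\to\cT$ is the desired coreflection onto~$\cC$; equivalently, by \cite[Proposition~9.1.18]{N1}, the Verdier functor $\cT\to\cT/\cC$ acquires a right adjoint. The main obstacle is precisely the hereditary property used in the first step---that $\loc(A)$ inherits well-generation from~$\cT$---which is where all the real work of Neeman's theory is concentrated; once Brown representability is available for~$\cC$, the production of the coreflection is purely formal. I expect the whole argument to be only a few lines, in contrast with the model-categorical proof of Theorem~\ref{singlygen}, since here the analytic control of the coreflection via explicit homotopy colimits is replaced by the existence theorem packaged in Brown representability.
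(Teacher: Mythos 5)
Your proof is correct, but it takes a genuinely different route from the paper's. The paper never shows (or uses) that $\cC=\loc(A)$ is well generated; instead it works with the Verdier quotient. By \cite[Proposition~9.1.18]{N1}, a right adjoint to the inclusion $\cC\hookrightarrow\cT$ is equivalent to a right adjoint to the quotient functor $F\colon\cT\to\cT/\cC$; since $A\in\cT^{\alpha}$ for some infinite cardinal~$\alpha$, \cite[Corollary~4.4.3]{N1} guarantees that $\cT/\cC$ has small hom-sets, and the adjoint $G$ is then defined by representing the functors $(\cT/\cC)(F(-),X)$ via Brown representability \emph{in the ambient category}~$\cT$, which is available by \cite[Proposition~8.4.2]{N1} because $\cT$ is well generated. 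You instead stay inside the subcategory: your essential input is the inheritance theorem that $\loc(A)$ is itself well generated, after which Brown representability \emph{for $\cC$} together with Neeman's adjoint functor theorem produces the right adjoint to the inclusion directly, with no mention of the quotient. Both are legitimate few-line arguments resting on the same circle of results, but they buy different things: your version bypasses Verdier quotients and the small hom-set question entirely, and yields as a bonus that $\cC$ is well generated; the paper's version needs only representability of $\cT$ (given by hypothesis) plus set-theoretic control of $\cT/\cC$, a somewhat lighter input than the inheritance theorem, and it makes visible the connection---stressed in the Introduction via the counterexample of \cite{CN}---between coreflectivity of $\cC$ and smallness of hom-sets in $\cT/\cC$. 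One caution on citations: \cite[Theorem~7.2.1]{Kr}, which you invoke for the inheritance property, in fact contains essentially the whole proposition (well-generation of the subcategory \emph{and} existence of the colocalization), so to avoid the appearance of circularity you should cite only its first assertion, or point instead to the corresponding result in \cite{N1}; this does not affect the correctness of your argument.
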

\begin{proof}
By \cite[Proposition 8.4.2]{N1}, the category $\cT$ satisfies
Brown representability, and $\cT=\cup_{\alpha}T^{\alpha}$, i.e., every object of $\cT$ is 
$\alpha$-compact for some infinite cardinal~$\alpha$.

Let $\cC$ be a localizing subcategory of $\cT$ generated by some object~$A$. 
Then $A\in\cT^{\alpha}$ for some infinite cardinal~$\alpha$. 
Hence, it follows from \cite[Corollary 4.4.3]{N1} that the Verdier quotient 
category $\cT/\cC$ has small hom-sets. 

The existence of a coreflection onto $\cC$ amounts to the existence of a right adjoint to the inclusion 
$\cC\hookrightarrow \cT$, and this is equivalent to the existence of a right adjoint to the functor 
$F\colon \cT\to \cT/\cC$ (see \cite[Proposition 9.1.18]{N1}). 
Since $\cT/\cC$ has small hom-sets, a right adjoint $G\colon \cT/\cC\to \cT$ can be defined
as follows. If $X$ is any object of $\cT/\cC$, then
$GX$ is obtained by Brown representability, namely $(\cT/\cC)(F(-),X)\cong \cT(-,GX)$.
\end{proof}

Recall from \cite[Proposition~6.10]{R} that, if $\cK$ is a stable combinatorial model category,
then $\Ho(\cK)$ is indeed well generated.

\subsection{Semilocalizing subcategories are singly generated}

We remark that
the way in which Vop\v{e}nka's principle is used in Theorem~\ref{mainthm2} below
is different from the way in which it was used in Section~\ref{RCS}.
What we need here is the fact that, by \cite[Theorem~6.6 and Corollary~6.18]{AR},
\emph{if Vop\v{e}nka's principle holds, then every full subcategory of a locally presentable
category closed under $\lambda$\nobreakdash-filtered colimits for some regular cardinal $\lambda$ is accessible.} 
The following argument was used similarly in \cite[Lemma~1.3]{CCh} and \cite[Lemma~1.3]{Cho}.

\begin{theo}
\label{mainthm2}
Let $\cK$ be a stable combinatorial model category.
If Vop\v{e}nka's principle holds, then every semilocalizing subcategory of $\Ho(\cK)$
is singly generated and coreflective.
\end{theo}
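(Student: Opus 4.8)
The plan is to deduce coreflectivity from single generation: once we know that an arbitrary semilocalizing subcategory $\cC\subseteq\Ho(\cK)$ is singly generated, Theorem~\ref{singlygen} immediately yields that $\cC$ is coreflective. So the entire content is to prove that \emph{every} semilocalizing subcategory is singly generated, and this is where Vop\v{e}nka's principle enters, through the fact recalled just above the statement that, under this axiom, a full subcategory of a locally presentable category closed under $\lambda$\nobreakdash-filtered colimits for some regular~$\lambda$ is accessible. By Proposition~\ref{Dugger} we may assume $\cK$ to be simplicial, so that Corollary~\ref{lambdafiltered} applies.

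First I would lift $\cC$ to the model category. Let $\gamma\colon\cK\to\Ho(\cK)$ be the canonical functor and let $\widetilde{\cC}$ be the full subcategory of $\cK$ consisting of those objects $X$ with $\gamma X\in\cC$. Corollary~\ref{lambdafiltered} provides a regular cardinal $\lambda$, \emph{depending only on~$\cK$}, such that the image under $\gamma$ of the colimit of any $\lambda$\nobreakdash-filtered diagram with values in $\cC$ again lies in~$\cC$; in particular $\widetilde\cC$ is closed under $\lambda$\nobreakdash-filtered colimits in the locally presentable category~$\cK$. Vop\v{e}nka's principle then makes $\widetilde\cC$ accessible, so there are a regular cardinal $\mu\ge\lambda$ and a \emph{set} $\cS$ of objects of $\widetilde\cC$ such that every object of $\widetilde\cC$ is a $\mu$\nobreakdash-filtered colimit of objects of~$\cS$. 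Since $\mu\ge\lambda$, these colimits are computed as in~$\cK$.

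Next I would produce the single generator. Put $A=\coprod_{S\in\cS}\gamma S$; this is a set-indexed coproduct, so it exists in $\Ho(\cK)$ and lies in~$\cC$ because $\cC$ is closed under coproducts. Each $\gamma S$ is a retract of~$A$, hence belongs to $\semiloc(A)$, and clearly $\semiloc(A)\subseteq\cC$. For the reverse inclusion I would exploit the uniformity of~$\lambda$: given any $X\in\widetilde\cC$, write $\gamma X=\gamma\colim_I D$ with $D\colon I\to\cK$ a $\mu$\nobreakdash-filtered (hence $\lambda$\nobreakdash-filtered) diagram whose values lie in~$\cS$. Applying Corollary~\ref{lambdafiltered}(b) \emph{to the semilocalizing subcategory $\semiloc(A)$ rather than to~$\cC$}---which is legitimate precisely because the cardinal $\lambda$ does not depend on the subcategory---we obtain $\gamma\colim_I D\in\semiloc(A)$, since $\gamma D_i\in\semiloc(A)$ for all~$i$. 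Thus $\cC\subseteq\semiloc(A)$, so $\cC=\semiloc(A)$ is singly generated, and Theorem~\ref{singlygen} concludes the proof.

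The step I expect to be the main obstacle is this last maneuver: recognizing that the inclusion $\cC\subseteq\semiloc(A)$ must be obtained by applying the filtered-colimit closure of Corollary~\ref{lambdafiltered}(b) not to $\cC$ but to the smaller subcategory $\semiloc(A)$. This is available only because that corollary supplies a cardinal $\lambda$ that is \emph{uniform} across all semilocalizing subcategories of $\Ho(\cK)$; were $\lambda$ permitted to depend on the subcategory, the argument would collapse. The remaining points---that the generators furnished by accessibility have $\mu$\nobreakdash-filtered colimits computed as in~$\cK$ (guaranteed by $\mu\ge\lambda$), and the initial reduction to the simplicial case via Proposition~\ref{Dugger}---are routine.
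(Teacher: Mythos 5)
Your proposal is correct, and it rests on the same pillars as the paper's proof: the reduction to a simplicial model via Proposition~\ref{Dugger}, the uniform cardinal $\lambda$ of Corollary~\ref{lambdafiltered}, the Vop\v{e}nka-principle fact that $\gamma^{-1}(\cC)$ is accessible in the locally presentable category $\cK$, and Theorem~\ref{singlygen} to convert single generation into coreflectivity. Where you genuinely diverge is in how single generation is extracted from accessibility. The paper first writes $\cC$ as an ordinal-indexed ascending union $\cC=\bigcup_{i\in\Ord}\cC_i$ with $\cC_i=\semiloc(A_i)$, pulls this back to a chain $\cS_i=\gamma^{-1}(\cC_i)$ in $\cK$, observes that the set of accessibility generators must land in some $\cS_k$, and concludes by a stabilization argument that $\cS_k=\gamma^{-1}(\cC)$, hence $\cC=\cC_k$. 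You skip the chain entirely: you take the accessibility set $\cS$, form $A=\coprod_{S\in\cS}\gamma S$ (legitimate: $\Ho(\cK)$ is additive, so each $\gamma S$ is a retract of $A$, and semilocalizing subcategories are retract-closed --- this is the paper's own remark in Subsection~\ref{CTS} that ``generated by a set'' and ``singly generated'' coincide), and prove $\cC\subseteq\semiloc(A)$ by applying Corollary~\ref{lambdafiltered}(b) to $\semiloc(A)$ itself. Both arguments hinge on exactly the subtle point you single out: $\lambda$ depends only on $\cK$, not on the subcategory, so the filtered-colimit closure may be applied to a singly generated subcategory that was not given in advance (the paper applies it to $\semiloc(A_k)$ when showing $\cS_k$ is closed under $\mu$-filtered colimits; you apply it to $\semiloc(A)$). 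What your route buys is directness: you avoid the ordinal-indexed enumeration of the proper class $\cC$ and the stabilization step, producing the generator explicitly from the accessibility data. What the paper's chain-stabilization formulation buys is continuity with the arguments of \cite{CCh} and \cite{Cho} that it generalizes; mathematically, nothing in the theorem requires that extra structure, and your version is a clean simplification.
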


\begin{proof}
First replace $\cK$ with a Quillen equivalent stable simplicial combinatorial
model category, which is possible according to Proposition~\ref{Dugger}.
Let $\cC$ be a semilocalizing subcategory of $\cT=\Ho(\cK)$.
Write it as the union of an ascending chain of full subcategories
\[ \cC=\bigcup_{i\in\Ord} \cC_i, \]
indexed by the ordinals,
where for each $i$ there is an object $A_i\in\cC$ such that $\cC_i=\semiloc(A_i)$.
Then, by Theorem~\ref{singlygen}, each $\cC_i$ is coreflective.

Consider the corresponding classes $\cS_i=\gamma^{-1}(\cC_i)$,
where $\gamma\colon\cK\to\Ho(\cK)$ is the canonical functor.
These form an ascending chain of full subcategories of~$\cK$.
Let $\cS=\cup_{i\in\Ord}\, \cS_i=\gamma^{-1}(\cC)$. By Corollary~\ref{lambdafiltered}, 
there is a regular cardinal $\lambda$ such that
each $\cS_i$ is closed under $\lambda$-filtered colimits, and so is~$\cS$.

Since $\cK$ is locally presentable, 
Vop\v{e}nka's principle implies that $\mathcal S$ is accessible \cite[Theorem~6.6 and Corollary~6.18]{AR}.
Hence, there is a regular cardinal~$\mu$, which we may choose bigger than~$\lambda$, 
and a set $\mathcal X$ of $\mu$\nobreakdash-presentable objects 
in $\mathcal S$ such that every object of $\mathcal S$ is a $\mu$\nobreakdash-filtered
colimit of objects from~$\mathcal X$.

Since $\mathcal X$ is a set, we have ${\mathcal X}\subseteq \cS_k$ for some ordinal~$k$.
Hence, every object of $\mathcal S$ is a $\mu$\nobreakdash-filter\-ed colimit of objects from~$\cS_k$.
But the class $\cS_k$ is closed under $\mu$\nobreakdash-filtered colimits, since every $\mu$\nobreakdash-filtered colimit
is also $\lambda$\nobreakdash-filtered. Therefore, $\cS_k=\cS$,
that is, the chain $\{\cS_i \mid i\in\Ord\}$ eventually stabilizes.
Then $\{ \cC_i\mid i\in\Ord\}$ also stabilizes, since $\cC_i = \gamma(\cS_i)$ for all~$i$. 
This proves that $\cC=\cC_k$ for some~$k$, which is singly generated and coreflective.
\end{proof}

Under the assumptions of Theorem~\ref{mainthm2}, every localizing subcategory $\cC$
is also singly generated, since we may infer from Theorem~\ref{mainthm2} that 
$\cC=\semiloc(A)$ for some object~$A$, and then $\cC=\loc(A)$ as well.

It also follows that, under the assumptions of Theorem~\ref{mainthm2}, all semilocalizing subcategories
(and all localizing subcategories) are closed.
If we assume, in addition, that $\cT$ is tensor triangulated,
and apply Theorem~\ref{mainthm2} to a localizing ideal, then the corresponding
coreflection $C$ is a colocalization in the sense of~\cite{HPS}; that is, if $X$ is such that $CX=0$,
then $C(F(E,X))=0$ for every object $E$ in~$\cT$.

Hence, the question asked after \cite[Lemma~3.6.4]{HPS} of whether all localizing ideals are closed
has an affirmative answer in tensor triangulated
categories with combinatorial models, assuming Vop\v{e}nka's principle.

\section{Nullity classes and cohomological Bousfield classes}
\label{BC}

It follows from Theorem~\ref{ref-coref}, Theorem~\ref{mainthm1} and Theorem~\ref{mainthm2}
that, if Vop\v{e}nka's principle holds, then
in every triangulated category $\cT$ with combinatorial models there is a
bijective correspondence between localizing subcategories and
colocalizing subcategories. This answers affirmatively \cite[Problem~7.3]{N2}
under the assumptions made here.

In fact, under the same assumptions, there is also a bijective correspondence between
semilocalizing subcategories and semicolocalizing subcategories. Hence, we have:

\begin{coro}
Under Vop\v{e}nka's principle, every semilocalizing subcategory of a triangulated category
with combinatorial models is part of a $t$\nobreakdash-structure, 
and the same happens for every semicolocalizing subcategory.
\end{coro}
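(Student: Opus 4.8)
The plan is to read this corollary off the four-way bijection of Theorem~\ref{superbijection}, which already identifies $t$-structures with coreflective semilocalizing subcategories and, equivalently, with reflective semicolocalizing subcategories. Once a given subcategory is known to be coreflective (resp.\ reflective), it occupies one of the relevant slots of that bijection and is therefore the aisle (resp.\ co-aisle) of a uniquely determined $t$-structure. Note that the bijection of Theorem~\ref{superbijection} is unconditional; Vop\v{e}nka's principle enters only to supply the missing coreflectivity or reflectivity, and this is exactly what the two main theorems of the previous sections provide.

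Write $\cT=\Ho(\cK)$ with $\cK$ a stable combinatorial model category. For the semilocalizing case, let $\cC\subseteq\cT$ be semilocalizing. First I would invoke Theorem~\ref{mainthm2}: under Vop\v{e}nka's principle, $\cC$ is singly generated and, in particular, coreflective. Hence $\cC$ lies in class~(ii) of Theorem~\ref{superbijection}; concretely, $(\cC,\cC^{\,\rsemiperp})$ is a torsion pair by Proposition~\ref{coreflecting}, and the bijection with $t$-structures then exhibits $\cC$ as the aisle of a $t$-structure.

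For the semicolocalizing case, let $\cL\subseteq\cT$ be semicolocalizing, so that $\cL$ is in particular closed under products and fibres. Since $\cK$ is locally presentable and stable, Theorem~\ref{mainthm1} applies and, under Vop\v{e}nka's principle, $\cL$ is reflective; moreover the reflection is semiexact because $\cL$ is semicolocalizing. Thus $\cL$ lies in class~(i) of Theorem~\ref{superbijection}, and under the pairing~(i)$\leftrightarrow$(ii) furnished by Theorem~\ref{ref-coref} it is matched with the coreflective semilocalizing subcategory ${}^{\lsemiperp}\cL$, using part~(iii) of that theorem. The torsion pair $({}^{\lsemiperp}\cL,\cL)$ then yields a $t$-structure having $\cL$ as its co-aisle.

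I do not expect a genuine obstacle here: all the difficulty is concentrated in Theorem~\ref{mainthm1} and Theorem~\ref{mainthm2}, and this corollary is merely the assembly step that packages them through the torsion-pair formalism. The only care needed is bookkeeping --- checking that a combinatorial model supplies the hypotheses of both theorems (local presentability and stability for the reflective half, the full combinatorial strength for the coreflective half), and recalling that a reflection onto a semicolocalizing subcategory is semiexact by definition, so that the correspondence underlying Theorem~\ref{ref-coref} and Theorem~\ref{superbijection} applies symmetrically in both cases.
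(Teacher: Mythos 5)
Your proposal is correct and follows essentially the same route as the paper's own proof: invoke Theorem~\ref{mainthm2} for coreflectivity of semilocalizing subcategories, Theorem~\ref{mainthm1} for reflectivity of semicolocalizing ones (noting a combinatorial model category is in particular locally presentable), and then read off the $t$\nobreakdash-structure from the correspondence in Theorem~\ref{superbijection}. The extra bookkeeping you supply (the torsion pair $(\cC,\cC^{\,\rsemiperp})$ via Proposition~\ref{coreflecting}, and the pairing $({}^{\lsemiperp}\cL,\cL)$ via Theorem~\ref{ref-coref}) is exactly what the paper's appeal to Theorem~\ref{superbijection} packages implicitly.
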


\begin{proof}
As stated in~Theorem~\ref{superbijection}, every reflective semicolocalizing subcategory
yields a $t$\nobreakdash-structure, and so does every coreflective semilocalizing subcategory.
Theorem~\ref{mainthm1} ensures reflectivity of all semicolocalizing subcategories
and Theorem~\ref{mainthm2} ensures coreflectivity of all semilocalizing subcategories,
under the assumptions made.
\end{proof}

Another consequence of our results is the following.

\begin{theo}
\label{perpperp}
Let $\cT$ be a triangulated category with combinatorial models.
Assuming Vop\v{e}nka's principle,
every semicolocalizing subcategory of $\cT$ 
is equal to $E^{\,\rsemiperp}$ for some object~$E$ and every colocalizing subcategory 
is equal to $E^{\boldperp}$ for some~$E$.
\end{theo}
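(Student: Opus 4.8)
The plan is to deduce both assertions directly from the machinery already assembled: the reflectivity of semicolocalizing subcategories (Theorem~\ref{mainthm1}), the single generation of semilocalizing subcategories under Vop\v{e}nka's principle (Theorem~\ref{mainthm2}), and the orthogonality dictionary supplied by the reflection/coreflection bijection of Theorem~\ref{ref-coref} together with the closure identity~\eqref{closure}. The guiding idea is that a semicolocalizing subcategory $\cL$ is recovered as the right semiorthogonal of the semilocalizing subcategory $\cC$ of its associated acyclics, and that $\cC$ is singly generated; the generator of $\cC$ then serves as the object~$E$ we seek.

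Concretely, for the first assertion I would take $\cL$ semicolocalizing. Being closed under products and fibres, $\cL$ admits a semiexact reflection $L$ by Theorem~\ref{mainthm1}. By Theorem~\ref{ref-coref} this $L$ is paired with a semiexact coreflection $C$ whose colocal class $\cC={}^{\lsemiperp}\cL$ is semilocalizing and satisfies $\cL=\cC^{\,\rsemiperp}$. Applying Theorem~\ref{mainthm2} to $\cC$ yields, under Vop\v{e}nka's principle, an object $E$ with $\cC=\semiloc(E)$. Assembling these, $\cL=\cC^{\,\rsemiperp}=\semiloc(E)^{\rsemiperp}=E^{\,\rsemiperp}$, where the last equality is the instance of~\eqref{closure} for the one-element class $\{E\}$. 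This is precisely the required form.

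For the second assertion I would rerun the argument in the exact setting. A colocalizing $\cL$ is in particular semicolocalizing, hence reflective by Theorem~\ref{mainthm1}, and the reflection is now \emph{exact}. Part~(iv) of Theorem~\ref{ref-coref} then gives $\cC={}^{\boldperp}\cL$ localizing with $\cL=\cC^{\boldperp}$, and the localizing form of Theorem~\ref{mainthm2} makes $\cC$ singly generated, $\cC=\loc(E)$. The orthogonal analogue of~\eqref{closure} noted in the same remark, namely $E^{\boldperp}=\loc(E)^{\boldperp}$, yields $\cL=E^{\boldperp}$. Since all the genuine content is absorbed into the earlier theorems, I expect no real obstacle in this proof; the only points demanding care are invoking the correct closure identity---semiorthogonal in the first case, orthogonal in the second---and confirming that the generator of the semilocalizing (resp.\ localizing) class of acyclics reproduces $\cL$ exactly under $(-)^{\,\rsemiperp}$ (resp.\ $(-)^{\boldperp}$) rather than merely up to closure.
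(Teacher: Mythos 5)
Your proposal is correct and follows essentially the same route as the paper: reflectivity from Theorem~\ref{mainthm1}, closedness of $\cL$ via Theorem~\ref{ref-coref} (the paper routes this through Proposition~\ref{cid}, which rests on the same result), single generation of ${}^{\lsemiperp}\cL$ (resp.\ ${}^{\boldperp}\cL$) from Theorem~\ref{mainthm2}, and the closure identity~\eqref{closure} to conclude $\cL=E^{\,\rsemiperp}$ (resp.\ $\cL=E^{\boldperp}$). The exact case, which the paper dispatches with ``we argue in the same way,'' is handled correctly in your more detailed version, including the observation that the localizing class of acyclics is singly generated as a localizing subcategory.
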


\begin{proof}
Let $\cL$ be a semicolocalizing subcategory of~$\cT$. 
Theorem~\ref{mainthm1} ensures that $\cL$ is reflective and
hence $\cL=({}^{\lsemiperp}\cL)^{\rsemiperp}$,
by Proposition~\ref{cid}.
Now consider ${}^{\lsemiperp}\cL$, which is a semilocalizing subcategory,
hence singly generated by Theorem~\ref{mainthm2}. That is, ${}^{\lsemiperp}\cL=\semiloc(E)$ for some~$E$.
Consequently, $\cL=({}^{\lsemiperp}\cL)^{\rsemiperp}=\semiloc(E)^{\rsemiperp}= E^{\,\rsemiperp}$
by~(\ref{closure}),
which proves our first claim. We argue in the same way for a colocalizing subcategory.
\end{proof}

Semicolocalizing subcategories of the form $E^{\,\rsemiperp}$ for some object $E$ are called \emph{nullity classes},
since $E^{\,\rsemiperp}$ consists of objects $X$ that are \emph{$E$\nobreakdash-null}, in the sense that
$\cT(\Sigma^k E,X)=0$ for $k\ge 0$ (this terminology is consistent with \cite{Cha} or~\cite{DF}, but slightly
differs from that used in~\cite{Sta}). Thus, the following corollary
is a rewording of Theorem~\ref{perpperp}.

\begin{coro}
Assuming Vop\v{e}nka's principle, every semicolocalizing subcategory of a 
triangulated category with combinatorial models is a nullity class.
\label{coho}
\end{coro}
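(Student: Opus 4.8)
The plan is to observe that this corollary is simply a restatement of Theorem~\ref{perpperp} in the language of nullity classes, so the proof requires essentially no new work. Recall the definition just introduced: a \emph{nullity class} is a semicolocalizing subcategory of the form $E^{\,\rsemiperp}$ for some object~$E$. Theorem~\ref{perpperp} asserts precisely that, under Vop\v{e}nka's principle, every semicolocalizing subcategory of a triangulated category with combinatorial models equals $E^{\,\rsemiperp}$ for some object~$E$. Putting these two facts side by side gives the conclusion immediately.

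For completeness, I would recall the chain of implications that makes Theorem~\ref{perpperp} work, specialized to the case at hand. Let $\cL$ be a semicolocalizing subcategory of~$\cT$. First, Theorem~\ref{mainthm1} guarantees that $\cL$ is reflective, so Proposition~\ref{cid}(i) yields $\cL=({}^{\lsemiperp}\cL)^{\rsemiperp}$. Second, the class ${}^{\lsemiperp}\cL$ is semilocalizing, hence by Theorem~\ref{mainthm2} it is singly generated; write ${}^{\lsemiperp}\cL=\semiloc(E)$ for some object~$E$. Finally, the identity~\eqref{closure} gives $\cL=\semiloc(E)^{\rsemiperp}=E^{\,\rsemiperp}$, which exhibits $\cL$ as a nullity class.

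No genuine obstacle arises at this level: all of the substantive content is already contained in Theorems~\ref{mainthm1}, \ref{mainthm2}, and~\ref{perpperp}, which in turn rest on the large-cardinal input (via accessibility of subcategories closed under $\lambda$\nobreakdash-filtered colimits) and on the single-generation argument behind Theorem~\ref{singlygen}. The only point to verify here is that the defining condition of a nullity class coincides verbatim with the conclusion of Theorem~\ref{perpperp}, which it does.
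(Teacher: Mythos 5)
Your proposal is correct and matches the paper exactly: the paper itself introduces nullity classes right after Theorem~\ref{perpperp} and states that Corollary~\ref{coho} is simply a rewording of that theorem. Your recapitulated chain (reflectivity from Theorem~\ref{mainthm1}, closedness $\cL=({}^{\lsemiperp}\cL)^{\rsemiperp}$ via Proposition~\ref{cid}, single generation of ${}^{\lsemiperp}\cL$ from Theorem~\ref{mainthm2}, and the identity~\eqref{closure}) is precisely the paper's proof of Theorem~\ref{perpperp}.
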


It was shown in \cite{Sta} that there is a proper class of distinct nullity classes
$E^{\,\rsemiperp}$ in the derived category of $\Z$ or in the homotopy category of spectra. 
However, it is unknown if there is a proper class or only a set of distinct classes of the form~$E^{\boldperp}$.

The same problem is open for classes of the form ${}^{\boldperp}E$.
A localizing subcategory
of the form ${}^{\boldperp}E$ for some object $E$ is called a \emph{cohomological Bousfield class}; cf.~\cite{Ho2}.
It follows from
Corollary~\ref{closed_implies_coreflective} that cohomological Bousfield classes of
spectra are coreflective under Vop\v{e}nka's principle ---this was first proved in \cite{CCh}, \cite{CSS}.
However, we do not know if every localizing subcategory of spectra is a cohomological Bousfield class.
Indeed, we could not prove that colocalizing subcategories
are singly generated, not even under Vop\v{e}nka's principle and in the presence of
combinatorial models. As we next explain, there seems to be a reason for this.

\subsection{Torsion theories in abelian categories}
In an abelian category, the analogue of a semilocalizing subcategory
is a full subcategory closed under colimits and extensions (this is usually called
a \emph{torsion class}), and the analogue of a
semicolocalizing subcategory is a full subcategory closed under limits and
extensions (called a \emph{torsion-free class}).
In well-powered abelian categories, 
torsion classes are coreflective and torsion-free classes are reflective;
see~\cite{Di}. 

A torsion class closed under subgroups is called \emph{hereditary}.
These correspond to the localizing subcategories.
Hereditary torsion classes of modules over a ring are singly generated
and their orthogonal torsion-free classes are also singly generated; see~\cite{DH2}.
In the non-hereditary case, the situation is more intriguing. 
On one hand, under Vop\v{e}nka's principle, every torsion
class of abelian groups is singly generated. This was shown in \cite{DH1} and~\cite{GS}
(we note that the proof of Theorem~\ref{mainthm2} can easily be adapted so as to hold
for abelian groups, thus yielding another proof of this fact).
On the other hand, there are torsion-free classes that are not singly generated
in~ZFC; for example, the class of abelian groups whose countable subgroups are free
---see~\cite[Theorem~5.4]{DG}. 
This casts doubt on the fact that, in reasonably restricted triangulated
categories, colocalizing or semicolocalizing subcategories are necessarily singly generated, 
even under large-cardinal assumptions.

\end{document}